\setlist[enumerate]{%
  leftmargin=*,%
  label=(\roman*)} % apparently these have to go after the label
\setlist[itemize]{leftmargin=*}
\newtheorem{theorem}{Theorem}[section]
\newtheorem{lemma}[theorem]{Lemma}
\newtheorem{proposition}[theorem]{Proposition}
\newtheorem{corollary}[theorem]{Corollary}
\newtheorem{question}[theorem]{Question}
\theoremstyle{definition}%
\newtheorem{example}[theorem]{Example}
\newtheorem{remark}[theorem]{Remark}
\DeclareMathOperator{\hilb}{Hilb}% notation for the Hilbert scheme
\DeclareMathOperator{\Kos}{Kos}% Koszul relations
\DeclareMathOperator{\Hom}{Hom}% 
\DeclareMathOperator{\Soc}{Soc}%
\DeclareMathOperator{\bideg}{bideg}%
\DeclareMathOperator{\Ann}{Ann}%
\newcommand{\fmm}{\mathfrak{m}}% the K-polynomial
\newcommand{\cB}{\mathcal{B}}% 
\newcommand{\cF}{\mathcal{F}}% 
\newcommand{\bx}{\bm{x}}% bold italic
\newcommand{\bu}{\bm{u}}% bold italic
\newcommand{\kk}{\mathbbm{k}}% the ground field
\newcommand{\NN}{\mathbb{N}}% the nonnegative integers
\newcommand{\ZZ}{\mathbb{Z}}% the integers
\newcommand{\bbA}{\mathbb{A}}% affine space
\newcommand{\llrr}[1]{ \langle #1 \rangle } % short for \langle \rangle
\newif\ifhascomments \hascommentstrue
  \newcommand{\matt}[1]{{\color{red}[[\ensuremath{\spadesuit\spadesuit\spadesuit} #1]]}}
  \newcommand{\andrew}[1]{{\color{blue}[[\ensuremath{\clubsuit\clubsuit\clubsuit} #1]]}}
  \newcommand{\andrew}[1]{}
  \newcommand{\matt}[1]{}
\begin{document}

% --------------------------------------------------------------------
% Abstract
% --------------------------------------------------------------------
\begin{abstract}
  %% We answer an open problem posed by Iarrobino in the '80s: is there
  %% an elementary component of the Hilbert scheme of points
  %% $\textrm{Hilb}^d(\mathbb{A}^n)$ with dimension less than
  %% $(n-1)(d-1)$?  We construct an infinite class of such components in
  %% $\textrm{Hilb}^d(\mathbb{A}^4)$. Our techniques also allow us to
  %% construct an explicit example of a local Artinian ring with trivial
  %% negative tangents, vanishing nonnegative obstruction space, and
  %% socle-dimension $2$.
  We answer an open problem posed by Iarrobino in the '80s: is there a
  component of the punctual Hilbert scheme
  $\hilb^d(\mathcal{O}_{\mathbb{A}^n,p})$ with dimension less than
  $(n-1)(d-1)$?  We construct an infinite class of elementary
  components in $\textrm{Hilb}^d(\mathbb{A}^4)$ producing such
  examples. Our techniques also allow us to construct an explicit
  example of a local Artinian ring with trivial negative tangents,
  vanishing nonnegative obstruction space, and socle-dimension $2$.
\end{abstract}
% --------------------------------------------------------------------

% --------------------------------------------------------------------
% Title Page
% --------------------------------------------------------------------
\title{Small elementary components of Hilbert schemes of points}

\author{Matthew~Satriano}
\thanks{MS was partially supported by a Discovery Grant from the
  National Science and Engineering Research Council of Canada.}
\address{Matthew Satriano, Department of Pure Mathematics, University
  of Waterloo}
\email{msatrian@uwaterloo.ca}

\author{Andrew~P.~Staal}
\address{Andrew~P.~Staal, Department of Mathematics, University of
  Waterloo}
\email{andrew.staal@uwaterloo.ca}
%% \email{andrew.staal@uwaterloo.ca}
%% \address{ Department of Pure Mathematics \\
%%   University of Waterloo \\
%%   200 University Avenue West \\
%%   Waterloo, Ontario, Canada, N2L 3G1 }%
%% \date{}
%% \keywords{standard-graded Hilbert scheme, multigraded, singular,
%%   lexicographic point, lex-segment ideal, irreducible components}
\maketitle
% --------------------------------------------------------------------

% --------------------------------------------------------------------
% Section 0 -- Introduction
% --------------------------------------------------------------------
\section{Introduction}

%\cite{Evain--2004}

Hilbert schemes of points are moduli spaces of fundamental importance
in algebraic geometry, commutative algebra, and algebraic
combinatorics. Since their construction by Grothendieck
\cite{Grothendieck--1995}, they have seen broad-ranging applications
from the McKay correspondence
\cite{Ito-Nakajima--2000,Bridgeland-King-Reid--2001} to Haiman's proof
of the Macdonald positivity conjecture \cite{Haiman--2001}. In 1968,
Fogarty \cite{Fogarty--1968} proved the irreducibility of the Hilbert
scheme of points on a smooth surface. A few years later, Iarrobino
\cite{Iarrobino--1972, Iarrobino--1973} and Iarrobino--Emsalem
\cite{Iarrobino--Emsalem--1978} showed that, in contrast, for $n\geq3$
and $d$ sufficiently large, the Hilbert scheme of points
$\hilb^d(\bbA^n)$ is reducible. Since then, it has remained a
notoriously difficult problem to describe the
%number and geometric
structure of the irreducible components of $\hilb^d(\bbA^n)$.

Only a handful of explicit constructions of irreducible components
exist in the literature, many of these constructions involving clever
new insights \cite{Iarrobino--1984, Shafarevich--1990,
  Iarrobino--Kanev--1999, CEVV--2009, Erman--Velasco--2010,
  Huibregtse--2017, Jelisiejew--2019, Jelisiejew--2020,
  Huibregtse--2021}.  Even less is known about \emph{elementary
components}, namely, irreducible components parametrizing subschemes
supported at a point.  The study of all irreducible components may be
reduced to that of elementary ones due to the fact that, generically,
every component is \'etale-locally the product of elementary ones.
%To date, nearly all constructions of elementary components have
%resulted in components of dimensions \emph{larger} than that of the
Nearly all elementary components constructed thus far have dimensions
\emph{larger} than that of the main component of $\hilb^d(\bbA^n)$,
namely $nd$. The only elementary components in the literature with
dimensions shown to be less than $nd$ are examples with Hilbert
functions $(1, 4, 3)$ and $(1, 6, 6, 1)$ due to Iarrobino--Emsalem
\cite{Iarrobino--Emsalem--1978}, $(1, 5, 3)$ and $(1, 5, 4)$ due to
Shafarevich \cite{Shafarevich--1990}, $(1, 5, 3, 4)$, $(1, 5, 3, 4, 5,
6)$, and $(1, 5, 5, 7)$ due to Huibregtse
\cite{Huibregtse--2017,Huibregtse--2021}, $(1,4,10,16,17,8)$ due to
Jelisiejew \cite{Jelisiejew--2019}, and finally, one infinite family
also constructed by Jelisiejew \cite[Theorem 1.4]{Jelisiejew--2019}.

In the case of the punctual Hilbert scheme
$\hilb^d(\mathcal{O}_{\bbA^n,p})$ at a point $p$, there is a sharp
lower bound on the dimensions of its smoothable
components. Specifically, the smoothable locus $U$ of
$\hilb^d(\bbA^n)$ determines a smoothable locus $U_p = U \cap
\hilb^d(\mathcal{O}_{\bbA^n,p})$ in the punctual Hilbert scheme. Here,
$U_p$ can be reducible unlike the case of $\hilb^d(\bbA^n)$. Gaffney
proved \cite[Theorem 3.5]{Gaffney--1988} that all irreducible
components of $U_p$ have dimension at least $(n-1)(d-1)$. Moreover, Iarrobino 
identified an irreducible component realizing this lower bound,
consisting of the curvilinear points. It has remained an open problem
for over 30 years to determine whether Gaffney's bound extends to all
irreducible components of $\hilb^d(\mathcal{O}_{\bbA^n,p})$:

%\begin{question}[{\cite[p.\ 310]{Iarrobino--1987}, cf.~\cite[Problem 1.22]{Joachim-thesis}}]\footnote{An earlier version of this question can be found as well in \cite[p.\ 186]{Iarrobino--Emsalem--1978}}  \label{q:very-small-dim}  Consider the scheme $\mathcal{Z}\subset \hilb^d(\bbA^n)$  parametrizing subschemes supported at the origin. Is there a  component of $\mathcal{Z}$ of dimension less than $(n-1)(d-1)$?\end{question}

\begin{question}[{\cite[p.\ 310]{Iarrobino--1987}, cf.~\cite[p.\ 186]{Iarrobino--Emsalem--1978}}]  \label{q:very-small-dim}  Let $p\in\bbA^n$ be a point. Does there exist an irreducible component of $\hilb^d(\mathcal{O}_{\bbA^n,p})$ of dimension less than $(n-1)(d-1)$?\end{question}

The goal of this paper is to answer Question
\ref{q:very-small-dim}. By generalizing the original example presented
in \cite{Iarrobino--Emsalem--1978}, and making key use of Jelisiejew's
criterion \cite{Jelisiejew--2019}, we produce an infinite family of
elementary components in $\hilb^d(\bbA^4)$ with dimension less than
$3(d-1)$. Furthermore, the examples we produce are flexible in the
sense that our elementary components $Z\subset\hilb^d(\bbA^4)$ also
frequently yield new components $Z_i\subset\hilb^{d-i}(\bbA^n)$ for
small $i$; see Theorem \ref{thm:intro2}.
  
We work throughout over an algebraically closed field $\kk$ of
characteristic $0$. 

\begin{theorem}
  \label{thm:main-result}
  Let
  \[
  d=\frac{1}{2}ab(a+b)
  \]
  with $a,b\in\mathbb{Z}$ and $2\leq a\leq b$. 
  If $(a,b)\neq (2,2)$,
  %If $(a,b)\notin\{(2,2),(2,3),(2,4)\}$,
  then $\hilb^d(\mathcal{O}_{\bbA^4,p})$ contains an irreducible
  component of dimension less than $3(d-1)$.
\end{theorem}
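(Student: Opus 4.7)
The plan is to generalize Iarrobino--Emsalem's original construction of the $(1,4,3)$ elementary component---the case corresponding to $(a,b) = (2,2)$ under our parametrization---by producing, for each allowed pair $(a,b)$ with $2 \leq a \leq b$ and $(a,b) \neq (2,2)$, an explicit graded Artinian $\kk$-algebra $A_{a,b} = R/I_{a,b}$ of length $d = \frac{1}{2}ab(a+b)$, where $R = \kk[x_1, x_2, x_3, x_4]$. I would specify $I_{a,b}$ through a suitable Macaulay inverse system chosen so that $A_{a,b}$ carries a natural multigrading and has a tightly controlled, compressed Hilbert function and socle structure.

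The subscheme $Z_{a,b} \subset \bbA^4$ cut out by $I_{a,b}$ is supported at the origin, so it defines a point of $\hilb^d(\mathcal{O}_{\bbA^4,p})$. To exhibit $[Z_{a,b}]$ as a smooth point on an elementary component, I would invoke Jelisiejew's smoothness criterion \cite{Jelisiejew--2019}: provided the graded piece $\Hom_R(I_{a,b}, A_{a,b})_{<0}$ of the Hilbert-scheme tangent space vanishes and the obstruction space at $[Z_{a,b}]$ vanishes in nonnegative degrees, the point is smooth and lies on an elementary component of dimension $\dim_\kk \Hom_R(I_{a,b}, A_{a,b})_{\geq 0}$.

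The hard part will be verifying these two vanishing conditions uniformly in $(a,b)$. Both reduce to explicit graded linear-algebra calculations, but doing so for all pairs at once requires a uniform description of the minimal generators and low syzygies of $I_{a,b}$. The torus action should provide substantial bookkeeping help: each graded piece decomposes under the weights, reducing the task to a finite collection of weight-indexed checks. I expect this to be precisely the step where the specific combinatorial structure of the construction pays off, and where the analogy with the original $(1,4,3)$ example must be carefully adapted rather than literally repeated.

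Once the criterion is verified, one computes $\dim_\kk \Hom_R(I_{a,b}, A_{a,b})_{\geq 0}$ explicitly as a polynomial in $a$ and $b$; the corresponding component of $\hilb^d(\mathcal{O}_{\bbA^4,p})$ has dimension $4$ less, accounting for translations of the support point. The final step is then a direct inequality check: verify this dimension is strictly less than $3(d-1) = \frac{3}{2}ab(a+b) - 3$ for every pair with $2 \leq a \leq b$ and $(a,b) \neq (2,2)$. The exclusion of $(2,2)$ should be exactly Iarrobino--Emsalem's original example, whose elementary component meets Gaffney's bound with equality rather than strict inequality.
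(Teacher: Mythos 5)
Your overall strategy matches the paper's in outline: generalize Iarrobino--Emsalem to a family of local Artinian quotients of $\kk[x_1,\dots,x_4]$ of length $d = \frac{1}{2}ab(a+b)$, apply Jelisiejew's criterion (trivial negative tangents plus vanishing nonnegative obstructions) to get a smooth point on an elementary component of $\hilb^d(\bbA^4)$, subtract $4$ for translations to pass to $\hilb^d(\mathcal{O}_{\bbA^4,p})$, and then check the dimension inequality. Your observation that $(2,2)$ is exactly the Iarrobino--Emsalem example meeting Gaffney's bound with equality is correct (that component has dimension $25$ in $\hilb^8(\bbA^4)$, so $21 = 3(d-1)$ after subtracting $4$).

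However, there are two genuine gaps. First, the construction is not specified, and the framing you propose would likely lead you astray: the paper's algebras $S/(\llrr{x,y}^{n_1} + \llrr{z,w}^{n_2} + \llrr{xz-yw})$ are \emph{not} compressed (e.g.\ for $n_1=n_2=3$ the Hilbert function is $(1,4,9,8,5)$, while a compressed algebra with socle-vector $(0,0,0,0,5)$ would have $10$ in degree $2$), they are not Gorenstein (socle dimension is $n_1+n_2-1$), and they are defined directly by generators rather than through an inverse system. Without the right family, the technical core of the paper---Propositions 3.1 and 4.2 verifying trivial negative tangents and $T^2_{\geq 0}=0$ uniformly via lemmas like Corollary 3.4 on syzygy decompositions---cannot even begin, and you explicitly defer that hard part. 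Second, the step ``subtract $4$ for translations'' is not automatic: passing from the dimension of the elementary component of $\hilb^d(\bbA^4)$ to the dimension of a component of the punctual Hilbert scheme $\hilb^d(\mathcal{O}_{\bbA^4,p})$ requires the scheme-theoretic argument in the paper's proof of Theorem 1.2, where Jelisiejew's open immersion $\theta$ and a flatness argument over $\bbA^4$ are used to show the punctual locus near $[I]$ has dimension exactly $D-4$; a priori the punctual locus could intersect the component in something lower-dimensional or with extra components. This argument is also what makes the cases $(a,b) = (2,3)$ and $(2,4)$ (where $D \geq 3(d-1)$ but $D-4 < 3(d-1)$) work, a subtlety your proposal does not address.
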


Specifically, we prove Theorem \ref{thm:main-result} by showing:

%Let us describe our first class of examples.

\begin{theorem}
  \label{thm:intro}
  Every ideal in $S := \kk[x,y,z,w]$ of the form
  %% \begin{enumerate}
  %% \item $I := \llrr{x,y}^{n_1} + \llrr{z,w}^{n_2} + \llrr{xz-yw}$, for
  %%   $n_1, n_2 \ge 2$, or
  %% \item $J := \llrr{x,y}^{n_1} + \llrr{z,w}^{n_2} + \llrr{xz-yw,
  %%   %% x^{n_1-1}z^{n_2-1}
  %%   s}$, for $n_1,n_2 \ge 3$ and $s \in S$ of bidegree
  %%   $(n_1-1,n_2-1)$,\matt{this part is now subsumed by Theorem
  %%   \ref{thm:intro2} so we should get rid of this part.}
  %% \end{enumerate}
  \[
  I := \llrr{x,y}^{n_1} + \llrr{z,w}^{n_2} + \llrr{xz-yw}, 
  \]
  for $n_1, n_2 \ge 2$, determines a smooth point $[I]$ of the Hilbert
  scheme of points $\hilb^{d}(\bbA^4)$, where
  \[
  d = d(n_1,n_2) := \frac{n_1n_2(n_1+n_2)}{2}.
  \]
  Moreover, the unique component containing $[I]$ is elementary of dimension
  \[
  \frac{1}{3}m^3 + mM^2 + m^2 + 2mM + M^2 -
  \frac{1}{3}m -1,
  \]
  where $m=\min(n_1,n_2)$ and $M=\max(n_1,n_2)$. This dimension is
  less than $4d$, for all $n_1,n_2\geq2$, and less than $3(d-1)$, for
  $(m,M)\notin\{(2,2),(2,3),(2,4)\}$.
\end{theorem}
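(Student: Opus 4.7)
The plan is to exploit the bi-grading on $S$ in which $\deg x=\deg y=(1,0)$ and $\deg z=\deg w=(0,1)$. Under this bi-grading the generators $x^iy^{n_1-i}$, $z^jw^{n_2-j}$, and $xz-yw$ of $I$ are bi-homogeneous, so $S/I$ is a bi-graded Artinian local ring, supported at the origin; together with the vanishing of negative tangents shown below, this ensures the unique component containing $[I]$ is elementary. For the colength $d$, I would reduce any monomial $x^a y^b z^c w^e$ modulo the relation $xz \equiv yw$ to one with $a=0$ or $c=0$, producing the basis
\[
\{x^a y^b z^c w^e : a+b<n_1,\ c+e<n_2,\ a=0 \text{ or } c=0\};
\]
a direct count splits into the cases $a=0$ and $c=0, a\ge 1$, and yields $d = n_1 n_2 (n_1+n_2)/2$.

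The central step, and the main obstacle, is to show that $[I]$ is a smooth point. I would apply Jelisiejew's criterion from \cite{Jelisiejew--2019}: verify that the negatively-graded tangent space $\Hom_S(I, S/I)_{<0}$ vanishes and that the nonnegative part of the obstruction space (the bi-graded $T^2$) also vanishes. Since $I$ is bi-homogeneous, every homomorphism $I \to S/I$ decomposes into bi-graded components, and a component of bidegree $\beta$ sends a generator of bidegree $\alpha$ into $(S/I)_{\alpha+\beta}$. Negatively-graded maps are therefore severely constrained; the hard work is to exploit the Koszul-type syzygies among the generators $x^i y^{n_1-i}$, $z^j w^{n_2-j}$, $xz-yw$ to rule out any nonzero negative-degree map and, analogously, to kill the nonnegative obstructions. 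I expect this to be the most delicate bookkeeping in the proof, since the ideal's three families of generators interact through the binomial relation.

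With smoothness established, the component dimension equals $\dim_\kk \Hom_S(I, S/I)$. I would compute this by decomposing $\Hom_S(I, S/I)$ generator-by-generator, with targets expressed in the monomial basis from the first step and subject to the syzygies of $I$; summing the contributions from maps out of the $\langle x,y\rangle^{n_1}$-generators, the $\langle z,w\rangle^{n_2}$-generators, and $xz-yw$ should assemble into the polynomial $\tfrac{1}{3}m^3 + mM^2 + m^2 + 2mM + M^2 - \tfrac{1}{3}m -1$. Finally, the two stated inequalities reduce to polynomial inequalities in the range $2\le m \le M$: $\dim < 4d$ can be verified by direct expansion for all $m,M$, while $\dim < 3(d-1)$ requires a short case analysis separating the three exceptional pairs $(m,M)\in\{(2,2),(2,3),(2,4)\}$ from the remaining ones, where the cubic growth of $d$ dominates.
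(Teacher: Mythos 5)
Your overall strategy mirrors the paper's: compute $d$ from a monomial basis for $S/I$ obtained by reducing modulo $xz\equiv yw$, apply Jelisiejew's criterion to get smoothness on an elementary component, then count $\dim_\kk\Hom_S(I,S/I)$. However, you misstate the criterion in a way that would stop the argument cold. Jelisiejew's condition --- ``trivial negative tangents'' --- is $T^1(A/\kk,A)_{<0}=0$ for $A=S/I$, not $\Hom_S(I,S/I)_{<0}=0$. These differ by the trivial tangents: the long exact sequence in tangent cohomology for $\kk\to S\to A$ contains $A^4\to\Hom_S(I,A)\to T^1(A/\kk,A)\to 0$, and the first arrow sends basis vectors to the degree-$(-1)$ homomorphisms $f\mapsto\partial f/\partial x_i+I$. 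For any homogeneous $\fmm$-primary $I\subsetneq\fmm$ these cannot all vanish (in characteristic zero, repeated differentiation of a minimal-degree generator would otherwise put a nonzero constant in $I$), so $\Hom_S(I,S/I)_{-1}\neq 0$ always. Indeed for the ideals at hand $\Hom_S(I,S/I)_{-1}$ is exactly four-dimensional with basis $\partial_x,\partial_y,\partial_z,\partial_w$; the paper's Proposition~\ref{prop:Negative-tangents-for-I} (via Proposition~\ref{prop:tnt1}) establishes precisely this together with $\Hom_S(I,S/I)_{<-1}=0$, which is the correct thing to prove. If you ran your stated check literally, degree $-1$ would appear to fail and you would not know how to proceed.

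Beyond that slip, the proposal is a plan rather than a proof: it defers the entire technical content --- the syzygy analysis behind trivial negative tangents (Lemma~\ref{l:mindingPsAndQs} and Corollary~\ref{cor:mindingPsAndQsInGeneral} in \S\ref{sec:tnt1}), the nonnegative obstruction-space computation (Proposition~\ref{prop:obs1} in \S\ref{sec:obs1}), and the explicit tangent-space dimension count (Proposition~\ref{prop:grid} and the subsequent corollaries in \S\ref{sec:dimOfComps}) --- to ``delicate bookkeeping'' you expect to assemble correctly. Those verifications are the proof of this theorem, and they are not carried out here.
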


%% The bidegree of a monomial $x^{u_1}y^{u_2}z^{u_3}w^{u_4} \in S$ is
%% defined here to be $(u_1+u_2, u_3+u_4) \in \NN^2$.  The ideals listed
%% in Theorem \ref{thm:intro} are all bigraded, as are their quotients.
%% The elements $s$ in Theorem \ref{thm:intro}(ii) define socle elements
%% $s +I \in \Soc S/I$.
%% %% , where $A := S/I$.
%% In favourable circumstances, the behaviour of the theorem iterates,
%% leading to the following.

%% \begin{theorem}
%%   \label{thm:intro2}
%%   Let $s_1, s_2, \dotsc, s_{r} \in S$, $I^{(0)} := I$, and $I^{(i)} :=
%%   I + \llrr{s_1, s_2, \dotsc, s_i}$, for $1 \le i \le r$, where $I$ is
%%   as in Theorem \ref{thm:intro}(i) with $n_1,n_2 \ge 3$.  Suppose
%%   that, for all $0 \le i \le r-1$,
%%   \begin{itemize}
%%   \item the ideal $I^{(i)}$ has trivial negative tangents and
%%     vanishing nonnegative obstruction space,
%%   \item the element $s_{i+1} + I^{(i)} \in \Soc A^{(i)}$ is nonzero,
%%     where $A^{(i)} := S/I^{(i)}$, and
%%   \item the socle satisfies $\Soc A^{(i+1)} = A^{(i+1)}_{(n_1-1,
%%     n_2-1)}$.
%%   \end{itemize}
%%   Then the ideal $I^{(r)}$ determines a smooth point of the Hilbert
%%   scheme $\hilb^{pts}(\bbA^4)$, belonging to a unique elementary
%%   component.  Moreover, these hypotheses always hold when $r=1$.
%% \end{theorem}

Furthermore, for any $I$ in Theorem \ref{thm:intro} and any
$s\in\Soc(S/I)$, we prove that $I+\llrr{s}$ also defines a smooth
point of $\hilb^{d-1}(\bbA^4)$ belonging to a unique elementary
component. We show that one may even iterate this construction to
produce smooth points $I+\llrr{s_1,s_2,\dots,s_r}$ on unique
elementary components provided that a particular constraint holds
which relates socles to bidegrees. Notice that the ideals in Theorem
\ref{thm:intro} are bigraded, where the bidegree of a monomial
$x^{u_1}y^{u_2}z^{u_3}w^{u_4} \in S$ is defined here to be $(u_1+u_2,
u_3+u_4) \in \NN^2$. Then we have:
  
\begin{theorem}
  \label{thm:intro2}
  Let $I$ be as in Theorem \ref{thm:intro} with $n_1,n_2 \ge 3$.
  %%  and let $A=S/I$.
  Let $s_1, s_2, \dotsc, s_{r} \in S$ define elements in $\Soc(S/I)$,
  and let
  \[
  J = I + \llrr{s_1,\dots,s_r}
  \]
  and $B=S/J$. If either
  \begin{enumerate}
  \item $r=1$, or
  \item $\Soc B = B_{(n_1-1, n_2-1)}$,
  \end{enumerate}
  then $[J]$ is a smooth point of $\hilb^{d-r}(\bbA^4)$, belonging to
  a unique elementary component.
\end{theorem}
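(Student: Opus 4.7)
The strategy is to apply Jelisiejew's criterion, as in the proof of Theorem \ref{thm:intro}, to conclude simultaneously that $[J]$ is a smooth point and that the unique irreducible component through $[J]$ is elementary. Since $I$ is bihomogeneous and the socle of the bigraded algebra $S/I$ is itself bigraded, each $s_i$ may be replaced by its bihomogeneous components lying in the socle. Thus we may assume $J$ is bihomogeneous, so $B$ inherits an $\NN^2$-grading; a suitable positive integral linear combination of the two coordinates produces a $\ZZ$-grading to which Jelisiejew's criterion applies.

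The criterion reduces the theorem to two vanishing statements on $B$: the negative-weight part of the tangent space $\Hom_S(J, B)$ is exhausted by translations, and the nonnegative-weight part of the obstruction space vanishes. Both are already established for the smooth elementary point $[I]$ by (the proof of) Theorem \ref{thm:intro}. I would transfer these to $[J]$ via the short exact sequence
\[
0 \to I/J \to B \to S/I \to 0,
\]
combined with the presentation $I \oplus S^r \twoheadrightarrow J$ sending the standard basis of $S^r$ to $s_1, \dots, s_r$. The associated long exact sequences for $\Hom_S(-, B)$ and $\Ext^1_S(-, B)$ express both the tangent and obstruction spaces of $[J]$ in terms of the corresponding spaces for $[I]$ plus bihomogeneous correction terms indexed by the bidegrees of the $s_i$ and by the socle of $B$.

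In case (i), only a single socle element is added, so each long exact sequence picks up exactly one correction term in a single bidegree, and the required vanishings reduce to inspecting a small collection of bihomogeneous pieces directly. In case (ii), the hypothesis $\Soc B = B_{(n_1-1, n_2-1)}$ forces all surviving socle of $B$ to be concentrated in the top bidegree of $S/I$. Choosing the $\ZZ$-grading so that $(n_1-1, n_2-1)$ has maximal positive weight, each new relation $s_i \in J$ sits in strictly positive weight, so its contribution to $\Hom_S(J, B)$ is nonnegative, while the socle concentration rules out the existence of compensating obstruction classes in nonnegative weight.

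The main obstacle is case (ii): controlling the nonnegative-weight part of the obstruction space when the $s_i$ may span several distinct bidegrees. The socle-concentration hypothesis is precisely the input needed to prevent new obstruction classes from appearing, but translating it into a vanishing statement at the level of $\Ext^1_S(J, B)$ requires careful bookkeeping of the bigradings and a judicious totalization into a single $\ZZ$-grading. Once these vanishings are established, Jelisiejew's criterion yields smoothness of $[J]$ together with the elementary nature of its component in one stroke, and the length count $\dim_\kk B = d - r$ is automatic.
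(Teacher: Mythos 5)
The overall strategy---apply Jelisiejew's criterion by establishing trivial negative tangents and vanishing nonnegative obstruction space for $B$, then transfer from the known case of $A = S/I$---matches the paper, but the implementation has several genuine gaps.

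First, the exact sequence is written backwards. Since $J \supset I$, the relevant sequence is $0 \to J/I \to A \to B \to 0$, not $0 \to I/J \to B \to S/I \to 0$; the latter does not exist as stated.

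Second, and more fundamentally, the obstruction space in Jelisiejew's criterion is the tangent-cohomology module $T^2(B/\kk, B)_{\ge 0}$, not $\Ext^1_S(J,B)_{\ge 0}$. The inclusion $T^2(B/\kk,B) \hookrightarrow \Ext^1_S(J,B)$ is in general strict, because $T^2$ is computed from $Q/\Kos$ rather than from $Q$; so proving $\Ext^1_{\ge 0} = 0$ would be a strictly stronger statement which need not even hold. Your proposed mechanism---a long exact sequence from the presentation $I \oplus S^r \twoheadrightarrow J$---moreover does not cleanly produce the $T^i$-modules, because it is a presentation of $S$-modules that does not see the Koszul relations. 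The paper instead invokes the long exact sequence of tangent cohomology associated to the pair of \emph{ring maps} $\kk \to A \to B$ (Hartshorne, Theorem 3.5), and exploits the fact that $A \to B$ is a surjection whose kernel $J/I$ is a socle ideal. This makes the cotangent complex $L_{B/A,\bullet}$ two-term with $T^1(B/A,B) \cong (\Soc B)(n_1+n_2-2)$, which is precisely where the socle-concentration hypothesis in (ii) is consumed. A second long exact sequence for the ring maps $A \to \kk \to B$ is also used in the obstruction computation. This is the structural idea of the argument, and it is absent from the proposal.

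Third, you speak of the $s_i$ possibly spanning ``several distinct bidegrees,'' but Lemma \ref{lem:socA} shows $\Soc A = A_{(n_1-1,n_2-1)}$ is concentrated in the single bidegree $(n_1-1,n_2-1)$; there is nothing to totalize, and the relevant grading is simply the standard total degree. Finally, the reduction in both cases (i) and (ii) to ``inspecting a small collection of bihomogeneous pieces'' and ``careful bookkeeping'' is precisely where the content lies: the lifting of maps $L_1 \to B$ to $L_1 \to A$, the analysis of socle and bidegree constraints in Lemma \ref{lem:LES-T1}, Lemma \ref{lem:socB}, and Proposition \ref{prop:obs2}, and the induction on $r$ in Corollaries \ref{cor:tnt2} and \ref{cor:obs2} relying on Lemma \ref{l:socle-r-condition->socle-i-condition}, are left unaddressed. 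As written, the proposal is a sketch of an approach whose key step uses the wrong cohomological invariant.
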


\begin{remark}
  \label{rmk:more-details-intro2}
  The proof of Theorem \ref{thm:intro2} shows that $J$ has
  \emph{trivial negative tangents}, i.e.\ that $T^1(B/\kk,B)_{<0} =
  0$, as well as \emph{vanishing nonnegative obstruction space},
  i.e.~$T^2(B/\kk,B)_{\ge0} = 0$. See \S\ref{sub:tcc} for the
  definitions of the $T^i$-modules.
\end{remark}

We end the introduction with a brief description of some further
applications of our techniques.  First, consider the following
folklore question, an affirmative answer to which would distinguish
cactus and secant varieties \cite[Proposition
  7.4]{Buczynski--Jelisiejew--2017} (see also
\cite{Buczynska--Buczynski--2014},
\cite{Galazka--Mandziuk--Rupniewski--2020}).

\begin{question}
  \label{q:cactus}
  Does there exist a Gorenstein local Artinian algebra of the form
  $\kk[x,y,z,w]/I$ with trivial negative tangents?
\end{question}

Theorem \ref{thm:intro2} and Remark \ref{rmk:more-details-intro2} show that
\begin{equation}\label{eqn:socle-2-ideal-example}
I = \llrr{x,y}^{3} + \llrr{z,w}^{3} + \llrr{xz-yw,x^2z^2,x^2w^2,y^2z^2}
\end{equation}
has trivial negative tangents and vanishing nonnegative obstruction
space; while $S/I$ is not Gorenstein (socle-dimension $1$), it does
have socle-dimension $2$. It is possible that variants of the ideals
considered in Theorem \ref{thm:intro2} may yield an answer to Question
\ref{q:cactus}. See Example \ref{eg:3} for further details and Remark
\ref{rmk:variants} for similar examples.

It is also interesting to note that our techniques yield examples of
Hilbert schemes with at least two elementary
components.\footnote{J. Jelisiejew (personal communication) has also
found examples for large values of $d$ by selecting tuples of random
polynomials; we are unaware of other such examples in the literature.}
Theorem \ref{thm:intro} shows, for instance, that the ideal
$\llrr{x,y}^2 + \llrr{z,w}^4 + \llrr{xz-yw}$ defines a smooth point of
an elementary component of $\hilb^{24}(\bbA^4)$. On the other hand,
Theorem \ref{thm:intro2} shows that the ideal in
\eqref{eqn:socle-2-ideal-example} also defines a smooth point on an
elementary component of $\hilb^{24}(\bbA^4)$.  An explicit check then
shows that the tangent space dimensions at these two points are
different. See Examples \ref{eg:2} and \ref{eg:2'}.

\subsection*{Acknowledgments}
It is a pleasure to thank Tony Iarrobino, Ritvik Ramkumar, and Tim Ryan for helpful email exchanges.  
We especially wish to thank Joachim Jelisiejew for pointing us to Question \ref{q:very-small-dim} and for 
his comments on an earlier draft. We are extremely grateful both to Joachim Jelisiejew and Jenna
Rajchgot for many enlightening conversations which led to this project.

%% Calculations in \emph{Macaulay2} were indispensable.
 
%%This research was supported by a Geometry and Topology postdoctoral
%% fellowship at the University of Waterloo.
% --------------------------------------------------------------------

% --------------------------------------------------------------------
% Section -- Useful Tools
% --------------------------------------------------------------------
\section{Preliminaries}

We set some notation and highlight a useful tool for studying Hilbert
schemes of points.

\subsection{Basic Set-up}
\label{sub:BF}

Let $S := \kk[x,y,z,w]$ be the coordinate ring of affine space
$\bbA^4$ where $\kk$ is an algebraically closed field of
characteristic $0$.
%% The ring $S$ has a standard grading.
For a vector $\bu = (u_1,u_2,u_3,u_4) \in \NN^4$, let $\bx^{\bu} :=
x^{u_1}y^{u_2}z^{u_3}w^{u_4}$ denote the corresponding monomial in $S$
and denote its degree in the standard grading by $|\bx^{\bu}| = |\bu|
:= u_1+u_2+u_3+u_4$; more generally, we use $|f|$ to denote the degree
of any homogeneous element $f$ in the standard grading.  This grading
can be refined to a bigrading on $S$, defined on monomials by
$\bideg(\bx^{\bu}) := (u_1+u_2,u_3+u_4) \in \NN^2$.  All of the ideals
$I$ and $J$ mentioned in Theorems \ref{thm:intro} and \ref{thm:intro2}
are bigraded, and thus, standard graded.  If $R$ is a $\ZZ$-graded
ring, $M$ is a graded $R$-module, and $j \in \ZZ$, then the
\emph{\bfseries $j$th twist} of $M$ is the graded $R$-module $M(j)$
satisfying $M(j)_i := M_{j+i}$, for all $i \in \ZZ$.

\subsection{The Truncated Cotangent Complex}
\label{sub:tcc}

Our approach to proving Theorems \ref{thm:intro} and \ref{thm:intro2}
requires computing certain
%% tangent cohomology
$T^i$-modules,
%% (i.e. $T^i$-functors),
so we review the construction of the truncated cotangent complex.  We
follow \cite[\S 3]{Hartshorne--2010} closely, which itself follows
\cite{Lichtenbaum--Schlessinger--1967}.

To obtain a model of the truncated cotangent complex of a ring
homomorphism $A \to B$, we choose surjections $R_{B/A}
\twoheadrightarrow B$, with kernel denoted $I$, and $F_{B/A}
\twoheadrightarrow I$, where $R_{B/A}$ is a polynomial ring over $A$
and $F_{B/A}$ is a free $R_{B/A}$-module.  We then set $Q_{B/A}$ to be
the kernel of $F_{B/A} \twoheadrightarrow I$ and $\Kos_{B/A}$ to be
its submodule of Koszul relations.  We drop the subscripts when no
confusion should arise.  The \emph{\bfseries truncated cotangent
complex} of $B$ over $A$ is the complex $L_{B/A,\bullet}$ concentrated
in homological degrees $0,1,2$, with terms
\[
L_{B/A,\bullet} \colon \Omega_{R/A} \otimes_{R} B
\stackrel{\ d_1^{B/A}}{\longleftarrow} F \otimes_{R} B
\stackrel{\ d_2^{B/A}}{\longleftarrow} Q/\Kos,
\]
where $d_2^{B/A}$ is induced by the inclusion $Q \subseteq F$ and
$d_1^{B/A}$ is obtained by composing the map $L_1 = F \otimes_{R} B
\twoheadrightarrow I/I^2$ with the map induced by the derivation $R
\to \Omega_{R/A}$.  We sometimes use $d_i^L$ to denote the
differentials. One derives from this the \emph{\bfseries
$T^i$-modules}
\[
T^i(B/A, M) := H^i(\Hom_B(L_{B/A,\bullet}, M)),
\]
for any $B$-module $M$ and $0 \le i \le 2$.  (We also call these
\emph{\bfseries tangent cohomology modules} when convenient.)
%% though this usage is not standard.)
The notation $T^i_{B/A}$ is often used when $M = B$, or simply
$T^i_{B}$, if $A = \kk$ is the base field.  When viewed as an element
of the derived category, the complex $L_{B/A,\bullet}$ is independent
of the choices of $R_{B/A}$ and $F_{B/A}$; see e.g.\ \cite[Remark
  3.3.1]{Hartshorne--2010}.  Hence, the tangent cohomology modules
depend only on the map $A \to B$.

\begin{remark}
  \label{rmk:graded}
  When $A$ and $B$ are both graded by an abelian group $G$ and the map
  $A \to B$ is a graded homomorphism, all choices in the construction
  of the truncated cotangent complex can be made to respect the
  grading.  If the cotangent modules $L_{B/A,i}$ are all finite over
  $B$ and $M$ is a graded $B$-module, then $T^i(B/A,M)$ is also
  graded.  Importantly, the nine-term long exact sequences described
  in \cite[Theorems 3.4--3.5]{Hartshorne--2010} also respect the
  grading.  This holds for our examples, where we only need $G = \ZZ$
  or $\ZZ^2$ and the gradings mentioned in \S\ref{sub:BF}.
\end{remark}

%% Following \cite{Jelisiejew--2019}, we say that an ideal $I \subset
%% S$ has \emph{\bfseries trivial negative tangents} when
%% $T^1(A/\kk,A)_{<0} = 0$, for $A := S/I$.  We also say that $I$ has
%% \emph{\bfseries vanishing nonnegative obstruction space} when
%% $T^2(A/\kk,A)_{\ge 0} = 0$.  (These criteria make sense for
%% nonhomogeneous ideals too, but we do not need the generalization.)

\subsection{A Comparison Theorem}
\label{sub:comparison}

We briefly describe a theorem of Jelisiejew.  Let $I$ be any ideal in
$S := \kk[x,y,z,w]$ defining a local Artinian quotient supported at $0
\in \bbA^4$.
%One can homogenize $I$ with respect to a variable $t^{-1}$ of degree
%$-1$ to obtain the graded ideal $I^h \subset S[t^{-1}]$, which
%defines a $\kk$-point of the multigraded Hilbert scheme
%$\hilb(S[t^{-1}])$.  The \emph{Bia{\l}ynicki-Birula decomposition}
%$\hilb^+_{pts}(\bbA^4)\subset \hilb(S[t^{-1}])$ is an open subscheme
Motivated by the Bia{\l}ynicki-Birula decomposition, Jelisiejew
defines a scheme $\hilb^+_{pts}(\bbA^4)$ and constructs a map
\[
\theta\colon\hilb^+_{pts}(\bbA^4) \times \bbA^4 \to \hilb^{pts}(\bbA^4)
\]
with the following properties.  First,
$\theta|_{\hilb^+_{pts}(\bbA^4)\times\{0\}}$ is a monomorphism and
maps $\kk$-points bijectively to subschemes of $\bbA^4$ supported at
$0$.  Second, on the level of $\kk$-points, if $[J]$ is supported at
$0$, then $\theta([J],v)$ is the point supported at $v$ obtained by
translating $[J]$.

\begin{theorem}[{\cite[Theorem 4.5]{Jelisiejew--2019}}]
  \label{thm:Jel}
  If $I$ is supported at the origin and has trivial negative tangents,
  then $\theta$ defines an open immersion of a local neighbourhood of
  $([I],0)$ into $\hilb^{pts}(\bbA^4)$.  In particular, if $S/I
  \not\simeq \kk$, then all components of $\hilb^{pts}(\bbA^4)$
  containing $[I]$ are elementary.
\end{theorem}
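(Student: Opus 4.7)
The plan is to use the Bia{\l}ynicki-Birula (BB) decomposition for the $\mathbb{G}_m$-action on $\hilb^{pts}(\bbA^4)$ induced by scaling the coordinates on $\bbA^4$, combined with the translation action of $\bbA^4$ on itself. First, one constructs (or cites) $\hilb^+_{pts}(\bbA^4)$ as the BB attractor for this action: it represents the functor of families whose specialization as $t\to 0$ exists and is a subscheme supported at the origin. At the level of $\kk$-points, this parametrizes all subschemes supported at $0$, since every such subscheme admits a $\mathbb{G}_m$-limit (namely, the associated graded scheme). The map $\theta$ then combines the natural inclusion $\hilb^+_{pts}(\bbA^4) \hookrightarrow \hilb^{pts}(\bbA^4)$ on the fiber over $0\in\bbA^4$ with translation on the second factor.

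The technical core of the argument is to show that the differential $d\theta_{([I],0)}$ is an isomorphism, identifying
\[
T_{[I]}\hilb^+_{pts}(\bbA^4) \oplus T_0 \bbA^4 \;\xrightarrow{\ \sim\ }\; T_{[I]}\hilb^{pts}(\bbA^4) = \Hom_S(I, S/I).
\]
By BB theory applied to the (possibly singular) $\mathbb{G}_m$-scheme $\hilb^{pts}(\bbA^4)$, the first summand on the left is canonically the non-negative-weight subspace $\Hom_S(I,S/I)_{\geq 0}$. Meanwhile, the derivative of translation sends $\partial_i \in T_0 \bbA^4$ to the class in $\Hom_S(I,S/I)$ of the map $-\partial_i\colon I \to S/I$, which has weight $-1$. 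The standard four-term sequence derived from the truncated cotangent complex,
\[
0 \to \operatorname{Der}_\kk(S/I,S/I) \to \operatorname{Der}_\kk(S, S/I) \to \Hom_S(I, S/I) \to T^1(S/I, S/I) \to 0,
\]
shows that $\Hom_S(I,S/I)_{<0}$ is exactly the image of $\operatorname{Der}_\kk(S,S/I)_{<0}$ once $T^1(S/I, S/I)_{<0}=0$. Since every derivation of $S$ has the form $f\partial_i$ of weight $|f|-1$, the subspace $\operatorname{Der}_\kk(S, S/I)_{<0}$ consists of the constant-coefficient derivations $\kk\partial_1\oplus\dotsb\oplus\kk\partial_4 \cong T_0\bbA^4$. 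Hence the trivial-negative-tangents hypothesis makes $T_0\bbA^4 \to \Hom_S(I,S/I)_{<0}$ surjective; an injectivity check (valid because $I$ is a proper ideal supported at $0$, so the $\partial_i$ act nontrivially on $I$ modulo $I$) upgrades this to an isomorphism, and the weight decomposition then shows $d\theta_{([I],0)}$ is an isomorphism.

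To promote the infinitesimal statement to an open immersion on a Zariski neighborhood, I would verify that $\theta$ is locally a monomorphism near $([I],0)$: the inclusion $\hilb^+_{pts}(\bbA^4)\hookrightarrow \hilb^{pts}(\bbA^4)$ over $0$ is a monomorphism by the BB construction, and translation is visibly injective on supports. Combined with the infinitesimal isomorphism, \'etaleness on a neighborhood follows, and then \'etale-plus-monomorphism yields an open immersion by standard arguments. The elementary-components conclusion is then formal: near $[I]$, the scheme $\hilb^{pts}(\bbA^4)$ is identified via $\theta^{-1}$ with a neighborhood in $\hilb^+_{pts}(\bbA^4)\times\bbA^4$, so each irreducible component through $[I]$ is locally the product of a component of $\hilb^+_{pts}(\bbA^4)$ with $\bbA^4$. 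Generic subschemes in such a component have support at a single (translating) point, making the component elementary provided $S/I\not\simeq\kk$. I expect the principal obstacle to be the BB-theoretic input, namely the representability of $\hilb^+_{pts}(\bbA^4)$ and the tangent-space identification $T_{[I]}\hilb^+_{pts}(\bbA^4) = \Hom_S(I,S/I)_{\geq 0}$, since classical (smooth-scheme) BB theory does not directly apply because $\hilb^{pts}(\bbA^4)$ is typically singular at $[I]$.
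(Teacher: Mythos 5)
This is a result the paper \emph{cites} rather than proves: Theorem~\ref{thm:Jel} is attributed to \cite[Theorem 4.5]{Jelisiejew--2019} and is used in the present paper as a black box (the only in-paper discussion is the brief description of $\theta$ in \S\ref{sub:comparison}). So there is no in-paper proof to compare against. Your reconstruction does follow the strategy of Jelisiejew's actual proof: the Bia{\l}ynicki-Birula attractor $\hilb^+_{pts}$, the weight decomposition of $\Hom_S(I,S/I)$ under the dilating $\mathbb{G}_m$-action, the identification of the image of $T_0\bbA^4 \to \Hom_S(I,S/I)$ with the constant-coefficient derivations, and the use of the four-term exact sequence to see that trivial negative tangents forces surjectivity of the translation part onto the negative-weight piece. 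You also correctly flag the hard input, namely representability of $\hilb^+_{pts}$ for singular schemes and the tangent-space identification $T_{[I]}\hilb^+_{pts}(\bbA^4) \cong \Hom_S(I,S/I)_{\geq 0}$, which is non-classical BB theory.

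There is one genuine gap: the inference ``monomorphism $+$ isomorphism of tangent spaces at $([I],0)$ $\implies$ \'etale on a neighbourhood.'' This implication is false in general. A monomorphism induces a \emph{surjection} on completed local rings, and the tangent-space isomorphism only constrains the kernel to lie in the square of the maximal ideal; it does not force the kernel to vanish. For instance, $\Spec\bigl(\kk[t]/(t^2)\bigr)\hookrightarrow\Spec\bigl(\kk[t]/(t^3)\bigr)$ is a monomorphism inducing an isomorphism on tangent spaces at the closed point, yet is not \'etale and not an open immersion. To close the gap one must also compare obstruction theories: the map of deformation functors induced by $\theta$ has injective obstruction map because the obstruction space for $(\hilb^+_{pts},[I])$ is $T^2(A/\kk,A)_{\geq 0}$ (this is precisely \cite[Theorem 4.2]{Jelisiejew--2019}, cited in the proof of Corollary~\ref{cor:vanishing-T2}), which injects into $T^2(A/\kk,A)$, while the factor $\bbA^4$ is unobstructed. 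Bijective on tangent spaces plus injective on obstructions is what upgrades the formal statement to \'etaleness; the tangent-space isomorphism alone does not suffice.
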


%% For expediency, we are suppressing the Bia{\l}ynicki-Birula
%% decomposition $\hilb^+_{pts}(\bbA^4)$; %% described in
%% \cite[\S3]{Jelisiejew--2019}.  the open immersion $\iota \colon
%% \hilb^+_{pts}(\bbA^4) \to \hilb(S[t^{-1}])$ allows us to do this.
%% and only talking about $\hilb(S[t^{-1}])$ locally at $[I^h]$; see

%% \begin{proof}
%%   For more details, see \cite[\S\S3--4]{Jelisiejew--2019}.
%% \end{proof}

\begin{corollary}
  \label{cor:vanishing-T2}
  Suppose $I$ is supported at the origin with trivial negative
  tangents, $A=S/I$, and $T^2(A/\kk,A)_{\ge 0}=0$.  Then $I$ defines a
  smooth point on $\hilb^{pts}(\bbA^4)$.
\end{corollary}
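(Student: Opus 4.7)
The plan is to reduce smoothness of $[I]$ on $\hilb^{pts}(\bbA^4)$ to smoothness of $[I]$ on Jelisiejew's positive Bia{\l}ynicki-Birula scheme $\hilb^+_{pts}(\bbA^4)$, and then to show that the nonnegative part of $T^2$ governs obstructions on $\hilb^+_{pts}(\bbA^4)$.

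First, because $I$ has trivial negative tangents, Theorem \ref{thm:Jel} asserts that $\theta$ is an open immersion on a neighbourhood of $([I],0)$. Thus a neighbourhood of $[I]$ in $\hilb^{pts}(\bbA^4)$ is isomorphic to a neighbourhood of $([I],0)$ in $\hilb^+_{pts}(\bbA^4)\times \bbA^4$. Since $\bbA^4$ is smooth, smoothness of $\hilb^{pts}(\bbA^4)$ at $[I]$ is equivalent to smoothness of $\hilb^+_{pts}(\bbA^4)$ at $[I]$.

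Second, I would invoke the tangent-obstruction theory of the Hilbert scheme together with its graded refinement. The obstructions to deforming $A$ as an $S$-algebra live in $T^2(A/S,A)$. The nine-term exact sequence from Remark \ref{rmk:graded} applied to $\kk\to S\to A$, combined with $T^i(S/\kk,A)=0$ for $i\ge 1$ (since $S$ is a polynomial ring over $\kk$), yields a graded isomorphism $T^2(A/S,A)\cong T^2(A/\kk,A)$. Since $[I]$ is a $\mathbb{G}_m$-fixed point (as $I$ is bigraded, hence graded), standard Bia{\l}ynicki-Birula theory, as developed in \cite{Jelisiejew--2019}, identifies the obstruction space of $\hilb^+_{pts}(\bbA^4)$ at $[I]$ with the nonnegative-weight summand $T^2(A/\kk,A)_{\ge 0}$. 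By hypothesis this summand vanishes, so $\hilb^+_{pts}(\bbA^4)$ is smooth at $[I]$.

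Combining the two steps gives smoothness of $\hilb^{pts}(\bbA^4)$ at $[I]$. The main obstacle is the second step: one must verify that the $\mathbb{G}_m$-equivariant obstruction theory of $\hilb^+_{pts}(\bbA^4)$ really is the nonnegative part of the classical obstruction theory of $\hilb^{pts}(\bbA^4)$. This is implicit in the construction of $\hilb^+_{pts}$ via Bia{\l}ynicki-Birula decomposition in \cite{Jelisiejew--2019}, and the only work is to cite it cleanly; the passage from $T^2(A/S,A)$ to $T^2(A/\kk,A)$ is an immediate consequence of the long exact sequence and the smoothness of $S$.
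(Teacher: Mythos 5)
Your proposal is correct and follows essentially the same two-step route as the paper: first use Theorem \ref{thm:Jel} to reduce smoothness of $\hilb^{pts}(\bbA^4)$ at $[I]$ to smoothness of $\hilb^+_{pts}(\bbA^4)$ at $[I]$, then invoke Jelisiejew's identification of the obstruction space of $\hilb^+_{pts}(\bbA^4)$ at a fixed point with $T^2(A/\kk,A)_{\ge 0}$ (the paper cites \cite[Theorem 4.2]{Jelisiejew--2019} directly, which already packages the $T^2(A/S,A)\cong T^2(A/\kk,A)$ identification you spell out).
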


\begin{proof}
  %  There is a natural identification $[I^h]=[I]$
  %  \cite[p.\ 8]{Jelisiejew--2019}.
  By Theorem \ref{thm:Jel}, it suffices to show that $[I]$ defines a
  smooth point of $\hilb^+_{pts}(\bbA^4)$.  By \cite[Theorem
    4.2]{Jelisiejew--2019}, the obstruction space for
  $(\hilb^+_{pts}(\bbA^4), [I])$ is given by $T^2(A/\kk,A)_{\ge 0}$,
  which vanishes by assumption.  Therefore, obstructions to all
  higher-order deformations vanish, showing smoothness of $[I]$ in
  $\hilb^+_{pts}(\bbA^4)$.
\end{proof}

%% \begin{remark}
%%   A highly useful aspect of Theorem \ref{thm:Jel} is that rather than
%%   studying the obstruction space $T^2(A/\kk,A)$ to lifting
%%   infinitesimal deformations of $A := S/I$, we may study the smaller
%%   obstruction space $T^2(A/\kk,A)_{\ge 0}$ to lifting infinitesimal
%%   deformations of $S[t^{-1}]/I^h$; cf.\ \cite[Theorem
%%     10.1]{Hartshorne--2010} and \cite[Theorem 4.2]{Jelisiejew--2019}.
%% \end{remark}
% --------------------------------------------------------------------

% --------------------------------------------------------------------
% Section -- Showing there are only trivial negative tangents
% --------------------------------------------------------------------
\section{Trivial Negative Tangents, I}
\label{sec:tnt1}

Our goal in this section is to understand the tangent space of the
point $[I] \in \hilb^{pts}(\bbA^4)$ corresponding to an ideal of the
form
\[
I := \llrr{x,y}^{n_1} + \llrr{z,w}^{n_2} + \llrr{xz-yw} \subset S :=
\kk[x,y,z,w],
\]
for some $n_1, n_2 \ge 2$.
%% , showing that $I$ has trivial negative
%% tangents.
%%  Let $I := \llrr{x,y}^{n_1} +
%% \llrr{z,w}^{n_2} + \llrr{xz-yw} \subset S := \kk[x,y,z,w]$ be a fixed
%% ideal, for some $n_1, n_2 \ge 2$; 
The ideal $I$ is $\fmm$-primary,
%% as $\fmm \supset I \supset \fmm^{n_1+n_2}$,
where $\fmm := \llrr{x,y,z,w} \subset S$ is the ideal of the origin $0
%% We show that $I$ has trivial negative tangents, proceeding via
%% elementary calculations.
\in \bbA^4$.
% Proceeding via elementary calculations,
We will prove:
%% the following.

\begin{proposition}
  \label{prop:Negative-tangents-for-I}
  The ideal $I$ has trivial negative tangents, hence
  %Proposition \ref{prop:tnt1} implies that
  every irreducible component of $\hilb^{pts}(\bbA^4)$ containing
  $[I]$ must be elementary by \cite[Theorem 1.2]{Jelisiejew--2019}.
\end{proposition}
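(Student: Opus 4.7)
The plan is to show $T^1(A/\kk, A)_{<0} = 0$ by a direct computation using the truncated cotangent complex of $\kk \to A$. Take the presentation $S \twoheadrightarrow A$ with kernel $I$ generated by the monomials $g_k := x^{n_1-k}y^k$ (for $0 \le k \le n_1$) of bidegree $(n_1, 0)$, the monomials $h_l := z^{n_2-l}w^l$ (for $0 \le l \le n_2$) of bidegree $(0, n_2)$, and $f := xz - yw$ of bidegree $(1, 1)$. Since these generators are all bihomogeneous under the bigrading of \S\ref{sub:BF}, the module $T^1(A/\kk, A)$ inherits a $\ZZ^2$-grading refining the standard grading (cf.\ Remark \ref{rmk:graded}), so it suffices to prove $T^1(A/\kk, A)_{(a,b)} = 0$ whenever $a + b < 0$.

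A bidegree-$(a,b)$ element of $T^1$ is represented by an $S$-linear map $\varphi \colon I \to A$ sending $g_k \mapsto A_{(n_1 + a,\,b)}$, $h_l \mapsto A_{(a,\,n_2 + b)}$, and $f \mapsto A_{(1+a,\,1+b)}$, respecting the syzygies of $I$, and considered modulo maps coming from $\kk$-derivations $\theta \colon S \to A$. The non-Koszul syzygies fall into four families: $y g_k - x g_{k+1}$ and $w h_l - z h_{l+1}$ (the power-ideal syzygies), together with the mixed relations
\[
  x^{n_1-1-k} y^k f \;=\; z\, g_k - w\, g_{k+1}, \qquad z^{n_2-1-l} w^l f \;=\; x\, h_l - y\, h_{l+1},
\]
obtained by expanding $f$ against monomials in $\langle x, y\rangle^{n_1 - 1}$ or $\langle z, w\rangle^{n_2 - 1}$. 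Since $A_{(i,j)} = 0$ unless $0 \le i \le n_1 - 1$ and $0 \le j \le n_2 - 1$, only finitely many bidegrees $(a,b)$ with $a + b < 0$ can support a nonzero $\varphi$, and by the symmetry $(x,y,n_1) \leftrightarrow (z,w,n_2)$ we may further reduce to $a \le b$.

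For each surviving bidegree, I would parameterize the allowable $\varphi$ by the coefficients of $\varphi(g_k)$, $\varphi(h_l)$, and $\varphi(f)$ in the relevant graded pieces of $A$, impose the constraints coming from the four syzygy families above, and compare the resulting space to the image of the derivation module. When either $1+a < 0$ or $1+b < 0$, the image $\varphi(f)$ is forced to vanish and the problem decouples into a separate analysis of the two power ideals $\langle x, y\rangle^{n_1}$ and $\langle z, w\rangle^{n_2}$, where the vanishing of negative-degree tangents is classical. The substantive cases are those in which both $1+a$ and $1+b$ are nonnegative---essentially $(a,b) \in \{(-1,0), (0,-1), (-1,-1)\}$---where the mixed syzygies genuinely couple $\varphi(f)$ to the images of the $g_k$ and $h_l$.

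The main obstacle I anticipate is verifying that in these low bidegrees the dimension of the space of syzygy-compatible $\varphi$ exactly matches the dimension of the image of the derivation module. Concretely, given such a $\varphi$, one must exhibit an explicit derivation $\theta \in \Hom_\kk(S, A)$ of the same bidegree---determined by $\theta(x), \theta(y) \in A_{(1+a, b)}$ and $\theta(z), \theta(w) \in A_{(a, 1+b)}$---whose Leibniz action on each $g_k$, $h_l$, and on $f$ reproduces the values prescribed by $\varphi$. Tracking this coupling via the mixed relations through the kernel identification $xz = yw$ in $A$ is the delicate bookkeeping step; once the match is established case by case, the proposition follows.
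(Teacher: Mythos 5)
Your plan is at heart the same as the paper's: compute $T^1(A/\kk,A)_{<0}$ directly from the syzygies of $I$, exploit the (bi)grading, observe that for $\varphi$ of sufficiently negative degree the mixed relations force $\varphi(xz-yw)=0$, and then check the borderline degrees against the four trivial derivations. What you have, however, is a proof outline rather than a proof, and the two points you defer are precisely where the paper has to do real work.

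First, the claim that once $\varphi(f)=0$ the analysis ``decouples into a separate analysis of the two power ideals, where the vanishing of negative-degree tangents is classical'' is not justified and is misleading as stated. The target is $A=S/I$, not $S/\llrr{x,y}^{n_1}$, so you are studying tuples $p_0,\dotsc,p_{n_1}\in A$ with $yp_k=xp_{k+1}$ (and, from the mixed syzygy, $zp_k=wp_{k+1}$), where multiplication in $A$ is twisted by $xz=yw$. Moreover, in bidegrees $(a,b)$ with $a\le -2$ there are no derivations at all, so ``trivial'' here must mean the $\Hom$-space literally vanishes --- and that is not automatic, since $A_{(n_1+a,\,b)}\ne 0$ for $-n_1\le a\le -2$ and $0\le b\le n_2-1$. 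The paper proves this vanishing via Lemma \ref{l:mindingPsAndQs} and Corollary \ref{cor:mindingPsAndQsInGeneral}, which give a normal form for solutions to $yp_k=xp_{k+1}$ in $A$: the only low-degree contribution lives in $\Ann_{x,y}=A_{(n_1-1,\ast)}$, which is incompatible with $a\le -2$. Some argument of this kind is needed; you cannot simply cite the classical theory of power ideals.

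Second, you correctly isolate the bidegrees $(-1,0)$, $(0,-1)$, $(-1,-1)$ as the place where $\varphi(f)$ genuinely couples to the $\varphi(g_k)$ and $\varphi(h_l)$, and you describe the remaining work as ``delicate bookkeeping'' to match $\varphi$ against a derivation. That bookkeeping is the substance of Proposition \ref{prop:tnt1}: one has to expand relation \eqref{syz:qxy} in the basis $\cB$, derive the linear constraints on the coefficients $a_i^{(k)}$ and $c_i$, and check that they force $\varphi$ to be the specific combination $a\partial_x+a'\partial_y+b\partial_z+b'\partial_w$. (As an aside, bidegree $(-1,-1)$ is actually trivial: $\varphi(g_k)\in A_{(n_1-1,-1)}=0$, and then \eqref{syz:qxy} forces $\varphi(f)=0$, since $x^{n_1-1-k}y^k\ne 0$ in $A$; the only real cases are $(-1,0)$ and $(0,-1)$, which are symmetric.) Until that computation is carried out, the proposition is not established. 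So: right roadmap, but the two steps you flag as ``to be done'' are exactly the content of the paper's \S\ref{subsec:prelimLs-tngt-sp-dim} and Proposition \ref{prop:tnt1}, and they need to be supplied.
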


Let $\varphi \in \Hom_S(I, S/I) \cong T_{[I]} \hilb^{pts}(\bbA^4)$, so
that $\varphi$ is determined by its values on the generators
\[
x^{n_1}, x^{n_1-1}y, \dotsc, y^{n_1}, z^{n_2}, z^{n_2-1}w, \dotsc,
w^{n_2}, xz-yw
\]
of $I$.  These values lie in the $\kk$-vector space $A := S/I$ spanned
by the cosets $x^{u_1}y^{u_2}z^{u_3}w^{u_4} + I$, where $u_1+u_2 <
n_1$ and $u_3+u_4 < n_2$; a basis of $S/I$ is obtained by ignoring any
such monomial divisible by $yw$, that is, setting
\[
%% We denote \[ \varphi(x^{n_1-i}y^i) := \sum_{\bu \in \cB}
%% a_{\bu}^{(i)} \bx^{\bu}+ I, \;\; \varphi(z^{n_2-j}y^j) := \sum_{\bu
%% \in \cB} b_{\bu}^{(j)} \bx^{\bu}+ I, \;\; \text{and} \;\;
%% \varphi(xz-yw) := \sum_{\bu \in \cB} c_{\bu} \bx^{\bu}+ I, \] where
%% $0 \le i \le n_1, 0 \le j \le n_2$.
\cB := \{ x^{u_1}y^{u_2}z^{u_3}w^{u_4} + I \mid u_1+u_2 <
n_1,\ u_3+u_4< n_2,\ u_2u_4 = 0 \},
%\cB := \{ (u_1, u_2, u_3, u_4) \in \NN^4 \mid u_1+u_2 <
%n_1,\ u_3+u_4< n_2,\ u_2u_4 = 0 \},
\]
yields a monomial basis
%$(\bx^{\bu} \mid \bu \in \cB)$
for $S/I$. 
%where $\bx^{\bu} := x^{u_1}y^{u_2}z^{u_3}w^{u_4}$.
In order for $\varphi$ to be $S$-linear, it must vanish on the
syzygies of $I$, i.e.\ the following relations must hold:
\begin{align}
  & y \varphi(x^{n_1-k}y^k) = x \varphi(x^{n_1-k-1}y^{k+1}),
  &\text{for all } 0 \le k < n_1, \label{syz:xy} \\
  & w \varphi(z^{n_2-\ell}w^{\ell}) = z
  \varphi(z^{n_2-\ell-1}w^{\ell+1}), &\text{for all } 0 \le \ell <
  n_2, \label{syz:zw} \\
  & x^{n_1-1-k}y^k \varphi(xz-yw) = z \varphi(x^{n_1-k}y^k) - w
  \varphi(x^{n_1-1-k}y^{k+1}), &\text{for all } 0 \le k <
  n_1, \label{syz:qxy} \\
  & z^{n_2-1-\ell}w^{\ell} \varphi(xz-yw) = x
  \varphi(z^{n_2-\ell}w^{\ell}) - y \varphi(z^{n_2-1-\ell}w^{\ell+1}),
  &\text{for all } 0 \le \ell < n_2. \label{syz:qzw}
\end{align}
%These, in turn, force relations among the constants defining
%$\varphi$, providing explicit restrictions on the tangent space
%$\Hom_S(I,S/I)$.

Our proof of Proposition \ref{prop:Negative-tangents-for-I} will
proceed as follows.  To prove that the tangent space $\Hom_S(I, S/I)$
vanishes in degrees at most $-2$, we will (essentially) only need to
use relations (\ref{syz:xy}) and (\ref{syz:zw}).  Then, to see that
$\Hom_S(I, S/I)_{-1}$ is exactly the $\kk$-span of the trivial tangent
vectors, we will rely on relations (\ref{syz:qxy}) and
(\ref{syz:qzw}).
%% As almost all known elementary components of $\hilb^{pts}(\bbA^d)$
%% are intimately related to the \emph{Iarrobino--Emsalem
%% component}---containing the smooth point $\llrr{x,y}^2 +
%% \llrr{z,w}^2 + \llrr{q}$---we are led to wonder how crucial a
%% ``mixed'' quadratic $q$ is to the existence of such components.

\subsection{Preliminary lemmas}
\label{subsec:prelimLs-tngt-sp-dim}

We collect several helpful lemmas that will be used throughout this
paper. Given $p\in S/I$, we may expand it in the basis $\cB$. We refer
to the \emph{\bfseries support} of $p$ as the set of basis elements
with nonzero coefficients showing up in the expansion of $p$---the
support of $0$ is $\varnothing$. Note that
$\Ann_{x,y}:=\Ann(x)=\Ann(y)$ and $\Ann_{z,w}:=\Ann(z)=\Ann(w)$ are
spanned by basis vectors, so it makes sense to say whether the support
of $p$ intersects $\Ann_{x,y}$ or $\Ann_{z,w}$. Note also that if
$p,q\in S/I$ have disjoint support, then $p=q$ forces $p=q=0$.

\begin{lemma}
  \label{l:decomposition-of-polynomials}
  If $p\in S/I$, then it may be decomposed as 
  \[
  p=\sum_{0<i<n_1}y^ip_{i,0}+\sum_{0<j<n_2}w^jp_{0,j}+p_{0,0},
  \]
  where 
  \begin{enumerate}
  \item each $p_{i,j}$ is a polynomial in $x,z$,
  \item for $i\geq0$, $p_{i,0}$ has $x$-degree less than $n_1-i$ and
    $z$-degree less than $n_2$,
  \item for $j\geq0$, $p_{0,j}$ has $x$-degree less than $n_1$ and
    $z$-degree less than $n_2-j$, and
  \item all of the terms in the sum have disjoint support.
  \end{enumerate}
  Furthermore, we have
  \[
  \dim_\kk(S/I) =d(n_1,n_2) =\frac{n_1n_2}{2}(n_1+n_2).
  \]
\end{lemma}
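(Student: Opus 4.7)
The plan is to read off the decomposition directly from the explicit monomial basis $\cB$ introduced just before the lemma statement, and then compute the dimension by counting basis elements in three groups.

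First, I would recall that $\cB$ consists of cosets of monomials $\bx^{\bu}$ satisfying $u_1+u_2<n_1$, $u_3+u_4<n_2$, and $u_2 u_4 = 0$. The last condition means that in each basis monomial at most one of $y$ and $w$ appears, so every basis monomial falls into exactly one of the three mutually exclusive types: (a) $u_2 > 0$ (and $u_4 = 0$), (b) $u_4 > 0$ (and $u_2 = 0$), or (c) $u_2 = u_4 = 0$. Given an arbitrary $p \in S/I$, expand it uniquely in $\cB$ and group its terms according to (a), (b), (c). This gives the three pieces in the statement: for each $0 < i < n_1$ collect all monomials with $u_2 = i$ and factor out $y^i$ to define $p_{i,0}$; for each $0 < j < n_2$ collect all monomials with $u_4 = j$ and factor out $w^j$ to define $p_{0,j}$; and let $p_{0,0}$ be the sum of basis terms with $u_2 = u_4 = 0$.

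Next I would verify the four listed properties. Since the monomials of type (a) have $u_4 = 0$, the extracted $p_{i,0}$ involves only $x$ and $z$; the inequality $u_1 + u_2 < n_1$ with $u_2 = i$ forces $x$-degree $u_1 < n_1 - i$, and $u_3 + u_4 < n_2$ with $u_4 = 0$ gives $z$-degree $u_3 < n_2$. The analogous check works for (b), and (c) is immediate. Disjoint support is automatic because each basis monomial has been assigned to exactly one group, and multiplication by $y^i$ (respectively $w^j$) sends distinct basis monomials in $x,z$ to distinct basis monomials in $\cB$.

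Finally, the dimension formula is obtained by summing the sizes of the three groups. The number of admissible $(x,z)$-monomials appearing as $p_{i,0}$-terms for fixed $0<i<n_1$ is $(n_1-i)n_2$, so the type-(a) contribution is $n_2 \sum_{i=1}^{n_1-1}(n_1-i) = \tfrac{n_1(n_1-1)}{2}n_2$; symmetrically the type-(b) contribution is $\tfrac{n_2(n_2-1)}{2}n_1$; and type (c) contributes $n_1 n_2$. Adding these gives
\[
n_1 n_2 + \frac{n_1 n_2 (n_1-1)}{2} + \frac{n_1 n_2 (n_2-1)}{2} = \frac{n_1 n_2 (n_1+n_2)}{2} = d(n_1,n_2),
\]
completing the proof. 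The only conceivable obstacle is verifying that $\cB$ really does descend to a basis of $S/I$, but this has been asserted in the text preceding the lemma, so no real difficulty remains; the lemma is essentially a bookkeeping reformulation of that fact.
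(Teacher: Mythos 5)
Your proof is correct and follows essentially the same route as the paper's: both expand $p$ in the monomial basis $\cB$, group terms by $y$- and $w$-exponents (your types (a), (b), (c) with the trichotomy $u_2>0$, $u_4>0$, $u_2=u_4=0$), read off properties (i)--(iv), and count basis monomials in each group to get $\tfrac{n_1(n_1-1)}{2}n_2 + n_1n_2 + \tfrac{n_2(n_2-1)}{2}n_1 = \tfrac{n_1n_2}{2}(n_1+n_2)$. Your writeup is simply a more explicit version of the paper's one-line justification.
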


\begin{proof}
  %We simply express $p$ in our chosen basis, and collect terms with
  %$y^i$ term as well as collect terms with $w^j$ term. We use the
  %relation $yw=xz$ to guarantee every term either has a $y$, a $w$,
  %or neither.
  Expressing $p$ as a linear combination of elements of $\cB$ and
  grouping basis vectors by their $y$- and $w$-exponents, we obtain
  our desired decomposition of $p$ with properties (i)--(iv).

  Because $p_{i,0}$ has $(n_1-i)n_2$ monomials in $x$ and $z$, and
  $p_{0,j}$ has $n_1(n_2-j)$ monomials in $x$ and $z$, we see $S/I$
  has dimension
  \begin{align*}
    \bigl( (n_1-1)n_2 + (n_1-2)n_2 + \dotsb + n_2 \bigr) + (n_1n_2) &+
    \bigl( (n_2-1)n_1 + (n_2-2)n_1 + \dotsb + n_1 \bigr) \\
    &=\frac{(n_1-1)n_1}{2}n_2 + n_1n_2 + n_1\frac{(n_2-1)n_2}{2} \\
    &= \frac{n_1n_2}{2}(n_1+n_2).\qedhere
  \end{align*}
\end{proof}

\begin{lemma}
  \label{l:mindingPsAndQs}
  If $p,q\in S/I$ satisfy
  \[
  yp=xq,
  \]
  then we may write
  \[
  p=p'+xr_y+wr_w,\quad\quad q=q'+yr_y+zr_w
  \]
  such that
  \begin{enumerate}
  \item $r_y$ is a polynomial in $x,y,z$ and $r_w$ is a polynomial in
    $x,z,w$,
  \item $p',q'\in \Ann_{x,y}$, while $p-p'$ and $q-q'$ are supported
    away from $\Ann_{x,y}$,
  \item $p'$, $xr_y$, and $wr_w$, have mutually disjoint support, and
  \item $q'$, $yr_y$, and $zr_w$ have mutually disjoint support.
    %\item $p_1=xr_{xyz}$, $q_1=yr_{xyz}$, with $r_{xyz}$ a linear
    %combination of basis vectors $x^iy^jz^k$ with $i+j\leq n-3$,
    %$k\leq n-1$ \item $p_2=wr_w$, $q_2=zr_w$, with $r_w$ a linear
    %combination of basis vectors $x^iz^kw^\ell$ with $k+\ell\leq
    %n-2$, $i\leq n-2$.
  \end{enumerate}
  Furthermore, $p'$ and $q'$ are unique, the image of $r_y$ in
  $S/\Ann_{x,y}$ is unique, and the image of $r_w$ in $S/\Ann_{z,w}$
  is unique.
\end{lemma}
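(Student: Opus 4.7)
The plan is to work in the monomial basis $\cB$ of $A := S/I$ and construct the decomposition explicitly, invoking the relation $yp = xq$ at one decisive step. First, define $p'$ (resp.\ $q'$) to be the sum of those basis terms of $p$ (resp.\ $q$) lying in $\Ann_{x,y}$; then $p', q' \in \Ann_{x,y}$ by construction, while $\tilde p := p - p'$ and $\tilde q := q - q'$ are supported away from $\Ann_{x,y}$. Since $x p' = y p' = 0$ (and likewise for $q'$), the given relation descends to $y\tilde p = x\tilde q$.

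Next, partition $\tilde p$ by the $w$-exponent of each basis monomial, writing $\tilde p = \tilde p_{xyz} + \tilde p_w$. Every monomial in $\tilde p_w$ has $u_2 = 0$ and $u_4 \geq 1$, so $\tilde p_w = w r_w$, where $r_w \in \kk[x,z,w]$ is obtained by dividing each such monomial by $w$. The crux is to show that every basis monomial in $\tilde p_{xyz}$ has $u_1 \geq 1$. For this, I would compute the coefficient of a basis monomial $y^{b'} z^{c'}$ with $1 \leq b' \leq n_1 - 1$ and $c' < n_2$ on both sides of $y\tilde p = x\tilde q$: on the left, multiplication by $y$ sends a basis monomial to $y^{b'} z^{c'}$ only from $y^{b'-1} z^{c'}$ (the Type B contributions would produce nonzero $x$- or $w$-exponent), so the coefficient equals that of $y^{b'-1} z^{c'}$ in $\tilde p$; on the right it is $0$ because $x$ times any basis monomial is either zero or has $u_1 \geq 1$. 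This forces the vanishing of all $u_1 = 0$ coefficients of $\tilde p_{xyz}$, allowing us to write $\tilde p_{xyz} = x r_y$ for some $r_y \in \kk[x,y,z]$.

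To produce the corresponding expression for $\tilde q$, use the relation $yw = xz$ in $A$:
\[
y\tilde p \;=\; y(xr_y + wr_w) \;=\; xyr_y + xzr_w \;=\; x(yr_y + zr_w).
\]
Since $x\tilde q = y\tilde p$, we have $x(\tilde q - yr_y - zr_w) = 0$, so $\tilde q - yr_y - zr_w \in \Ann(x) = \Ann_{x,y}$. A direct exponent check shows that $yr_y + zr_w$ is supported away from $\Ann_{x,y}$ ($yr_y$ has total $u_1 + u_2 \leq n_1 - 2$, while $zr_w$ has $u_1 \leq n_1 - 2$ and $u_2 = 0$); combined with the same property for $\tilde q$, the difference vanishes, yielding $\tilde q = yr_y + zr_w$. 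The four mutual-disjointness conditions then follow by tracking exponents in each piece.

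Uniqueness is immediate: $p'$ and $q'$ are the unique projections of $p$ and $q$ onto $\Ann_{x,y}$ in the basis $\cB$. If $r_y, r_y'$ both realize the decomposition, then $x(r_y - r_y') = 0$, so $r_y - r_y' \in \Ann(x) = \Ann_{x,y}$, and symmetrically $r_w - r_w' \in \Ann(w) = \Ann_{z,w}$. The main obstacle is the vanishing step for the $u_1 = 0$ terms of $\tilde p_{xyz}$, where the hypothesis $yp = xq$ is used essentially; everything else reduces to careful bookkeeping in $\cB$.
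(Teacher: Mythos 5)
Your proof is correct and uses the same monomial-basis bookkeeping in $\cB$ as the paper's. The only real organizational difference is in how you recover $\tilde q$: the paper parameterizes both $\tilde p$ and $\tilde q$ by their $(y^i,w^j)$-decompositions, multiplies out $yp$ and $xq$, and matches terms to get $p_{i,0}=xq_{i+1,0}$, $zp_{0,j+1}=q_{0,j}$, and the boundary vanishings; you instead show the $w$-free part of $\tilde p$ is $x$-divisible by a coefficient comparison, and then derive $\tilde q = yr_y + zr_w$ from $x(\tilde q - yr_y - zr_w)=0$ (via $yw=xz$) together with the fact that both sides are supported away from $\Ann_{x,y}$. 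That last step is a pleasant shortcut, but the underlying computation is the same. One point in the uniqueness paragraph deserves to be spelled out: the claim $x(r_y - r'_y)=0$ is not literally immediate from $xr_y + wr_w = xr'_y + wr'_w$; it requires noting that $xr_y$ and $xr'_y$ lie in the span of basis monomials with $u_4=0$ while $wr_w$ and $wr'_w$ lie in the span with $u_4\ge 1$, so the equation decouples by $w$-exponent. This is exactly what the paper achieves by collecting $y^iw^j$-powers at the end of its proof.
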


\begin{proof}
  Expanding $p$ in the basis $\cB$, let $p'$ be the sum of all
  monomial terms of $p$ which are annihilated by $x$ (equivalently
  $y$).  This gives a decomposition analogous to Lemma
  \ref{l:decomposition-of-polynomials}, where we write
  \[
  p=p'+\sum_{0<i<n_1-1}y^ip_{i,0}+\sum_{0<j<n_2}w^jp_{0,j}+p_{0,0}
  \]
  and 
  \[
  q=q'+\sum_{0<i<n_1-1}y^iq_{i,0}+\sum_{0<j<n_2}w^jq_{0,j}+q_{0,0}
  \]
  and where no monomial terms of any of the terms $y^ip_{i,0}$,
  $w^jp_{0,j}$, $p_{0,0}$, $y^iq_{i,0}$, $w^jq_{0,j}$, $q_{0,0}$ are
  annihilated by $y$ (equivalently $x$). Then
  \[
  yp=\sum_{0<i<n_1-1}y^{i+1}p_{i,0}+\sum_{0<j<n_2}w^{j-1}xzp_{0,j}+yp_{0,0}
  \]
  and 
  \[
  xq=\sum_{0<i<n_1-1}y^ixq_{i,0}+\sum_{0<j<n_2}w^jxq_{0,j}+xq_{0,0}.
  \]
  Since none of the terms in the sum are zero (by hypothesis),
  equating terms with the same $y^iw^j$-powers, we see
  \[
  \begin{cases}
    \begin{aligned}
      p_{i,0} =xq_{i+1,0} && 0\leq i\leq n_1-3,\\
      p_{n_1-2,0} = 0, &&\\
      zp_{0,j+1} =q_{0,j} && 0\leq j\leq n_2-2,\\
      q_{0,n_2-1} =0. && 
    \end{aligned}
  \end{cases}
  \]
  Therefore, letting
  \[
  r_y=\sum_{0\leq i\leq n_1-3}y^iq_{i+1,0} \quad\textrm{and}\quad
  r_w=\sum_{0\leq j\leq n_2-2}w^jp_{0,j+1}
  \]
  we have the desired decompositions $p=p'+xr_y+wr_w$ and
  $q=q'+yr_y+zr_w$.

  It remains to prove the uniqueness properties. First, since $xr_y$
  and $wr_w$ have supports disjoint from $\Ann_{x,y}$, we see $p'$ is
  uniquely determined. Now suppose we have different choices $r'_y$
  and $r'_w$ with properties (i)--(iv). Since $p'$ is uniquely
  determined, we have
  \[
  xr_y+wr_w=xr'_y+wr'_w.
  \]
  Let $r_y=r_{y,0}+ys_y$ and $r'_y=r'_{y,0}+ys'_y$, where $r_{y,0}$
  and $r'_{y,0}$ have no $y$-terms. Expanding, we have
  \[
  xr_{y,0}+xys_y+wr_w=xr'_{y,0}+xys'_y+wr'_w.
  \]
  Then collecting terms with $y^0w^0$-powers, we see
  $xr_{y,0}=xr'_{y,0}$, so the image of $r_{y,0}$ in $S/\Ann_{x,y}$ is
  uniquely determined. Similarly, collecting terms with
  $y^iw^0$-powers for $i>0$, we have $xys_y=xys'_y$, so the image of
  $ys_y$ in $S/\Ann_{x,y}$ is also uniquely determined. Therefore, the
  image of $r_y$ in $S/\Ann_{x,y}$ is uniquely determined. Finally,
  collecting terms with $y^0w^j$-powers for $j>0$, we see
  $wr_w=wr'_w$, so the image of $r_w$ in $S/\Ann_{z,w}$ is uniquely
  determined.
\end{proof}

More generally, we have the following result.

\begin{corollary}
  \label{cor:mindingPsAndQsInGeneral}
  Let $p_0, p_1, \dotsc, p_n\in S/I$ satisfy the property
  \[
  yp_k=xp_{k+1},
  \]
  for all $0\leq k<n$, where $n \le n_1$. Then there exist $t_0, t_1,
  \dotsc, t_n \in S/I$ such that we may write
  \[
  p_k = p'_k+\sum_{i=0}^kx^{n-k}y^{k-i}z^it_i +
  \sum_{i=k+1}^nx^{n-i}z^kw^{i-k}t_i,
  \]
  for all $0 \le k \le n$, where
  \begin{enumerate}
  \item $p'_k\in \Ann_{x,y}$ and $p_k-p'_k$ is supported away from
    $\Ann_{x,y}$,
  \item $t_0$ is a polynomial in $x,y,z$ and $t_n$ is a polynomial in
    $x,z,w$,
  \item $t_i$ is a polynomial in $x,z$ for $0<i<n$,
    %%  and is the zero polynomial if $i\ge n_2$, \item for all
    %% $i<n_2$, the monomials in the support of $t_i$ have bidegree
    %% $(j_{1},j_{2})$ with $j_1<n_1-n+i-1$ and $j_{2}<n_2-i$, and
  \item
    %%  for all $i<n_2$, the bounds $j_1<n_1-1-n+i$ and $j_{2}<n_2-i$
    %% both hold, where $(j_{1},j_{2})$ denotes the bidegree of any
    %% element in the support of $t_i$, and
    if $(j_{1},j_{2})$ denotes the bidegree of any element in the
    support of $t_i$, then the bounds $0\le j_1<n_1-1-n+i$ and $0\le
    j_{2}<n_2-i$ both hold, and
  \item for every $k$, all terms $p'_k$,
    $\{x^{n-k}y^{k-i}z^it_i\}_{0\leq i\leq k}$, and
    $\{x^{n-i}z^kw^{i-k}t_i\}_{k<i\leq n}$ have mutually disjoint
    support.
  \end{enumerate}
\end{corollary}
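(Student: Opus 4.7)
The plan is to proceed by induction on $n$.

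\emph{Base case} ($n=1$): Apply Lemma \ref{l:mindingPsAndQs} to the pair $(p_0,p_1)$ and set $t_0 := r_y$, $t_1 := r_w$. Properties (i), (ii), and (v) are immediate from the Lemma, and (iii) is vacuous. The bidegree bounds in (iv) follow from the explicit formula $r_y = \sum_{0 \le i \le n_1-3} y^i q_{i+1,0}$ obtained in the proof of Lemma \ref{l:mindingPsAndQs}: since each summand $y^i q_{i+1,0}$ is placed outside $\Ann_{x,y}$, the $x$-degree of $q_{i+1,0}$ is at most $n_1 - i - 3$, so $j_1 = i + \deg_x q_{i+1,0} < n_1 - 2$. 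The analysis of $r_w$ is symmetric.

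\emph{Inductive step}: Suppose the corollary holds for ladders of length $n$ (i.e. with $n-1$ relations). Given $(p_0,\ldots,p_n)$ with $n$ relations, the plan is to apply Lemma \ref{l:mindingPsAndQs} to the initial pair $(p_0,p_1)$ and then apply the inductive hypothesis to the sub-ladder $(p_1,\ldots,p_n)$. From the Lemma we obtain $p_0 = p'_0 + xs + wu$ and $p_1 = p'_1 + ys + zu$ with $s$ polynomial in $x,y,z$, $u$ polynomial in $x,z,w$, and $p'_0,p'_1 \in \Ann_{x,y}$. From the inductive hypothesis we obtain $\hat{t}_0,\ldots,\hat{t}_{n-1}$; in particular $p_1 = p'_1 + x^{n-1}\hat{t}_0 + \sum_{i=1}^{n-1} x^{n-1-i} w^i \hat{t}_i$.

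Reconciling these two decompositions of $p_1$ identifies the new $t_i$'s. Matching parts of positive $y$-degree forces $ys$ to equal the positive-$y$-degree part of $x^{n-1}\hat{t}_0$, so one sets $t_0$ to the ``$y$-divided'' part of $\hat{t}_0$ (a polynomial in $x,y,z$). Matching the $y$-degree-zero part forces $zu = x^{n-1}\hat{t}_0^{(y=0)} + \sum_{i=1}^{n-1} x^{n-1-i} w^i \hat{t}_i$; using $xz = yw$ in $S/I$, each term $x^{n-1-i}w^i\hat{t}_i$ is rewritten in the form $x^{n-i}zw^{i-1}t_i$ with $t_i$ polynomial in $x,z$ (for $1 \le i \le n-1$) or in $x,z,w$ (for $i = n$), producing the desired form for $p_1$. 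Decompositions for $p_k$ with $k \ge 2$ then follow from the inductive decomposition of $p_k$, multiplied by the appropriate power of $x$ to pass from exponent $n-1$ to $n$, with $xz = yw$ again used to mediate between the two sums. That the residual terms $p'_k$ actually lie in $\Ann_{x,y}$ (not just $\Ann(x^k)$) follows from $p'_0 \in \Ann_{x,y}$ together with the cancellation $y S_{k-1} = x S_k$ among the $t_i$-sums $S_k$, which yields $y p'_{k-1} = x p'_k$ and thereby propagates $\Ann_{x,y}$ up the ladder.

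The main obstacle will be verifying properties (iv) and (v). For (v), disjointness of supports follows because the prefix monomials $x^{n-k}y^{k-i}z^i$ and $x^{n-i}z^kw^{i-k}$ all lie in $\cB$ (they satisfy $u_2 u_4 = 0$) and carry distinct $y$- or $w$-degrees indexed by $i$, while the polynomial-ring types of the $t_i$ prevent overlap with $p'_k \in \Ann_{x,y}$. For (iv), one must show that each $t_i$ satisfies the tighter bounds $j_1 < n_1 - 1 - n + i$ and $j_2 < n_2 - i$: these propagate from the inductive bounds on the $\hat{t}_i$'s together with the one-unit bidegree adjustments produced by the $x$-multiplication (when lifting from exponent $n-1$ to $n$) and the $xz \leftrightarrow yw$ substitutions. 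The crucial point, exploited at every step, is that each invocation of Lemma \ref{l:mindingPsAndQs} yields representatives avoiding $\Ann_{x,y}$, tightening the naive basis bounds by exactly one unit---which is precisely what is needed to make the induction close.
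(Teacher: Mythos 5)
Your strategy is genuinely different from the paper's: you apply Lemma~\ref{l:mindingPsAndQs} only to the top pair $(p_0,p_1)$ and the inductive hypothesis only to the tail $(p_1,\dotsc,p_n)$, whereas the paper applies the inductive hypothesis to \emph{both} overlapping sub-ladders $(p_0,\dotsc,p_{n-1})$ and $(p_1,\dotsc,p_n)$ and compares the two resulting decompositions of each $p_k$. Your one-sided variant can be made to work, but as written there is a real gap at the crux. The assertion ``using $xz=yw$ in $S/I$, each term $x^{n-1-i}w^i\hat t_i$ is rewritten in the form $x^{n-i}zw^{i-1}t_i$'' is not a legitimate rewrite: $xz-yw$ plays no role, since each $\hat t_i$ with $i\geq 1$ contains no $y$, and $x^{n-1-i}w^i$ and $x^{n-i}zw^{i-1}$ are distinct monomials of $\cB$. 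What actually makes the reconciliation of the two expressions for $p_1$ possible is that each $\hat t_i$ ($i\geq 1$), and the $y$-free part $\hat t_0^{(y=0)}$ of $\hat t_0$, is \emph{divisible by} $z$. That divisibility is the nontrivial content of the step --- it is the one-sided analogue of the relations $t_i=xu_i$, $\tau_{i-1}=zu_i$ that the paper extracts by comparing its two inductive decompositions --- and appealing to $xz=yw$ does not supply it.

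The divisibility does hold, and you have already displayed the identity that proves it; it just needs to be pushed through. From $zu = x^{n-1}\hat t_0^{(y=0)} + \sum_{i=1}^{n-1}x^{n-1-i}w^i\hat t_i$: every basis monomial of the left side has $z$-degree at least one; by the inductive property (v) the right-hand summands have pairwise disjoint supports; and by the inductive bidegree bound (iv) any $z$-degree-zero monomial of some $\hat t_i$ would contribute a \emph{nonzero} basis monomial to the right side, which the left side cannot carry. So those coefficients vanish, giving $\hat t_0^{(y=0)}=zt_1$ and $\hat t_i=zt_{i+1}$ as polynomials. After that, the passage from the inductive decompositions of $p_1,\dotsc,p_n$ to the target decompositions is pure substitution and re-indexing --- there is no ``$x$-multiplication'' and no $xz=yw$ mediation --- and the decomposition of $p_0$ follows from $xs=x^nt_0$ and $wu=\sum_{i\geq1}x^{n-i}w^it_i$ via the uniqueness of $s,u$ modulo $\Ann_{x,y},\Ann_{z,w}$ from Lemma~\ref{l:mindingPsAndQs}. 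With this argument supplied and the spurious appeals to $xz=yw$ removed, your induction closes and gives a genuine alternative to the paper's two-sided version.
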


\begin{proof}
  Lemma \ref{l:mindingPsAndQs} handles the case when $n=1$.  When
  $n=2=n_1$, applying Lemma \ref{l:mindingPsAndQs} to the pairs
  $p_0,p_1$ and $p_1,p_2$ yields
  \[
  p_0 = p_0' +wr_w, \qquad p_1 = p_1' + zr_w = p_1' +w\rho_w, \qquad
  p_2 = p_2' +z\rho_w;
  \]
  here $r_y = \rho_y = 0$ follows from Lemma
  \ref{l:mindingPsAndQs}(ii) and $r_w$, $\rho_w$ are polynomials in
  $z,w$.  Write $r_w = r_{w,0} + wr_{w,+}$, where $r_{w,0}$ is a
  polynomial in $z$ and $r_{w,+}$ is a polynomial in $z,w$.  Comparing
  $w$-terms in $p_1$, we find
  \[
  zr_{w,0} = 0 \qquad\text{and}\qquad wzr_{w,+} = w\rho_w.
  \]
  As $zr_{w,0}=0$ if and only if $wr_{w,0}=0$, we may assume $r_{w,0}
  = 0$.  Let $u_0 = u_1 = 0$ and $u_2 = r_{w,+}$.  This implies
  \[
  p_0 = p_0' +w^2u_2, \qquad p_1 = p_1' + zwu_2, \qquad p_2 = p_2'
  +z^2u_2,
  \]
  giving the desired expressions; properties (i)--(v) can easily be
  verified in this case.  (If in addition $n_2 = 2$, then we take
  $u_2=0$.)  A similar proof works whenever $n=2$, where $r_y, \rho_y
  \neq 0$ are allowed if $n_1>2$.  In these cases, starting with
  \[
  p_0 = p_0' +xr_y +wr_w, \qquad p_1 = p_1' +yr_y +zr_w = p_1'
  +x\rho_y +w\rho_w, \qquad p_2 = p_2' +y\rho_y +z\rho_w
  \]
  and additionally writing $\rho_y = \rho_{y,0} + y\rho_{y,+}$, we
  find the desired expressions
  %% \[
  %% p_0 = p_0' +xwu_1 +w^2u_2, \qquad p_1 = p_1' +xzu_1 +zwu_2, \qquad
  %% p_2 = p_2' +yzu_1 +z^2u_2, 
  %% \]
  %% if $n_1 = 3$, and
  \begin{align*}
    p_0 &= p_0' +x^2u_0 +xwu_1 +w^2u_2, \\ p_1 &= p_1' +xyu_0 +xzu_1
    +zwu_2, \\ p_2 &= p_2' +y^2u_0 +yzu_1 +z^2u_2.
  \end{align*}
  %% if $n_1 \ge 4$,
  Here we take $u_i=0$ if the resulting term would be $0$ or land in
  $\Ann_{x,y}$.
  
  Now assume $n>2$. Considering the tuples $(p_0,\dots,p_{n-1})$ and
  $(p_1,\dots,p_n)$, by induction, we have $t_i$ and $\tau_i$
  satisfying properties (i)--(v) (with $n$ replaced by $n-1$) and such
  that
  \[
  p_k=p'_k+\sum_{i=0}^kx^{n-1-k}y^{k-i}z^it_i +
  \sum_{i=k+1}^{n-1}x^{n-1-i}z^kw^{i-k}t_i
  \]
  for $k<n$, and
  \[
  p_k = p'_k + \sum_{i=0}^{k-1}x^{n-k}y^{k-i-1}z^i\tau_i +
  \sum_{i=k}^{n-1}x^{n-1-i}z^{k-1}w^{i-k+1}\tau_i
  \]
  for $k>0$. Since the basis vectors appearing in $p_k$ that are in
  the support of $\Ann_{x,y}$ are uniquely determined, the $p'_k$
  terms are the same in the two expressions for $p_k$.
  
  %  \matt{move the bidegree discussion to here. Key points: in all
  %  $p_k$ expressions, the $t_i$ term is multiplied by monomial of
  %  same bidegree, so if necessary (probably shouldn't be necessary),
  %  can cut off the extra terms in $t_i$ since it doesn't affect the
  %  expression $x^{n-1-k}y^{k-i}z^it_i$. Also, by assumption about
  %  $p_k-p'_k$ supported away from ann, we know $t_i$ can't contain
  %  any terms of bidegree gettign you to multipy into ann}

  %% For $1<k<n-1$, by comparing the two expressions for the
  %% $y^0w^0$-terms in $p_k$, we find
  %% \[
  %% x^{n-1-k}z^kt_k=x^{n-k}z^{k-1}\tau_{k-1}.
  %% \]
  %% By our assumption about $p'_k$ having disjoint support from the rest
  %% of the terms, we see that $x^{n-1-k}z^kt_k$ vanishes \andrew{What if
  %%   $k \ge n_2$?}  if and only if $t_k$ vanishes; similarly for
  %% $\tau_{k-1}$.  \andrew{I found this line confusing, because $t_i$
  %%   appears in all $p_k$, and I didn't see why it couldn't have a term
  %%   that say survives in one $p_k$ but is killed in some other
  %%   $p_{\ell}$.  But now I think that's not a problem, because $t_i$,
  %%   for fixed $i$, always gets multiplied whenever it appears by a
  %%   monomial of the same bidegree.  I'm not sure it's worth it, but we
  %%   could expand these sentences to something like:}

  For each $1 < i < n-1$, comparing the two expressions for the
  $y^{k-i}$- or $w^{i-k}$-terms, we have
  \begin{equation}
    \label{workingOutBidgs1}
    x^{n-1-k}y^{k-i}z^it_i = x^{n-k}y^{k-i}z^{i-1}\tau_{i-1},
    \quad\text{ if } i\le k,
  \end{equation}
  \begin{equation}
    \label{workingOutBidgs2}
    x^{n-1-i}z^kw^{i-k}t_i = x^{n-i}z^{k-1}w^{i-k}\tau_{i-1},
    \quad\text{ if } i> k,
  \end{equation}
  for all $0<k<n$.  
  %Thus, in the expansion of each $p_k$, $t_i$ is multiplied by a
  %bidegree $(n-1-i,i)$ monomial.  If $i\ge n_2$, then we may assume
  %$t_i=0$.  Otherwise, we can assume that all elements in the support
  %of $t_i$ have bidegrees $(j_1,j_2)$ with $j_1 < n_1-n
  %+1+i$\matt{watch out for off by 1s} and $j_2 < n_2-i$ and thus
  %contribute nonzero terms to the expansion of each $p_k$.  A similar
  %thing occurs for $\tau_{i-1}$, which is always multiplied by a
  %bidegree $(n-i,i-1)$ monomial.
  By our inductive assumption on the bidegrees of $t_i$ and
  $\tau_{i-1}$, if $i<n_2$, then none of the terms appearing in
  \eqref{workingOutBidgs1} or \eqref{workingOutBidgs2} is zero, and
  hence, there exists a polynomial $u_i$ in $x,z$ such that
  \[
  t_i=xu_i\quad\textrm{and}\quad \tau_{i-1}=zu_i;
  \]
  observe that any monomial in the support of $u_i$ has bidegree
  $(j_1,j_2)$ with $0\le j_1<n_1-1-n+i$ and $0\le j_2<n_2-i$. If, on
  the other hand, $i>n_2$, then both $t_i$ and $\tau_{i-1}$ vanish, so
  we may take $u_i=0$. Finally, if $i=n_2$, then $t_i=0$; by our
  assumption on the bidegree of $\tau_{i-1}$, the only way
  \eqref{workingOutBidgs1} or \eqref{workingOutBidgs2} can hold is if
  $\tau_{i-1}=0$ as well, so we may take $u_i=0$.

  Let
  \[
  \tau_0=\tau_{0,0}+y\tau_{0,+},
  \]
  where $\tau_{0,0}$ is a polynomial in $x,z$ and $\tau_{0,+}$ is a
  polynomial in $x,y,z$.  Similarly, let
  \[
  t_{n-1}=t_{n-1,0}+wt_{n-1,+},
  \]
  where $t_{n-1,0}$ is a polynomial in $x,z$ and $t_{n-1,+}$ is a
  polynomial in $x,z,w$.

  Next, by comparing the $y^0w^0$-terms in the expression for $p_1$,
  we see $x^{n-2}zt_1=x^{n-1}\tau_{0,0}$.  If $n=n_1$, then we take
  $u_1 = 0$.  Otherwise, by our assumptions on the bidegrees of $t_1$
  and $\tau_0$, no terms in the two sides of the equation are zero, so
  there exists a polynomial $u_1$ in $x,z$ such that
  \[
  t_1=xu_1\quad\textrm{and}\quad \tau_{0,0}=zu_1.
  \]
  We then see that any monomial in the support of $u_1$ has bidegree
  $(j_1,j_2)$ with $0\le j_1<n_1-n$ and $0\le j_2<n_2-1$.  Comparing
  the terms in $p_1$ with a power of $y$, we see
  \[
  x^{n-2}yt_0=x^{n-1}y\tau_{0,+}.
  \]
  Since $\Ann(x)=\Ann(y)$, this implies
  \[
  x^{n-1-\ell}y^\ell t_0=x^{n-\ell}y^\ell \tau_{0,+},
  \]
  for all $0\leq\ell\leq n-1$. If $n \ge n_1-1$, then we take $u_0 =
  0$. Otherwise, any monomial in the support of $\tau_{0,+}$ has
  bidegree $(j_1,j_2)$ with $0\le j_1<n_1-1-n$ and $0\le j_2<n_2$.
  
  Next, comparing the $w^{n-2}$-terms in $p_1$ yields
  $zw^{n-2}t_{n-1,0}=w^{n-2}x\tau_{n-2}$. Arguing in the same manner
  as we did with \eqref{workingOutBidgs1} and
  \eqref{workingOutBidgs2}, we see
  \[
  t_{n-1,0}=xu_{n-1} \quad\textrm{and}\quad \tau_{n-2}=zu_{n-1},
  \]
  for some polynomial $u_{n-1}$ in $x,z$ where $u_{n-1}=0$, if
  $n-1\geq n_2$. Comparing the $w^j$-terms in $p_1$ with $j\geq n-1$,
  we have $zw^{n-1}t_{n-1,+}=w^{n-1}\tau_{n-1}$, and since
  $\Ann(z)=\Ann(w)$, we have
  \[
  z^{\ell+1}w^{n-1-\ell}t_{n-1,+}=w^{n-1-\ell}z^\ell \tau_{n-1},
  \]
  for $0\leq\ell\leq n-1$.

  Let
  \[
  u_0=\tau_{0,+} \quad\textrm{and}\quad u_n=t_{n-1,+}.
  \]
  %By induction on $\tau_{0}$, the $xy$-degree of $u_0$ is less than
  %$n_1-(n-1)+0-1 = n_1-n+0$ and its $zw$-degree is $n_2-0$.  By
  %induction on $t_{n-1}$, the $xy$-degree of $u_{n}$ is less than
  %$n_1-(n-1)+(n-1) = n_1-n+n$ and the $zw$-degree is less than
  %$n_2-(n-1)-1 = n_2-n$.
  For $k<n$, using that $t_i=xu_i$ for $1\leq
  i\leq n-2$, we see
  \[
  p_k =p'_k+x^{n-1-k}y^kt_0 + \sum_{i=1}^k x^{n-k}y^{k-i}z^iu_i +
  \sum_{i=k+1}^{n-2}x^{n-i}z^kw^{i-k}u_i+z^kw^{n-1-k}t_{n-1}.
  \]
  Next, we have
  \[
  z^kw^{n-1-k}t_{n-1} = z^kw^{n-1-k}t_{n-1,0}+z^kw^{n-k}t_{n-1,+} =
  z^kw^{n-1-k}xu_{n-1}+z^kw^{n-k}u_n.
  \]
  Combining this with the fact that $x^{n-1-k}y^kt_0 =
  x^{n-k}y^k\tau_{0,+} = x^{n-k}y^ku_0$, we see
  \[
  p_k = p'_k + \sum_{i=0}^k x^{n-k}y^{k-i}z^iu_i + \sum_{i=k+1}^n
  x^{n-i}z^kw^{i-k}u_i,
  \]
  which is the desired expression for $p_k$ with $k<n$.

  For $k=n$, we have
  \begin{align*}
    p_n &=p'_n+\sum_{i=0}^{n-1}y^{n-1-i}z^i\tau_i\\ 
    &=p'_n+y^{n-1}(\tau_{0,0}+y\tau_{0,+}) +
    \sum_{i=1}^{n-1}y^{n-1-i}z^i(zu_{i+1})\\    
    &=p'_n+y^{n-1}(zu_1+yu_0)+\sum_{i=2}^{n}y^{n-i}z^iu_i\\
    &=p'_n+\sum_{i=0}^{n}y^{n-i}z^iu_i,
  \end{align*}
  which is the desired expression.
  
  We have now shown that $u_0, u_1, \dotsc, u_n$ satisfy properties
  (i)--(iv). For (v), let $0\le k<n$. If $0\leq i\leq k$, then
  $x^{n-k}y^{k-i}z^iu_i$ and $x^{n-1-k}y^{k-i}z^it_i$ have the same
  support, and if $i>k$, then $x^{n-i}z^kw^{i-k}u_i$ and
  $x^{n-1-i}z^kw^{i-k}t_i$ have the same support; furthermore, the
  support of $z^{k}w^{n-1-k}t_{n-1}$ is partitioned into the supports
  of $xz^kw^{n-1-k}u_{n-1}$ and $z^kw^{n-k}u_n$ for $k<n-1$, and
  similarly for $k=n-1$.  When $k=n$, $y^{n-i}z^iu_i$ and
  $y^{n-1-i}z^i\tau_{i-1}$ have the same support for $i\ge 2$, while
  the support of $y^{n-1}\tau_0$ partitions into the support of
  $y^{n-1}zu_1$ and the support of $y^nu_0$.  Hence, property (v)
  holds too.
\end{proof}

\subsection{Proof of Proposition \ref{prop:Negative-tangents-for-I}}
\label{subsec:propPf}

%% Let \[ \cB' := \{ (u_1, u_2, u_3, u_4) \in \NN^4 \mid u_1+u_2 <
%% n_1,\ u_3+u_4< n_2,\ u_2u_4 = 0 \}, \] and for
%% $\bu=(u_1,u_2,u_3,u_4)\in\cB'$, let $\bx^{\bu} :=
%% x^{u_1}y^{u_2}z^{u_3}w^{u_4}$; thus, $\cB=\{\bx^{\bu} \mid \bu \in
%% \cB'\}$. We denote \[ \varphi(x^{n_1-i}y^i) := \sum_{\bu \in \cB}
%% a_{\bu}^{(i)} \bx^{\bu}+ I, \;\; \varphi(z^{n_2-j}y^j) := \sum_{\bu
%% \in \cB} b_{\bu}^{(j)} \bx^{\bu}+ I, \;\; \text{and} \;\;
%% \varphi(xz-yw) := \sum_{\bu \in \cB} c_{\bu} \bx^{\bu}+ I, \] where
%% $0 \le i \le n_1, 0 \le j \le n_2$.
Recall that $\varphi \in \Hom_S(I,S/I)$ is a tangent vector.  The
\emph{\bfseries trivial tangents} are the tangent vectors
corresponding to the homomorphisms $\partial_x, \partial_y,
\partial_z, \partial_w$, where
\[
\partial_x(f) := \frac{\partial f}{\partial x} + I, \text{ for } f \in I,
\]
and $\partial_y, \partial_z, \partial_w$ are defined analogously.  As
$I$ is homogeneous, the module $\Hom_S(I, S/I)$ inherits the grading
(and the bigrading), so that
\[
\Hom_S(I, S/I) = \bigoplus_{i \in \ZZ} \Hom_S(I, S/I)_i
\]
with $\Hom_S(I, S/I)_i = \{ \varphi \in \Hom_S(I, S/I) \mid
\varphi(I_j) \subseteq (S/I)_{i+j}, \text{ for all } j \in \NN \}$
(and similarly for the bigrading).  The trivial tangents have degree
$-1$ in the standard grading.

Let us assume that $\varphi$ is graded of degree $j < 0$.  Let
\[
p_k := \varphi(x^{n_1-k}y^k),
\]
for all $0 \le k \le n_1$.  Relation \eqref{syz:xy} says that $yp_k =
xp_{k+1}$, for all $0 \le k < n_1$. Corollary
\ref{cor:mindingPsAndQsInGeneral} then applies to these values of
$\varphi$, with $n = n_1$, giving expressions
\begin{align*}
  p_0 &= p'_0 + \sum_{i=2}^{n_1} x^{n_1-i}z^kw^{i-k}t_i
  \quad\text{and} \\
  p_k &= p'_k + \sum_{i=2}^k x^{n_1-k}y^{k-i}z^it_i +
  \sum_{i=k+1}^{n_1} x^{n_1-i}z^kw^{i-k}t_i, \quad\text{for } 0<k\le
  n_1
\end{align*}
($i=0$ gives a zero term and $i=1$ gives a term in $\Ann_{x,y}$).
Observe that the degrees of all of the terms $x^{n_1-k}y^{k-i}z^it_i$
and $x^{n_1-i}z^kw^{i-k}t_i$ equal $n_1+|t_i| \ge n_1$.  Because
$j<0$, we must then have $t_i = 0$ for all $i$, by Corollary
\ref{cor:mindingPsAndQsInGeneral}(v).  Moreover, as $p_k' \in
\Ann_{x,y}$, any nonzero term of $p_k'$ must have degree at least
$n_1-1$.  This implies $p_k'$, and hence $p_k$, is zero, if $j<-1$.
By symmetry, $\varphi(z^{n_2-\ell}w^{\ell})$ is also zero, if $j<-1$.
Relation \eqref{syz:qxy} then implies that $\varphi(xz-yw) = 0$, if
$j<-1$.  This shows that $\Hom_S(I,S/I)_{j} = 0$, for $j<-1$.

Suppose that $j = -1$.  We still know that all $t_i = 0$ and so $p_k =
p_k'$, for all $k$.  Thus, we now have expressions
\[
p_k = \sum_{0 \le i < n_1} a^{(k)}_i x^{n_1-1-i}y^i +I,
\]
where each $a^{(k)}_i \in \kk$, by Lemma
\ref{l:decomposition-of-polynomials}.  Also, we have
\[
\varphi(xz-yw) = c_0x + c_1y + c_3z + c_4w +I,
\]
where all $c_i \in \kk$.

Proposition \ref{prop:Negative-tangents-for-I} now reduces to the
following:

\begin{proposition}
  \label{prop:tnt1}
  Any $S$-linear map $\varphi \colon I \to S/I$ of degree $-1$ is a
  $\kk$-linear combination of the trivial tangents $\partial_x,
  \partial_y, \partial_z, \partial_w$.
\end{proposition}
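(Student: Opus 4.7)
The plan is to force $\varphi$ to equal the combination $c_3\partial_x - c_4\partial_y + c_0\partial_z - c_1\partial_w$ of trivial tangents by exploiting the syzygies of the generators of $I$. Relations \eqref{syz:xy} and \eqref{syz:zw} carry no information in degree $-1$: since $p_k$ lies in the $\kk$-span of $x^{n_1-1-i}y^i$, both $yp_k$ and $xp_{k+1}$ land in $\llrr{x,y}^{n_1}\subset I$ and vanish, and symmetrically for \eqref{syz:zw}. Invoking the $(x,y,n_1)\leftrightarrow(z,w,n_2)$ symmetry that preserves $xz-yw$, the analogue of Corollary~\ref{cor:mindingPsAndQsInGeneral} applied to \eqref{syz:zw} forces $q_\ell := \varphi(z^{n_2-\ell}w^\ell) = \sum_{0\le j<n_2}b^{(\ell)}_j z^{n_2-1-j}w^j + I$, so the remaining content lies entirely in \eqref{syz:qxy} together with its mirror \eqref{syz:qzw}.

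The central computation is to expand both sides of \eqref{syz:qxy} in the basis $\cB$ using $yw\equiv xz\pmod I$, which gives $y^iw\equiv y^{i-1}xz\pmod I$ for every $i\ge1$. On the left, the $c_0x$ and $c_1y$ contributions in $\varphi(xz-yw)$ get absorbed into $\llrr{x,y}^{n_1}$, leaving $c_3 x^{n_1-1-k}y^kz + c_4 x^{n_1-k}y^{k-1}z$ when $k\ge1$ and the boundary form $c_3 x^{n_1-1}z + c_4 x^{n_1-1}w$ when $k=0$. On the right, applying the substitution to $wp_{k+1}$ yields
\[
zp_k - wp_{k+1} = -a^{(k+1)}_0\, x^{n_1-1}w + \sum_{i=0}^{n_1-2}\bigl(a^{(k)}_i - a^{(k+1)}_{i+1}\bigr)\, x^{n_1-1-i}y^iz + a^{(k)}_{n_1-1}\, y^{n_1-1}z.
\]
Matching coefficients in $\cB$ across $k=0,\dotsc,n_1-1$ produces the boundary anchors $a^{(1)}_0 = -c_4$ (from the $x^{n_1-1}w$ term at $k=0$) and $a^{(n_1-1)}_{n_1-1} = c_3$ (from the $y^{n_1-1}z$ term at $k=n_1-1$), the vanishings $a^{(j)}_0=0$ for $j\ge2$ and $a^{(k)}_{n_1-1}=0$ for $k\le n_1-2$, the diagonal recurrences $a^{(k)}_k - a^{(k+1)}_{k+1} = c_3$ and $a^{(k)}_{k-1} - a^{(k+1)}_k = c_4$, and the propagation identities $a^{(k)}_i = a^{(k+1)}_{i+1}$ whenever $i\notin\{k-1,k\}$.

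The remaining step is combinatorial: iterating propagation along each diagonal $i-k=\text{const}$, every line with $i>k$ terminates at some top-boundary vanishing $a^{(k')}_{n_1-1}=0$ (with $k'\le n_1-2$), and every line with $i<k-1$ terminates at some bottom-boundary vanishing $a^{(k')}_0=0$ (with $k'\ge2$). Hence $a^{(k)}_i = 0$ off the main and sub-diagonals, and the two recurrences anchored at the boundary values give $a^{(k)}_k = (n_1-k)c_3$ and $a^{(k)}_{k-1} = -kc_4$. Applying the identical analysis to \eqref{syz:qzw} under the symmetry yields $b^{(\ell)}_\ell = (n_2-\ell)c_0$, $b^{(\ell)}_{\ell-1} = -\ell c_1$, and $b^{(\ell)}_j = 0$ for $j\notin\{\ell-1,\ell\}$. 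A direct computation then shows that the trivial tangent $c_3\partial_x - c_4\partial_y + c_0\partial_z - c_1\partial_w$ sends the generators $x^{n_1-k}y^k$, $z^{n_2-\ell}w^\ell$, and $xz-yw$ to exactly these values, completing the proof. The main obstacle is the central coefficient-matching step, where the index shift $y^iw\equiv y^{i-1}xz$ and the three regimes of $k$ (namely $k=0$, $1\le k\le n_1-2$, and $k=n_1-1$) must be tracked consistently against $\cB$ so that the $x^{n_1-1}w$ and $y^{n_1-1}z$ coefficients furnish the crucial anchors that turn the otherwise-underdetermined diagonal recurrences into explicit closed-form values.
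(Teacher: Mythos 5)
Your proof is correct and follows the same approach as the paper's: establish the shape of the degree $-1$ values of $\varphi$ (via Corollary~\ref{cor:mindingPsAndQsInGeneral}), expand relation \eqref{syz:qxy} modulo $yw\equiv xz$, match coefficients in the basis $\cB$ across the three regimes of $k$, and chase the resulting propagation identities along diagonals to the boundary anchors---exactly the recurrences the paper writes out explicitly. One phrasing caveat: saying \eqref{syz:xy}, \eqref{syz:zw} ``carry no information'' is slightly misleading, since Corollary~\ref{cor:mindingPsAndQsInGeneral} applied to \eqref{syz:xy} is precisely what forces $p_k$ into the $\kk$-span of the $x^{n_1-1-i}y^i$ in the first place (just as you invoke it for the $q_\ell$); once that form is extracted the relations are saturated, not vacuous.
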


\begin{proof}
  Relation \eqref{syz:qxy} can now be written
  \begin{align*}
    c_{3} x^{n_1-1-k}y^k z + c_{4} x^{n_1-1-k}y^k w + I &= \sum_{0 \le
      i < n_1} a_{i}^{(k)} x^{n_1-1-i} y^{i} z - \sum_{0\le i<n_1}
    a_{i}^{(k+1)} x^{n_1-1-i} y^{i} w + I \\
    &= \sum_{0\le i< n_1} a_{i}^{(k)} x^{n_1-1-i} y^{i} z -
    a_{0}^{(k+1)} x^{n_1-1} w \\
    &\qquad \qquad \qquad \qquad - \sum_{0<i<n_1} a_{i}^{(k+1)}
    x^{n_1-i} y^{i-1} z + I,
  \end{align*}
  as $yw + I = xz + I$.  

  When $k = 0$, this becomes
  \begin{align*}
    c_{3} x^{n_1-1} z + c_{4} x^{n_1-1} w + I &= \sum_{0\le i<n_1}
    a_{i}^{(0)} x^{n_1-1-i} y^{i} z - a_{0}^{(1)} x^{n_1-1} w \\
    &\qquad \qquad \qquad \qquad - \sum_{0<i<n_1} a_{i}^{(1)}
    x^{n_1-i} y^{i-1} z + I,
  \end{align*}
  which implies $c_3 = a_{0}^{(0)} - a_{1}^{(1)}$, $c_4 =
  -a_{0}^{(1)}$, $a_{n_1-1}^{(0)} = 0$, and $a_{i}^{(0)} =
  a_{i+1}^{(1)}$, for all $0<i<n_1-1$.  When $0 < k < n_1 -1$, this
  becomes
  \begin{align*}
    c_{3} x^{n_1-1-k}y^k z + c_{4} x^{n_1-k}y^{k-1} z + I &=
    \sum_{0\le i< n_1} a_{i}^{(k)} x^{n_1-1-i} y^{i} z - a_{0}^{(k+1)}
    x^{n_1-1} w \\
    &\qquad \qquad \qquad \qquad - \sum_{0<i<n_1} a_{i}^{(k+1)}
    x^{n_1-i} y^{i-1} z + I,
  \end{align*}
  which shows $c_3 = a_{k}^{(k)} - a_{k+1}^{(k+1)}$, $c_4 =
  a_{k-1}^{(k)} - a_{k}^{(k+1)}$, $a_{n_1-1}^{(k)} = a_{0}^{(k+1)} =
  0$, and $a_{i}^{(k)} = a_{i+1}^{(k+1)}$, for all the remaining
  coefficients.  And when $k = n_1 -1$, this becomes
  \begin{align*}
    c_{3} y^{n_1-1} z + c_{4} x y^{n_1-2} z + I &= \sum_{0\le i< n_1}
    a_{i}^{(n_1-1)} x^{n_1-1-i} y^{i} z - a_{0}^{(n_1)} x^{n_1-1} w \\
    &\qquad \qquad \qquad \qquad - \sum_{0<i<n_1} a_{i}^{(n_1)}
    x^{n_1-i} y^{i-1} z + I,
  \end{align*}
  showing that $c_3 = a_{n_1-1}^{(n_1-1)}$, $c_4 = a_{n_1-2}^{(n_1-1)}
  - a_{n_1-1}^{(n_1)}$, $a_{0}^{(n_1)} = 0$, and $a_{i}^{(n_1-1)} =
  a_{i+1}^{(n_1)}$, for all the remaining coefficients.  This shows
  that
  \[
  c_3 = a_{0}^{(0)} - a_{1}^{(1)} = a_{1}^{(1)} - a_{2}^{(2)} = \dotsb
  = a_{n_1-2}^{(n_1-2)} - a_{n_1-1}^{(n_1-1)} = a_{n_1-1}^{(n_1-1)}
  \]
  and
  \[
  c_4 = -a_{0}^{(1)} = a_{0}^{(1)} - a_{1}^{(2)} = \dotsb =
  a_{n_1-3}^{(n_1-2)} - a_{n_1-2}^{(n_1-1)} = a_{n_1-2}^{(n_1-1)} -
  a_{n_1-1}^{(n_1)},
  \]
  while the remaining coefficients $a_{k}^{(j)}$ vanish.  Letting $a
  := c_3$ and $a' := -c_4$, this yields
  \begin{align*}
    \varphi(x^{n_1}) &= a n_1 x^{n_1-1} + I, \\
    \varphi(x^{n_1-k}y^k) &= a (n_1-k) x^{n_1-k-1}y^k + a' k
    x^{n_1-k}y^{k-1} + I, \quad\text{for } 0<k<n_1 \\
    \varphi(y^{n_1}) &= a' n_1 y^{n_1-1} + I, \text{ and }
    \\
    \varphi(xz-yw) &= c_{1}x + c_{2}y + az - a'w + I.
  \end{align*}
  Adjusting the argument for the remaining generators of $I$ yields
  \begin{align*}
    \varphi(z^{n_2}) &= b n_2 z^{n_2-1} + I, \\
    \varphi(z^{n_2-\ell}w^{\ell}) &= b (n_2-\ell)
    z^{n_2-\ell-1}w^{\ell} + b' \ell z^{n_2-\ell}w^{\ell-1} + I,
    \quad\text{for } 0<\ell<n_1 \\
    \varphi(w^{n_2}) &= b' n_2 w^{n_2-1} + I, \text{ and }
    \\
    \varphi(xz-yw) &= bx - b'y + az - a'w + I,
  \end{align*}
  where $b := c_1$ and $b' = - c_2$.  Hence, we find $\varphi =
  a\partial_x + a'\partial_y + b\partial_z + b'\partial_w$, as
  desired.
\end{proof}

%% \begin{remark}
%%   Perhaps it is worth reviewing the overall structure of our
%%   %% elementary
%%   argument to show that $I$ has trivial negative tangents.  Namely, to
%%   see that the tangent space $\Hom_S(I, S/I)$ vanishes in sufficiently
%%   negative degrees (less than $-1$ here) it is (essentially) only
%%   necessary to examine relations (\ref{syz:xy}) and (\ref{syz:zw}).
%%   Then, to see that $\Hom_S(I, S/I)_{-1}$ is exactly the $\kk$-span of
%%   the trivial tangent vectors, we rely on relations (\ref{syz:qxy})
%%   and (\ref{syz:qzw}).
%%   %% As almost all known elementary components of
%%   %% $\hilb^{pts}(\bbA^d)$ are intimately related to the
%%   %% \emph{Iarrobino--Emsalem component}---containing the smooth point
%%   %% $\llrr{x,y}^2 + \llrr{z,w}^2 + \llrr{q}$---we are led to wonder
%%   %% how crucial a ``mixed'' quadratic $q$ is to the existence of such
%%   %% components.
%% \end{remark}

%% Proposition \ref{prop:tnt1} implies that every irreducible component
%% of $\hilb^{pts}(\bbA^4)$ containing $[I]$ must be elementary, by
%% \cite[Theorem 1.2]{Jelisiejew--2019}.  In the next section, we show
%% moreover that $[I]$ is a smooth point of the Hilbert scheme.

%% In the next section, we show moreover that $[I]$ is a smooth point of
%% the Hilbert scheme.

This demonstrates that $I$ only has trivial negative tangents and
finishes the proof of Proposition \ref{prop:Negative-tangents-for-I}.
% --------------------------------------------------------------------

% --------------------------------------------------------------------
% Section -- Vanishing Nonnegative T2 for the first family
% --------------------------------------------------------------------
\section{Vanishing Nonnegative Obstruction Spaces, I}
\label{sec:obs1}

Continuing with the notation from \S\ref{sec:tnt1}, our goal in this
section is to prove the following:

\begin{proposition}
  \label{prop:IisSmooth}
  $[I]\in \hilb^{d}(\bbA^4)$ is a smooth point.
\end{proposition}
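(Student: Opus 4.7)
The plan is to invoke Corollary \ref{cor:vanishing-T2}: since $I$ is $\fmm$-primary and Proposition \ref{prop:Negative-tangents-for-I} has already established that $I$ has trivial negative tangents, it remains only to prove that $T^2(A/\kk, A)_{\ge 0} = 0$, where $A = S/I$. This reduces the smoothness of $[I]$ to an obstruction-space computation that is, in spirit, one step further along the cotangent complex than the tangent-space computation of \S\ref{sec:tnt1}.

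I would compute $T^2$ using the tautological presentation: take $R = S$ and let $F$ be the free $S$-module whose basis $\{e^{xy}_k, e^{zw}_\ell, e^q\}$ is indexed by the generators $x^{n_1-k}y^k$, $z^{n_2-\ell}w^\ell$, $xz-yw$ of $I$, so that $Q = \ker(F \twoheadrightarrow I)$ is the first syzygy module. Then $T^2(A/\kk, A)$ is the cokernel of the natural map
\[
\Hom_A(F \otimes_S A, A) \longrightarrow \Hom_A(Q/\Kos, A).
\]
The first technical step is to show that modulo Koszul relations, $Q$ is generated by the explicit syzygies \eqref{syz:xy}--\eqref{syz:qzw}. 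I expect this to follow from a direct calculation based on the tensor-like structure of $\llrr{x,y}^{n_1} + \llrr{z,w}^{n_2}$ together with the single mixed quadric $xz-yw$. Granting this, a graded element $\psi \in \Hom_A(Q/\Kos, A)_{j}$ with $j \ge 0$ is recorded by four $A$-valued families giving the values of $\psi$ on the syzygies \eqref{syz:xy}--\eqref{syz:qzw}, subject to the constraints imposed by the second syzygies on these relations.

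To exhibit such a $\psi$ as a coboundary, I would mirror the strategy of \S\ref{subsec:propPf}: construct candidate liftings $\varphi(e^{xy}_k), \varphi(e^{zw}_\ell), \varphi(e^q) \in A$ whose pairings with \eqref{syz:xy}--\eqref{syz:qzw} reproduce the prescribed values of $\psi$. The key input is that the sequence $\{\psi(\text{\eqref{syz:xy}}_k)\}_k$ satisfies relations in $A$ of exactly the form handled by Corollary \ref{cor:mindingPsAndQsInGeneral} (and symmetrically for $\{\psi(\text{\eqref{syz:zw}}_\ell)\}_\ell$), so the decomposition lemmas of \S\ref{subsec:prelimLs-tngt-sp-dim} let me canonically extract the components that must become $\varphi(e^{xy}_k)$ and $\varphi(e^{zw}_\ell)$; the remaining relations \eqref{syz:qxy}--\eqref{syz:qzw} then force $\varphi(e^q)$ up to the image of a tangent map, which is exactly what coboundary-ness requires.

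The main obstacle will be the degree-$0$ piece. In strictly positive degree, the bidegree bounds of Lemma \ref{l:decomposition-of-polynomials} and Corollary \ref{cor:mindingPsAndQsInGeneral} squeeze enough monomials out of $A$ that one can solve the linear system and even show many $\psi$ vanish outright. In degree $0$, by contrast, the monomials of $A$ match the syzygies exactly, so one must use compatibility between the four families of values of $\psi$ --- mediated by the mixed second syzygies that couple the $\llrr{x,y}$-relations, the $\llrr{z,w}$-relations, and the quadric $xz-yw$ --- to glue the locally defined $\varphi$-values into a single consistent assignment. The corner cases $k \in \{0, n_1\}$ and $\ell \in \{0, n_2\}$ will, as in \S\ref{sec:tnt1}, require separate bookkeeping, and this degree-$0$ gluing is where I expect essentially all of the real work to lie.
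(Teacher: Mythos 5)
Your high-level framing is exactly the paper's: invoke Corollary \ref{cor:vanishing-T2}, reduce to showing $T^2(A/\kk,A)_{\ge 0} = 0$, present $L_2 = Q/\Kos \cong \cF_2/\Kos'$ as generated (modulo Koszul) by the syzygies \eqref{syz:xy}--\eqref{syz:qzw} (this is the paper's Lemma~\ref{lem:L2gens}), and then exhibit any nonnegative-degree $\psi \colon L_2 \to A$ as a coboundary. However, the central mechanism you propose does not work, and it is not the one the paper uses.

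The gap is in the sentence claiming that the values $P_k := \psi(x^{n_1-k}y^k; x)$ ``satisfy relations in $A$ of exactly the form handled by Corollary \ref{cor:mindingPsAndQsInGeneral},'' i.e.\ $yP_k = xP_{k+1}$. That identity would require $y\,(x^{n_1-k}y^k;x) - x\,(x^{n_1-k-1}y^{k+1};x)$ to vanish in $L_2 = \cF_2/\Kos'$, and it does not. Applying $d_2^{\cF}$ gives $y^2(x^{n_1-k+1}y^{k-1};) - 2xy(x^{n_1-k}y^k;) + x^2(x^{n_1-k-1}y^{k+1};)$, which is nonzero in $\cF_1$ (so the element is not in $\ker d_2^{\cF}$) and, for $n_1 > 2$, has degree $n_1+2$ while any Koszul relation among the $(x^{n_1-i}y^i;)$ has degree $2n_1$ and any Koszul relation involving $(q;)$ carries a nonzero $(q;)$-component; hence the element is not in $\Kos'$. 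So the hypothesis of Corollary \ref{cor:mindingPsAndQsInGeneral} fails, and the decomposition you hoped to extract from it is not available.

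What is available from $\Kos'$ is what the paper records in Lemma~\ref{lem:T2helper}: (i) all terms of $(P_k)_{0,j}$ are divisible by $x$ (from $x^{n_1-k}y^{k-1}(x^{n_1-k}y^k;x) \in \Kos'$), and (ii) $xQ_k = wP_k$, $yQ_k = zP_k$ where $Q_k := \psi(q;x^{n_1-k}y^{k-1})$ (from $x(q;x^{n_1-k}y^{k-1}) - w(x^{n_1-k}y^k;x) \in \Kos'$ and its sibling). These constraints couple $P$-values to $Q$-values rather than $P$-values to each other, which dictates a different solution strategy: the paper splits into the case where all $P_k = 0$ (so the $Q_k$ land in $\Ann_{x,y}$, forcing bidegree $(-1,d_2)$, and one solves an explicit linear system \eqref{ddagger} with $\rho = 0$) and the case where some $P_\ell \ne 0$ (forcing $d_1 \ge 2$, solving a linear system \eqref{dagger} for the $\pi_k$, and then using (ii) together with injectivity of multiplication-by-$x$ on low-bidegree pieces of $A$ to get \eqref{eq:qxy} for free). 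Your prediction that degree $0$ is the hard case and positive degrees are easy does not match this: the paper handles all nonnegative degrees by the same two-case analysis, and the work is in the linear algebra, not in a special degree. To repair your argument you would need to replace the appeal to Corollary \ref{cor:mindingPsAndQsInGeneral} with the actual $\Kos'$-constraints of Lemma~\ref{lem:T2helper} and then carry out the case split.
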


Let $A=S/I$ and $T^2_{A} := T^2(A/\kk, A)$. By Corollary
\ref{cor:vanishing-T2}, it is enough to show $T^2_{A, \ge 0} = 0$.
Let $\cF_{\bullet}$ be a minimal free resolution of $A$ over $S$,
%% \[
%% \cF^{\bullet} \colon \dotsb \longrightarrow \cF^{-2}
%% \stackrel{d_{\cF}^{-2}}{\longrightarrow} \cF^{-1} = S(-n_1)^{n_1+1}
%% \oplus S(-n_1)^{n_2+1} \oplus S(-2)
%% \stackrel{d_{\cF}^{-1}}{\longrightarrow} \cF^0 = S,
%% \]
\[
\cF_{\bullet} \colon S \stackrel{\ d_{1}^{\cF}}{\longleftarrow}
S(-n_1)^{n_1+1} \oplus S(-n_2)^{n_2+1} \oplus S(-2)
\stackrel{\ d_{2}^{\cF}}{\longleftarrow} \cF_{2} \longleftarrow
\dotsb,
\]
and set $F := \cF_{1}$.
%% is a free resolution of $A$ over $S$,
The truncated cotangent complex $L_{\bullet} := L_{A/\kk, \bullet}$ of
the map $\kk \to A$ has terms $L_2 = Q/\Kos$, $L_1 = F/IF = F
\otimes_S A$, and $L_0 =\Omega_{S/\kk} \otimes_S A$, where $Q = \ker
d_{1}^{\cF}$ and $\Kos \subset Q$ is the submodule of Koszul
relations.  Note that $L_2 \cong \cF_{2} / \Kos'$, where $\Kos'$ is
the preimage of $\Kos$ under $d_{2}^{\cF}$.  So $L_{\bullet}$ equals
\[ \tag{$cot_{A/\kk}$} \label{cotcplxAk}
  L_{\bullet} \colon \Omega_{S/\kk} \otimes_S A \longleftarrow F/IF
  \longleftarrow Q/\Kos = \cF_{2}/\Kos'.
\]
Observe that $\cF_{\bullet}$ inherits the bigrading, and moreover,
$L_{\bullet}$ is bigraded, as is seen from the generators and the
definition of the differential.

We denote generators of $F$ by
\[
(x^{n_1-k}y^k;),\quad (z^{n_2-\ell}w^{\ell};), \quad (q;),
\]
for $0 \le k \le n_1$, $0 \le
\ell \le n_2$, and
\[
q := xz-yw,
\]
so that $d_{1}^{\cF}(g;) = g$, for a generator $g \in I$.  Albeit odd
at first glance, this notation conveniently extends to encode
syzygies, where $(g;h)$ is used to denote a syzygy obtained from
multiplication of a generator $g$ by an element $h$. Thus, among the
generators of $\cF_{2}$ are $(x^{n_1-k}y^k; x)$ and
$(z^{n_2-\ell}w^{\ell}; z)$, for $k,\ell > 0$---these elements map to
the (minimal) syzygies
\[
y(x^{n_1-k+1}y^{k-1};) - x(x^{n_1-k}y^k;) \quad \text{ and } \quad
w(z^{n_2-\ell+1}w^{\ell-1};) - z(z^{n_2-\ell}w^{\ell};)
\]
obtained respectively by multiplying $x^{n_1-k}y^k$ by $x$, and
$z^{n_2-\ell}w^{\ell}$ by $z$.  (The ideals $\llrr{x,y}^{n_1}$ and
$\llrr{z,w}^{n_2}$ are minimally resolved (individually) by the
Eliahou--Kervaire resolution, which applies more generally to
\emph{stable} ideals and can be completely described in notation
generalizing this; see \cite[\S 28]{Peeva--2011} for details).  In
addition, $\cF_{2}$ has generators we shall denote $(q;
x^{n_1-k}y^{k-1})$ and $(q; z^{n_2-\ell}w^{\ell-1})$, for $1 \le k \le
n_1$ and $1 \le \ell \le n_2$---these map to the (minimal) syzygies
\[
z (x^{n_1-k+1}y^{k-1};) - w (x^{n_1-k}y^{k};) - x^{n_1-k}y^{k-1} (q;)
\]
and
\[
x (z^{n_2-\ell+1}w^{\ell-1};) - y (z^{n_2-\ell}w^{\ell};) -
z^{n_2-\ell}w^{\ell-1} (q;)
\]
respectively.  (Cf.\ relations (\ref{syz:xy})--(\ref{syz:qzw}).)

\begin{lemma}
  \label{lem:L2gens}
  The cotangent module $L_2$ is generated by the aforementioned
  syzygies, namely, by $(x^{n_1-k}y^k; x)$, $(z^{n_2-\ell}w^{\ell};
  z)$, $(q; x^{n_1-k}y^{k-1})$, and $(q; z^{n_2-\ell}w^{\ell-1})$, for
  appropriate $k,\ell$.
\end{lemma}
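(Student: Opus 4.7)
The strategy is direct reduction: I would show that every element $\sigma \in Q := \ker d_1^{\cF}$ is congruent modulo $\Kos$ to an $S$-linear combination of the listed syzygies, which immediately yields that these generate $L_2 = Q/\Kos$ as an $A$-module. Writing
\[
\sigma = \sum_k f_k (x^{n_1-k}y^k;) + \sum_\ell g_\ell (z^{n_2-\ell}w^\ell;) + h(q;) \in Q,
\]
so that $\sum_k f_k x^{n_1-k}y^k + \sum_\ell g_\ell z^{n_2-\ell}w^\ell + hq = 0$ in $S$, the plan proceeds in three reductions.

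First, the listed syzygies $(q; x^{n_1-k}y^{k-1})$ and $(q; z^{n_2-\ell}w^{\ell-1})$ have images that express each $x^{n_1-k}y^{k-1}(q;)$ and $z^{n_2-\ell}w^{\ell-1}(q;)$ as a combination of $F$-generators indexed by $\llrr{x,y}^{n_1}$ and $\llrr{z,w}^{n_2}$, up to the listed syzygy itself. By expanding $h$ in a monomial basis and iterating this exchange, I may assume $h$ lies in a fixed $\kk$-complement $V$ of $\llrr{x,y}^{n_1-1} + \llrr{z,w}^{n_2-1}$ in $S$, after appropriately modifying $f_k$ and $g_\ell$.

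Second, a multigraded analysis forces this reduced $h$ to vanish. Since $q$ has bidegree $(1,1)$, multiplication by $q$ sends nonzero elements of $V$ to elements whose bidegrees and supports cannot be absorbed into $\llrr{x,y}^{n_1} + \llrr{z,w}^{n_2}$; combined with the syzygy identity, this forces $h = 0$. Third, with $h = 0$, the residual equation $\sum_k f_k x^{n_1-k}y^k + \sum_\ell g_\ell z^{n_2-\ell}w^\ell = 0$ is a syzygy of the monomial ideal $\llrr{x,y}^{n_1} + \llrr{z,w}^{n_2}$. Applying the Eliahou--Kervaire resolution to each stable ideal separately shows that the internal syzygies of $\llrr{x,y}^{n_1}$ are $S$-combinations of the listed $(x^{n_1-k}y^k; x)$, and analogously for $\llrr{z,w}^{n_2}$; any cross-syzygy of the form $z^{n_2-\ell}w^\ell (x^{n_1-k}y^k;) - x^{n_1-k}y^k (z^{n_2-\ell}w^\ell;)$ is by definition a Koszul relation and hence vanishes in $L_2$.

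The primary obstacle will be the multigraded argument in step two. It requires choosing $V$ to be a monomial complement and carrying out careful bidegree and support bookkeeping to rule out nontrivial cancellation between $hq$ and $\sum_k f_k x^{n_1-k}y^k + \sum_\ell g_\ell z^{n_2-\ell}w^\ell$, which would otherwise allow $h \neq 0$. In particular, one will want to exploit a clean monomial description of $S/(\llrr{x,y}^{n_1} + \llrr{z,w}^{n_2})$, analogous to the basis $\cB$ of $S/I$ appearing in Lemma~\ref{l:decomposition-of-polynomials}, to verify injectivity of multiplication by $q$ on $V$ modulo $\llrr{x,y}^{n_1} + \llrr{z,w}^{n_2}$.
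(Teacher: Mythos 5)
Your reduction argument is correct, and it takes a genuinely different route than the paper. The paper's proof is top-down: it invokes minimality of the Eliahou--Kervaire resolution to identify the minimal generators of $\cF_2$, observes that the $(q;\cdot)$ syzygies are minimal between $(q;)$ and the monomial generators, and then asserts that any remaining minimal second syzygy must be a Koszul relation between coprime generators $(x^{n_1-k}y^k;)$ and $(z^{n_2-\ell}w^\ell;)$, hence dies in $L_2=\cF_2/\Kos'$. Your proof is bottom-up: you take an arbitrary first syzygy, use the $(q;\cdot)$ syzygies to push the $(q;)$-coefficient $h$ into a monomial complement of $\llrr{x,y}^{n_1-1}+\llrr{z,w}^{n_2-1}$, and then the bidegree argument closes cleanly---any monomial of $hq$ then has bidegree $(a+1,b+1)$ with $a+1\le n_1-1$ and $b+1\le n_2-1$, so it cannot lie in the monomial ideal $\llrr{x,y}^{n_1}+\llrr{z,w}^{n_2}$; since this is a monomial ideal, the syzygy identity forces $hq=0$, and since $q$ is a nonzerodivisor in $S$ this gives $h=0$. (The ``cancellation'' worry you flag is not an issue: the point is containment in a monomial ideal, not cancellation against the $f_k,g_\ell$ part.) The residual step-three reduction to internal Eliahou--Kervaire syzygies plus Koszul cross-relations is exactly the tensor-product description of the minimal resolution of a sum of ideals in disjoint variables. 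What the paper's approach buys is brevity and a clear statement of which generators of $\cF_2$ are minimal; what yours buys is an explicit, self-contained verification that the listed elements generate $Q$ modulo $\Kos$, which is closer in spirit to the computational style of the rest of Section~4 and arguably easier to check line by line.
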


\begin{proof}
  Minimality of the Eliahou--Kervaire resolution produces the
  generators $(x^{n_1-k}y^k; x)$ and $(z^{n_2-\ell}w^{\ell}; z)$ of
  $\cF_{2}$, while the syzygies $(q; x^{n_1-k}y^{k-1})$ and $(q;
  z^{n_2-\ell}w^{\ell-1})$ are minimal between $(q;)$ and either
  $(x^{n_1-k}y^k;)$ or $(z^{n_2-\ell}w^{\ell};)$.  Finally, any
  minimal syzygies between $(x^{n_1-k}y^k;)$ and
  $(z^{n_2-\ell}w^{\ell};)$ must be Koszul relations, as the
  corresponding generators of $I$ have no variables in common.
\end{proof}

By definition, $T^2_{A}$ is the quotient of $L^2 := \Hom_{A}(L_2, A)$
by the image of $d^1_L := - \circ d_2^L$, where $d_2^L \colon L_2 \to
L_1$ is induced by $d_{2}^{\cF}$.
%% \colon F^{-2} \to Q \subset F^{-1}$.  Importantly, the construction
%% of the truncated cotangent complex and its tangent cohomology
%% preserves the standard grading on $S$.
We aim to understand $L^2_{\ge 0}$, specifically showing the
following.

\begin{proposition}
  \label{prop:obs1}
  Given the preceding set-up, we have $L^2_{\ge 0} = d^1_L(L^1_{\ge
    0})$, i.e.\ $T^2_{A, \ge 0} = 0$.
\end{proposition}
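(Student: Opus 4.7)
The plan is to show that every $\mu \in L^2$ of nonnegative degree lies in the image of $d_L^1$. By Lemma \ref{lem:L2gens}, $\mu$ is encoded by its values on the four families of generators of $L_2$:
\[
\alpha_k := \mu(x^{n_1-k}y^k;\,x), \quad \beta_\ell := \mu(z^{n_2-\ell}w^\ell;\,z),
\]
\[
\gamma_k := \mu(q;\,x^{n_1-k}y^{k-1}), \quad \delta_\ell := \mu(q;\,z^{n_2-\ell}w^{\ell-1}),
\]
in $A$, subject to the Koszul compatibilities forcing $\mu$ to descend from $\cF_2$ to $\cF_2/\Kos'$. A prospective preimage $\psi \in L^1$ with values $\psi_k := \psi(x^{n_1-k}y^k;)$, $\phi_\ell := \psi(z^{n_2-\ell}w^\ell;)$, and $\rho := \psi(q;)$ must then satisfy
\begin{align*}
  y\psi_{k-1} - x\psi_k &= \alpha_k, \\
  w\phi_{\ell-1} - z\phi_\ell &= \beta_\ell, \\
  z\psi_{k-1} - w\psi_k - x^{n_1-k}y^{k-1}\rho &= \gamma_k, \\
  x\phi_{\ell-1} - y\phi_\ell - z^{n_2-\ell}w^{\ell-1}\rho &= \delta_\ell.
\end{align*}

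First I would compile the Koszul compatibilities satisfied by the tuple $(\alpha,\beta,\gamma,\delta)$. Each Koszul relation $g\,e_{g'} - g'\,e_g$, for generators $g,g'$ of $I$, lifts to an explicit $S$-combination of the generators listed in Lemma \ref{lem:L2gens}, and the vanishing of $\mu$ on this lift yields a linear identity in $A$ among the values of $\mu$. Relations internal to $\llrr{x,y}^{n_1}$ produce telescoping-type identities among the $\alpha_k$; those internal to $\llrr{z,w}^{n_2}$ produce analogous identities among the $\beta_\ell$; the Koszul relations between $q$ and a monomial generator tie the $\gamma_k$ to the $\alpha_k$ and the $\delta_\ell$ to the $\beta_\ell$; and the Koszul relations between $\llrr{x,y}^{n_1}$- and $\llrr{z,w}^{n_2}$-generators couple the two halves of the system.

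Next, I would construct the $\psi_k$ by antidifferentiating the first family of equations. The compatibilities produced in the previous step, together with the structural decomposition of Corollary \ref{cor:mindingPsAndQsInGeneral}, are intended to yield candidate values $\psi_k \in A$, unique up to a controlled ambiguity in $\Ann_{x,y}$; a symmetric argument gives the $\phi_\ell$. Once $\psi$ and $\phi$ are determined, $\rho$ is read off from a single $\gamma$- or $\delta$-equation, and the Koszul compatibilities among the remaining $\gamma$'s and $\delta$'s should force the remaining equations to hold automatically.

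The main obstacle is the solvability of $x\psi_k = y\psi_{k-1} - \alpha_k$ in $A$: because multiplication by $x$ has nontrivial kernel and cokernel in $A$, the right-hand side may a priori fail to lie in $\image(x\colon A\to A)$. The nonnegative-degree hypothesis is the essential input that resolves this: every $\alpha_k$, $\beta_\ell$, $\gamma_k$, $\delta_\ell$ is forced to live in the upper-degree portion of $A$, close to the socle in bidegree $(n_1-1,n_2-1)$, which tightly constrains their supports and places them in the image of multiplication by the appropriate variable. The final verification that the $\psi_k$, $\phi_\ell$, and $\rho$ so constructed satisfy all four systems is then a case analysis in the spirit of the proof of Proposition \ref{prop:tnt1}, but run in the reverse direction: rather than ruling out nontrivial elements by degree counting, we use degree counting to construct a preimage.
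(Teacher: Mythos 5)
Your outline matches the paper's strategy in broad strokes: encode $\mu\in L^2$ by its values on the generators of $L_2$, write down the four families of equations a preimage $\psi\in L^1$ must satisfy, and argue the system is solvable in nonnegative degree. But the proposal stops short of resolving the obstacle it correctly identifies, and the explanation offered for why it should be resolvable is not accurate. The claim that the nonnegative-degree hypothesis by itself ``places them in the image of multiplication by the appropriate variable'' does not hold by degree counting. The actual constraint on the support of $\alpha_k = \mu(x^{n_1-k}y^k;x)$ comes from a Koszul compatibility — namely $x^{n_1-k}y^{k-1}(x^{n_1-k}y^k;x)\in\Kos'$ — which forces every term of $\alpha_k$ to be divisible by $x$ or $y$ (Lemma~\ref{lem:T2helper}(i)). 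The degree hypothesis plays a different role: it bounds the bidegree $(-d_1,d_2)$ of $\mu$, and the paper splits into the two cases $d_1 = 1$ (all $\alpha_k=0$, but some $\gamma_\ell\neq 0$, forced into $\Ann_{x,y}$) and $d_1 \ge 2$ (some $\alpha_k\neq 0$). Moreover, even knowing each $\alpha_k$ lies in the image of multiplication by $x$ is not sufficient: one must produce a \emph{compatible} family $\psi_0,\dotsc,\psi_{n_1}$ solving all the $\alpha$-equations simultaneously. The paper does this by reducing to explicit triangular linear systems ($(\dagger)$ and $(\ddagger)$) and writing out the unique solution in terms of free parameters. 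Your ``antidifferentiating'' glosses over precisely this step, which is the technical heart of the proof.

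Two further steps are absent. First, once the $\psi_k$ solve the $\alpha$-equations, you must verify the $\gamma$-equations; in the paper this follows from the second Koszul identity $xQ_k = wP_k$ (Lemma~\ref{lem:T2helper}(ii)) combined with injectivity of multiplication-by-$x$ on the relevant bigraded piece, which yields the $\gamma$-equations for \emph{any} choice of $\rho$. This is not merely ``a case analysis in the spirit of Proposition~\ref{prop:tnt1}.'' Second, you assert that the Koszul relations between $\llrr{x,y}^{n_1}$- and $\llrr{z,w}^{n_2}$-generators couple the two halves of the system; in the paper's proof the $(\alpha,\gamma)$ and $(\beta,\delta)$ systems in fact decouple entirely, interacting only through $\rho = \psi(q;)$, and the consistency issue is dispatched by noting that $\rho=0$ works in every case. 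As written, the proposal is a plausible outline of the paper's argument but leaves its essential computations unverified.
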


In other words, any $A$-linear map $\psi \colon L_2 \to A$ of
nonnegative degree extends over the differential $d_2^{L} \colon L_2
\to L_1$ to a compatible $A$-linear map $\psi' \colon L_1 \to A$.
Before proving this, we set some notation and record a helpful lemma.
According to Lemma \ref{lem:L2gens}, $\psi$ is determined by its
values on $(x^{n_1-k}y^k; x)$, $(z^{n_2-\ell}w^{\ell}; z)$, $(q;
x^{n_1-k}y^{k-1})$, and $(q; z^{n_2-\ell}w^{\ell-1})$, for $1 \le k
\le n_1$ and $1 \le \ell \le n_2$.
%% As before, $A = S/I$ has $\kk$-basis
%% $(\bx^{\bu} \mid \bu \in \cB)$, so
Lemma \ref{l:decomposition-of-polynomials} yields expressions
\begin{align*}
  %% \psi(x^{n_1-i}y^i; x) &:= \sum_{\substack{\bu \in \cB \\ |\bu| \ge
  %%     n_1 +1}} \alpha_{\bu}^{(i)} \bx^{\bu} +I, &\psi(z^{n_2-j}w^j; z)
  %% &:= \sum_{\substack{\bu \in \cB \\ |\bu| \ge n_2 +1}}
  %% \beta_{\bu}^{(j)} \bx^{\bu} +I, \\ \psi(q; x^{n_1-i}y^{i-1}) &:=
  %% \sum_{\substack{\bu \in \cB \\ |\bu| \ge n_1 +1}} \gamma_{\bu}^{(i)}
  %% \bx^{\bu} +I, &\psi(q; z^{n_2-j}w^{j-1}) &:= \sum_{\substack{\bu \in
  %%     \cB \\ |\bu| \ge n_2 +1}} \delta_{\bu}^{(j)} \bx^{\bu} +I,
  \psi(x^{n_1-k}y^k; x) &=: P_k = \sum_{0<i<n_1} y^i(P_k)_{i,0} +
  (P_k)_{0,0} + \sum_{0<j<n_2} w^j(P_k)_{0,j} \quad\text{ and }
  \\ \psi(q; x^{n_1-k}y^{k-1}) &=: Q_k = \sum_{0<i<n_1} y^i
  (Q_k)_{i,0} + (Q_k)_{0,0} + \sum_{0<j<n_2} w^j(Q_k)_{0,j}
  %% &\psi(z^{n_2-j}w^j; z) &:= \sum_{\substack{\bu \in \cB \\ |\bu|
  %% \ge n_2 +1}} \beta_{\bu}^{(j)} \bx^{\bu} +I, \\ &\psi(q;
  %% z^{n_2-j}w^{j-1}) &:= \sum_{\substack{\bu \in \cB \\ |\bu| \ge
  %% n_2 +1}} \delta_{\bu}^{(j)} \bx^{\bu} +I,
\end{align*}
along with similar expressions for $\psi(z^{n_2-\ell}w^{\ell}; z)$ and
$\psi(q; z^{n_2-\ell}w^{\ell-1})$.
%% for fixed coefficients $\alpha_{\bu}^{(i)}, \beta_{\bu}^{(j)},
%% \gamma_{\bu}^{(i)}, \delta_{\bu}^{(j)} \in \kk$.

\begin{lemma}
  \label{lem:T2helper}
  Any homomorphism $\psi \colon L_2 \to A$ as above satisfies the
  following:
  \begin{enumerate}
  \item %% if $\bu \in \cB$ has entries $u_1 = u_2 = 0$, then
    %% $\alpha_{\bu}^{(i)} = 0$, for all $1 \le i \le n_1$;
    all terms of $(P_k)_{0,j}$ are divisible by $x$, for $0 \le j <
    n_2$, and
    %% \item if $\bu \in \cB$ has entries $u_3 = u_4 = 0$, then
    %%   $\beta_{\bu}^{(j)} = 0$, for all $1 \le j \le n_2$;
  \item the equalities
    %%  $x \psi(q; x^{n_1-k} y^{k-1}) = w
    %% \psi(x^{n_1-k} y^k; x)$ and $y \psi(q; x^{n_1-k} y^{k-1}) = z
    %% \psi(x^{n_1-k} y^k; x)$
    $x Q_k = w P_{k}$ and $y Q_k = z P_{k}$ hold.
    %% \item the equalities $z \psi(q; z^{n_2-j} w^{j-1}) = y
    %%   \psi(z^{n_2-j} w^j; z)$ and $w \psi(q; z^{n_2-j} w^{j-1}) = x
    %%   \psi(z^{n_2-j} w^j; z)$ hold, for all $1 \le j \le n_2$.
  \end{enumerate}
  The analogous statements for $\psi(z^{n_2-\ell}w^{\ell}; z)$ and
  $\psi(q; z^{n_2-\ell}w^{\ell-1})$ are also true.
\end{lemma}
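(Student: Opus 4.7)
The plan is to derive both parts of the lemma by showing that specific $S$-linear combinations of generators of $\cF_2$ lie in the submodule $\Kos' := (d_2^{\cF})^{-1}(\Kos)$, on which any $\psi \in L^2$ must vanish. Set $T_k := (x^{n_1-k}y^k; x)$, $V_k := (q; x^{n_1-k}y^{k-1})$, $U_\ell := (z^{n_2-\ell}w^\ell; z)$, and $W_\ell := (q; z^{n_2-\ell}w^{\ell-1})$.

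For part (ii), I would verify by direct expansion using the formulas for $d_2^{\cF}$ that
\[
d_2^{\cF}(zT_k - yV_k) = x^{n_1-k}y^k(q;) - q(x^{n_1-k}y^k;)
\]
and
\[
d_2^{\cF}(xV_k - wT_k) = q(x^{n_1-k+1}y^{k-1};) - x^{n_1-k+1}y^{k-1}(q;),
\]
each being a Koszul relation between $q$ and a monomial generator of $\llrr{x,y}^{n_1}$, hence an element of $\Kos$. Applying $\psi$ to each combination yields $zP_k = yQ_k$ and $xQ_k = wP_k$, establishing (ii). The analogous statements for $\psi(U_\ell)$ and $\psi(W_\ell)$ follow by the symmetric computation of the $d_2^{\cF}$-images of $yU_\ell - zW_\ell$ and $wW_\ell - xU_\ell$, which are Koszul relations between $q$ and monomial generators of $\llrr{z,w}^{n_2}$.

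For part (i), I would invoke the equation $xQ_k = wP_k$ from (ii) and expand both sides in the basis $\cB$ via Lemma \ref{l:decomposition-of-polynomials}. The relation $yw = xz$ in $A$ forces $wy^i = xzy^{i-1}$ for $i \ge 1$, so in the expansion of $wP_k$ the $wy^i$-terms get re-routed from the $y^i$-slot to the $y^{i-1}$-slot. Consequently, the only contribution of $wP_k$ to the $w^{j+1}$-slot of $A$ is $w\cdot(P_k)_{0,j}$, while the only contribution of $xQ_k$ to the same slot is $xw\cdot(Q_k)_{0,j+1}$. Comparing coefficients in this slot (after reducing modulo the relations $x^{n_1}(\cdot) = 0$ and $z^{n_2-j-1}w^{j+1}(\cdot) = 0$ in $A$) identifies $(P_k)_{0,j}$ with $x\cdot(Q_k)_{0,j+1}$ as polynomials in $x, z$, forcing every term of $(P_k)_{0,j}$ to be divisible by $x$.

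The main technical challenge is executing these basis reductions carefully: the identity $yw = xz$ creates cross-interactions between the $y$-type and $w$-type components of the decomposition, and the boundary terms (where $z$- or $x$-degree is maximal in a given slot) must be handled with precision in order to extend the divisibility conclusion uniformly across all of $(P_k)_{0,j}$.
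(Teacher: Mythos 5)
Your proof of (ii) is correct and is in essence the paper's argument: the paper verifies the same two memberships in $\Kos'$, namely $x(q;x^{n_1-k}y^{k-1}) - w(x^{n_1-k}y^k;x)$ and the analogous one with $z$ and $y$, and applies $\psi$.

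Your proof of (i) has a genuine gap, and it cannot be closed using only (ii). From $xQ_k = wP_k$, comparing $w^{j+1}$-slots gives $w^{j+1}(P_k)_{0,j} = w^{j+1}x(Q_k)_{0,j+1}$ in $A$. But multiplication by $w^{j+1}$ kills every monomial of $(P_k)_{0,j}$ whose $z$-degree equals $n_2-j-1$ (since $z^{n_2-j-1}w^{j+1} \in \llrr{z,w}^{n_2}$), and Lemma \ref{l:decomposition-of-polynomials} allows $(P_k)_{0,j}$ to have $z$-degree up to $n_2-j-1$. So the equation $xQ_k = wP_k$ only identifies $(P_k)_{0,j}$ with $x(Q_k)_{0,j+1}$ modulo these top-$z$-degree terms; it says nothing about whether a term like $z^{n_2-j-1}$ (with $x$-degree $0$) appears in $(P_k)_{0,j}$. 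Using $yQ_k = zP_k$ instead, or in combination, gives exactly the same blind spot, since $z \cdot z^{n_2-j-1}w^j$ is also killed. The paper closes this by invoking a different element of $\Kos'$, namely $x^{n_1-k}y^{k-1}(x^{n_1-k}y^k;x)$, which maps under $d_2^{\cF}$ to the Koszul relation between the generators $x^{n_1-k}y^k$ and $x^{n_1-k+1}y^{k-1}$ of $I$. Applying $\psi$ gives $x^{n_1-k}y^{k-1}P_k = 0$, and expanding this directly along the decomposition of Lemma \ref{l:decomposition-of-polynomials} (using $yw = xz$) kills \emph{all} of $(P_k)_{0,j}^z$, including the boundary $z^{n_2-j-1}$ terms. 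To repair your argument for (i), you would need to add this third family of Koszul relations (between pairs of monomial generators of $\llrr{x,y}^{n_1}$) to your list.
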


\begin{proof}
  %% To prove (i),
  %% let $\bx^{\bu} = z^{u_3} w^{u_4}$ be a %% corresponding monomial
  %% with $u_1 = u_2 = 0$.  We may suppose that $|\bu| = u_3 + u_4 \ge
  %% n_1 +1$, which necessitates $n_2 \ge n_1 + 2$; otherwise no such
  %% $\alpha_{\bu}^{(i)}$ exists.
  Observe that $x^{n_1-k}y^{k-1} (x^{n_1-k}y^k; x) \in \Kos'$, so that
  \begin{align*}
  0 &= x^{n_1-k}y^{k-1} \psi(x^{n_1-k}y^k; x) =
  x^{n_1-k}y^{k-1}(P_k)_{0,0} + x^{n_1-k}y^{k-1}\sum_{0<j<n_2}
  w^j(P_k)_{0,j} \\ &= y^{k-1}x^{n_1-k}(P_k)_{0,0} + \sum_{0<j<k}
  y^{k-1-j} x^{n_1-k+j} z^j (P_k)_{0,j} + \sum_{k\le j<n_2} w^{j-k+1}
  x^{n_1-1} z^{k-1} (P_k)_{0,j} \\ &= \sum_{0<j<k} y^{j} x^{n_1-1-j}
  z^{k-1-j} (P_k)_{0,k-1-j} + x^{n_1-1} z^{k-1} (P_k)_{0,k-1} +
  \sum_{0<j\le n_2-k} w^{j} x^{n_1-1} z^{k-1} (P_k)_{0,k-1+j} \\ &=
  \sum_{0<j<k} y^{j} x^{n_1-1-j} z^{k-1-j} (P_k)_{0,k-1-j}^z +
  x^{n_1-1} z^{k-1} (P_k)_{0,k-1}^z + \sum_{0<j\le n_2-k} w^{j}
  x^{n_1-1} z^{k-1} (P_k)_{0,k-1+j}^z,
  \end{align*}
  %% \[
  %% \alpha_{\bu}^{(i)} x^{n_1-i} y^{i-1} z^{u_3} w^{u_4} +I =
  %% \alpha_{\bu}^{(i)} x^{n_1-i+k} y^{i-1-k} z^{u_3+k} w^{u_4-k} +I,
  %% \]
  where $(P_k)_{0,j}^z$ is the $x^0z^{\ge 0}$-part of $(P_k)_{0,j}$.
  Since the $z$-degree of $(P_k)_{0,j}^z$ is less than $n_2-j$, this
  shows $(P_k)_{0,j}^z = 0$, for all $0 \le j < n_2$, proving (i).  To
  prove the first equality in (ii), simply observe that $x(q;
  x^{n_1-k}y^{k-1}) - w(x^{n_1-k}y^{k}; x) \in \Kos'$; the second
  equality is similarly proved.  The analogous statements for
  $\psi(z^{n_2-\ell}w^{\ell}; z)$ and $\psi(q;
  z^{n_2-\ell}w^{\ell-1})$ are obtained by switching the roles of
  $x,y$ and $z,w$.
\end{proof}

Lemma \ref{lem:T2helper}(i) says that all terms of $\psi(x^{n_1-k}y^k;
x)$ are divisible by $x$ or $y$.
%%  and that all terms of $\psi(z^{n_2-\ell}w^{\ell}; z)$ are
%% divisible by $z$ or $w$.
Part (ii) imposes further restrictions on the
%% coefficients of
values of $\psi$.  We can now proceed with the proof of Proposition
\ref{prop:obs1}.

\begin{proof}[Proof of Proposition \ref{prop:obs1}]
  The goal is to find compatible values $\psi'(x^{n_1-k}y^k;)$,
  $\psi'(z^{n_2-\ell}w^{\ell};)$, and $\psi'(q;)$ for the generators
  of $L_1$ in order to define an extension $\psi' \colon L_1 \to A$
  such that $\psi = \psi' \circ d_2^L$, i.e.~we require values
  $\pi_k := \psi'(x^{n_1-k}y^k;)$ and $\rho := \psi'(q;)$
  %% , we seek values $\pi_k \in A$, for $0 \le k \le n_1$, %%
  %% $\varsigma_{\ell} := \psi'(z^{n_2-j}w^j;)$, and $\rho :=
  %% \psi'(q;) \in A$ and $0 \le \ell \le n_2$
  in $A$ such that the equalities
  %% the equalities
  \begin{align}
    %% \psi(x^{n_1-i}y^i; x) &= \ y\psi'(x^{n_1-i+1}y^{i-1};) -
    %% x\psi'(x^{n_1-i}y^i;), \label{eq:xy} \\ \psi(z^{n_2-j}w^j; z) &=
    %% w\psi'(z^{n_2-j+1}w^{j-1};) - z\psi'(z^{n_2-j}w^j;), \label{eq:zw}
    %% \\ \psi(q; x^{n_1-i}y^{i-1}) &= z \psi'(x^{n_1-i+1}y^{i-1};) - w
    %% \psi'(x^{n_1-i}y^{i};) - x^{n_1-i}y^{i-1} \psi'(q;), \text{ and
    %% } \label{eq:qxy} \\ \psi(q; z^{n_2-j}w^{j-1}) &= x \psi'
    %% (z^{n_2-j+1}w^{j-1};) - y \psi'(z^{n_2-j}w^{j};) -
    %% z^{n_2-j}w^{j-1} \psi'(q;) \label{eq:qzw}
    P_k &= \ y\pi_{k-1} - x\pi_k \quad\text{ and } \label{eq:xy} \\
    %% Q_{\ell} &= w\varsigma_{\ell-1} -
    %% z\varsigma_{\ell}, \label{eq:zw} \\
    Q_k &= z \pi_{k-1} - w \pi_{k} - x^{n_1-k}y^{k-1}
    \rho \label{eq:qxy}
    %% , \text{ and } \label{eq:qxy} \\ \psi(q; z^{n_2-j}w^{j-1}) &= x
    %% \psi' (z^{n_2-j+1}w^{j-1};) - y \psi'(z^{n_2-j}w^{j};) -
    %% z^{n_2-j}w^{j-1} \psi'(q;) \label{eq:qzw}
  \end{align}
  hold when $k>0$, along with analogous equalities involving
  $\psi(z^{n_2-\ell}w^{\ell}; z)$, $\psi(q; z^{n_2-\ell}w^{\ell-1})$,
  $\psi'(z^{n_2-\ell}w^{\ell};)$, and $\rho$.  As $T^2_A$ is bigraded
  and graded, we simplify by assuming that $\psi$
  %% \colon L_2 \to A$
  and $\psi'$ are bihomogeneous of bidegree $(-d_1,d_2)$ and total
  degree $d_2-d_1 \ge 0$.

  To begin, suppose that $P_k = 0$ for all $k$.  If all $Q_k = 0$,
  then \eqref{eq:xy} and \eqref{eq:qxy} are solved by setting all
  $\pi_k = 0$ and $\rho = 0$.
  
  Next, assume that some $Q_\ell \neq 0$. Lemma \ref{lem:T2helper}(ii)
  then tells us $Q_\ell$ is a nonzero element of $\Ann_{x,y}$; since
  $\bideg(Q_{\ell}) = (n_1-d_1,1+d_2)$, this forces $d_1 = 1$ and $0 <
  d_2 < n_2 -1$. We will solve \eqref{eq:qxy} by choosing $\rho=0$ and
  $\pi_0, \pi_1, \dotsc, \pi_{n_1}\in\Ann_{x,y}$; hence \eqref{eq:xy}
  is trivially satisfied. Then, for all $k$, we have
  \begin{align*}
    Q_k &= \sum_{0<i<n_1} y^i (Q_k)_{i,0} + (Q_k)_{0,0} +
    \sum_{0<j<n_1} w^j (Q_k)_{0,j} \\ &= \sum_{0<i<n_1} y^i
    x^{n_1-1-i} z^{d_2+1} b^{(k)}_{i,0} + x^{n_1-1} z^{d_2+1}
    b^{(k)}_{0,0} + \sum_{0<j\le d_2+1} w^j x^{n_1-1} z^{d_2+1-j}
    b^{(k)}_{0,j},
  \end{align*}
  where each $b^{(k)}_{i,j} \in \kk$.  Similarly, we have
  \begin{align*}
    z\pi_{k-1} &= z \left( \sum_{0<i<n_1} y^i (\pi_{k-1})_{i,0} +
    (\pi_{k-1})_{0,0} + \sum_{0<j<n_2} w^j (\pi_{k-1})_{0,j} \right)
    \\ &= z \left( \sum_{0<i<n_1} y^i x^{n_1-1-i} z^{d_2}
    \mu^{(k-1)}_{i,0} + x^{n_1-1} z^{d_2} \mu^{(k-1)}_{0,0} +
    \sum_{0<j\le d_2} w^j x^{n_1-1} z^{d_2-j} \mu^{(k-1)}_{0,j}
    \right) \\ &= \sum_{0<i<n_1} y^i x^{n_1-1-i} z^{d_2+1}
    \mu^{(k-1)}_{i,0} + x^{n_1-1} z^{d_2+1} \mu^{(k-1)}_{0,0} +
    \sum_{0<j\le d_2} w^j x^{n_1-1} z^{d_2+1-j} \mu^{(k-1)}_{0,j},
  \end{align*}
  along with
  \begin{align*}
    w\pi_{k} &= w \left( \sum_{0<i<n_1} y^i (\pi_{k})_{i,0} +
    (\pi_{k})_{0,0} + \sum_{0<j<n_2} w^j (\pi_{k})_{0,j} \right)
    \\ &= w \left( \sum_{0<i<n_1} y^i x^{n_1-1-i} z^{d_2}
    \mu^{(k)}_{i,0} + x^{n_1-1} z^{d_2} \mu^{(k)}_{0,0} + \sum_{0<j\le
      d_2} w^j x^{n_1-1} z^{d_2-j} \mu^{(k)}_{0,j} \right) \\ &=
    \sum_{0<i<n_1} y^{i-1} x^{n_1-i} z^{d_2+1} \mu^{(k)}_{i,0} + w
    x^{n_1-1} z^{d_2} \mu^{(k)}_{0,0} + \sum_{0<j\le d_2} w^{j+1}
    x^{n_1-1} z^{d_2-j} \mu^{(k)}_{0,j} \\ &= \sum_{0<i<n_1-1} y^{i}
    x^{n_1-1-i} z^{d_2+1} \mu^{(k)}_{i+1,0} + x^{n_1-1} z^{d_2+1}
    \mu^{(k)}_{1,0} + \sum_{0<j\le d_2+1} w^{j} x^{n_1-1} z^{d_2+1-j}
    \mu^{(k)}_{0,j-1},
  \end{align*}
  where each $\mu^{(k)}_{i,j} \in \kk$.  Thus, \eqref{eq:qxy} reduces
  to the system
  \[ \tag{$\ddagger$} \label{ddagger}
  \begin{cases}
    \begin{aligned}
      b^{(k)}_{n_1-1,0} &= \mu^{(k-1)}_{n_1-1,0} &&\text{ if
      } i = n_1-1, \\
      b^{(k)}_{i,0} &= \mu^{(k-1)}_{i,0} - \mu^{(k)}_{i+1,0}
      &&\text{ if } 0 \le i \le n_1-2, \\
      b^{(k)}_{0,j} &= \mu^{(k-1)}_{0,j} - \mu^{(k)}_{0,j-1}
      &&\text{ if } 0 < j \le d_2, \text{ and } \\
      b^{(k)}_{0,d_2+1} &= - \mu^{(k)}_{0,d_2} &&\text{ if } j = d_2+1.
    \end{aligned}
  \end{cases}
  \]
  This gives a system of linear equations in the variables
  $\mu_{i,j}^{(k)}$ which splits into two independent subsystems:
  \begin{enumerate}
  \item all equations involving $\mu_{i,j}^{(k)}$'s with $k -i +j \le
    d_2$,
  \item all equations involving $\mu_{i,j}^{(k)}$'s with $k -i +j >
    d_2$
  \end{enumerate}
  (the quantity $k -i +j$ is constant among $\mu_{i,j}^{(k)}$'s in
  each equation).  If $i = n_1-1$, then $k - i \le d_2$ holds, because
  $k \le n_1$ and $d_2 \ge 1$, so the first equation of
  \eqref{ddagger} belongs to (i).  Also, $j = d_2$ implies $k + j >
  d_2$ exactly when $k > 0$, so the fourth equation of \eqref{ddagger}
  belongs to (ii).  For (i), after fixing the $\mu_{i,0}^{(n_1)}$ and
  $\mu_{0,j}^{(n_1)}$ arbitrarily, there is a unique solution given by
  \begin{align*}
    \mu_{i,0}^{(k)} &=
    \begin{cases}
      b_{i,0}^{(k+1)} + b_{i+1,0}^{(k+2)} + \dots
      +b_{n_1-1,0}^{(k+n_1-i)} & \text{if }i\geq k,\\ b_{i,0}^{(k+1)}
      +b_{i+1,0}^{(k+2)} +\dots
      +b_{i+n_1-k-1,0}^{(n_1)}+\mu_{i+n_1-k,0}^{(n_1)} & \text{if
      }i<k,
    \end{cases} \\
    \mu_{0,j}^{(k)} &=
    \begin{cases}
      b_{0,j}^{(k+1)} + b_{0,j-1}^{(k+2)} + \dotsb +
      b_{0,j-n_1+k+1}^{(n_1)} + \mu_{0,j-n_1+k}^{(n_1)} &\text{if } j
      \ge n_1-k, \\ b_{0,j}^{(k+1)} +
      %% b_{0,j-1}^{(k+2)} +
      \dotsb + b_{0,1}^{(k+j)} + b_{0,0}^{(k+j+1)} + b_{1,0}^{(k+j+2)}
      \dotsb + b_{n_1-1-k-j,0}^{(n_1)} + \mu_{n_1-k-j,0}^{(n_1)}
      &\text{if } j < n_1-k.
    \end{cases}
  \end{align*}
  For (ii), there is a unique solution given by
  \begin{align*}
    \mu_{i,0}^{(k)} &= -b_{i-1,0}^{(k)} -b_{i-2,0}^{(k-1)} - \dotsb
    -b_{0,0}^{(k-i+1)} -b_{0,1}^{(k-i)} -\dotsb
    -b_{0,d_2+1}^{(k-i-d_2)} \quad \text{ and } \\
    \mu_{0,j}^{(k)} &= - b_{0,j+1}^{(k)} - b_{0,j+2}^{(k-1)} - \dotsb
    -b_{0,d_2+1}^{(k+j-d_2)}.
  \end{align*}
  %% and similarly for $\mu_{0,j}^{(k)}$.\matt{I ran out of steam in
  %% computing the $\mu_{0,j}^{(k)}$ explicitly. I'm happy to leave it
  %% like this, or if you want we can fill in the precise formula
  %% later.}
  This proves that \eqref{ddagger}, and thus \eqref{eq:qxy}, can be
  solved under the assumption that all $P_k = 0$, and therefore that
  \eqref{eq:xy} and \eqref{eq:qxy} can be solved under this
  assumption.

  Lastly, we turn to the case where some $P_\ell \neq 0$. Since
  $\bideg(P_\ell) = (n_1+1-d_1, d_2)$, we must have $d_1 > 1$ and
  $d_2<n_2$. Applying Lemma \ref{lem:T2helper}(i), we see $0 <
  n_1+1-d_1<n_1$. Now, for all $k$, we have
  \begin{align*}
    P_k &= \sum_{0<i<n_1} y^i (P_k)_{i,0} + (P_k)_{0,0} +
    \sum_{0<j<n_1} w^j (P_k)_{0,j} \\ &= \sum_{0<i\le n_1-d_1+1} y^i
    x^{n_1+1-d_1-i} z^{d_2} c^{(k)}_{i,0} + x^{n_1+1-d_1} z^{d_2}
    c^{(k)}_{0,0} + \sum_{0<j\le d_2} w^j x^{n_1+1-d_1} z^{d_2-j}
    c^{(k)}_{0,j},
  \end{align*}
  where each $c^{(k)}_{i,j} \in \kk$.  Similarly, we have
  \begin{align*}
    \!\! y\pi_{k-1} &= y \left( \sum_{0<i<n_1} y^i (\pi_{k-1})_{i,0} +
    (\pi_{k-1})_{0,0} + \sum_{0<j<n_2} w^j (\pi_{k-1})_{0,j} \right)
    \\ &= y \left( \sum_{0<i\le n_1-d_1} y^i x^{n_1-d_1-i} z^{d_2}
    \lambda^{(k-1)}_{i,0} + x^{n_1-d_1} z^{d_2} \lambda^{(k-1)}_{0,0}
    + \sum_{0<j\le d_2} w^j x^{n_1-d_1} z^{d_2-j}
    \lambda^{(k-1)}_{0,j} \right) \\ &= \sum_{0<i\le n_1-d_1} y^{i+1}
    x^{n_1-d_1-i} z^{d_2} \lambda^{(k-1)}_{i,0} + yx^{n_1-d_1} z^{d_2}
    \lambda^{(k-1)}_{0,0} + \sum_{0<j\le d_2} w^{j-1} x^{n_1+1-d_1}
    z^{d_2-j+1} \lambda^{(k-1)}_{0,j} \\ &= \sum_{0<i\le n_1+1-d_1}
    y^{i} x^{n_1+1-d_1-i} z^{d_2} \lambda^{(k-1)}_{i-1,0} +
    x^{n_1+1-d_1} z^{d_2} \lambda^{(k-1)}_{0,1} + \sum_{0<j<d_2} w^{j}
    x^{n_1+1-d_1} z^{d_2-j} \lambda^{(k-1)}_{0,j+1}
  \end{align*}
  along with
  \begin{align*}
    x\pi_{k} &= x \left( \sum_{0<i<n_1} y^i (\pi_{k})_{i,0} +
    (\pi_{k})_{0,0} + \sum_{0<j<n_2} w^j (\pi_{k})_{0,j} \right) \\ &=
    x \left( \sum_{0<i\le n_1-d_1} y^i x^{n_1-d_1-i} z^{d_2}
    \lambda^{(k)}_{i,0} + x^{n_1-d_1} z^{d_2} \lambda^{(k)}_{0,0} +
    \sum_{0<j\le d_2} w^j x^{n_1-d_1} z^{d_2-j} \lambda^{(k)}_{0,j}
    \right) \\ &= \sum_{0<i\le n_1-d_1} y^i x^{n_1+1-d_1-i} z^{d_2}
    \lambda^{(k)}_{i,0} + x^{n_1+1-d_1} z^{d_2} \lambda^{(k)}_{0,0} +
    \sum_{0<j\le d_2} w^j x^{n_1+1-d_1} z^{d_2-j} \lambda^{(k)}_{0,j},
  \end{align*}
  where each $\lambda^{(k)}_{i,j} \in \kk$.  Thus, \eqref{eq:xy}
  reduces to the following system:
  \[ \tag{$\dagger$} \label{dagger}
  \begin{cases}
    \begin{aligned}
      c^{(k)}_{n_1-d_1+1,0} &= \lambda^{(k-1)}_{n_1-d_1,0} &&\text{ if
      } i = n_1-d_1+1, \\
      c^{(k)}_{i,0} &= \lambda^{(k-1)}_{i-1,0} - \lambda^{(k)}_{i,0}
      &&\text{ if } 0 < i \le n_1-d_1, \\
      c^{(k)}_{0,j} &= \lambda^{(k-1)}_{0,j+1} - \lambda^{(k)}_{0,j}
      &&\text{ if } 0 \le j < d_2, \text{ and } \\
      c^{(k)}_{0,d_2} &= - \lambda^{(k)}_{0,d_2} &&\text{ if } j = d_2.
    \end{aligned}
  \end{cases}
  \]
  This system has the same general form as \eqref{ddagger} and can be
  solved in exactly the same way.
  
  Therefore, the system \eqref{dagger}, and thus \eqref{eq:xy}, can be
  solved when some $P_\ell \neq 0$ and we may proceed to studying
  equality \eqref{eq:qxy}.  It is easily seen from the basis $\cB$
  that multiplication-by-$x$ defines an injective $\kk$-linear map
  $A_{(i_1,i_2)} \to A_{(i_1+1,i_2)}$ between bigraded pieces of $A$
  when $0 \le i_1 < n_1-1$.  But $d_1 \ge 2$, so Lemma
  \ref{lem:T2helper} gives
  \[
  xQ_k = wP_k = w(y\pi_{k-1} - x\pi_k) = xz\pi_{k-1} - xw\pi_k =
  x(z\pi_{k-1} - w\pi_k - x^{n_1-k}y^{k-1}\rho),
  \]
  which implies that $\pi_0, \pi_1, \dotsc, \pi_{n_1}$ and $\rho$
  satisfy \eqref{eq:qxy}, for any choice of $\rho$.
  
  To finish, we observe that by symmetry, the same approach solves the
  analogous equations $(\ref{eq:xy}')$ and $(\ref{eq:qxy}')$ obtained
  from $(\ref{eq:xy})$ and $(\ref{eq:qxy})$ where the roles of $x,y$
  and $z,w$ are switched. To see that the solutions we obtain are
  consistent, note that $\rho$ is the only term appearing in both sets
  of equations $(\ref{eq:xy})$, $(\ref{eq:qxy})$ and $(\ref{eq:xy}')$,
  $(\ref{eq:qxy}')$, and that in every case we can solve these
  equations with $\rho = 0$.  Hence, $\psi \colon L_2 \to A$ factors
  through a map $\psi' \colon L_1 \to A$.
\end{proof}

\begin{proof}[Proof of Proposition \ref{prop:IisSmooth}]
  Combine Proposition \ref{prop:obs1} and Corollary
  \ref{cor:vanishing-T2}.
\end{proof}
% --------------------------------------------------------------------

% --------------------------------------------------------------------
% Section -- 
% --------------------------------------------------------------------
\section{Trivial Negative Tangents, II}
\label{sec:tnt2}

We continue our study of negative tangents, focusing on the family of
ideals described in Theorem \ref{thm:intro2}.  As before, let $I :=
\llrr{x,y}^{n_1} + \llrr{z,w}^{n_2} + \llrr{q}$, where $q := xz - yw$
and now $n_1, n_2 \ge 3$, and set $A := S/I$.
%% Up to this point, we have ignored the bigrading on $S$ defined on
%% monomials by $\bideg(\bx^{\bu}) := (u_1+u_2, u_3+u_4) \in \NN^2$
%% and inherited by $A$.
We begin with a simple observation.

\begin{lemma}
  \label{lem:socA}
  The socle $\Soc A$ is bigraded and equals $A_{(n_1-1,n_2-1)} =
  A_{n_1+n_2-2}$.
\end{lemma}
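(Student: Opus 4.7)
The plan is to prove the lemma in three pieces: bigradedness of $\Soc A$, the easy inclusion $A_{(n_1-1,n_2-1)}\subseteq \Soc A$, and the reverse inclusion $\Soc A\subseteq A_{(n_1-1,n_2-1)}$; the identification with $A_{n_1+n_2-2}$ then falls out from a direct count of bidegrees.

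First, since each generator of $I$ (namely $x^{n_1-k}y^k$, $z^{n_2-\ell}w^\ell$, and $q=xz-yw$) is bihomogeneous, $I$ is bigraded and so $A$ inherits the bigrading. Each multiplication map $v\cdot\colon A\to A$ for $v\in\{x,y,z,w\}$ is then bihomogeneous, hence $\Soc A = \bigcap_{v}\ker(v\cdot)$ is a bigraded submodule. The identification $A_{(n_1-1,n_2-1)}=A_{n_1+n_2-2}$ reduces to observing that $A_{(i,j)}=0$ whenever $i\ge n_1$ or $j\ge n_2$ (because $I$ contains $\llrr{x,y}^{n_1}$ and $\llrr{z,w}^{n_2}$), so the only bidegree $(i,j)$ with $i+j=n_1+n_2-2$ contributing to $A_{n_1+n_2-2}$ is $(n_1-1,n_2-1)$.

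For the inclusion $A_{(n_1-1,n_2-1)}\subseteq \Soc A$: multiplying any basis monomial in $\cB$ of bidegree $(n_1-1,n_2-1)$ by $x$ or $y$ lands in bidegree $(n_1,n_2-1)$ (hence in $\llrr{x,y}^{n_1}$), while multiplication by $z$ or $w$ lands in bidegree $(n_1-1,n_2)$ (hence in $\llrr{z,w}^{n_2}$); either way the result vanishes in $A$.

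For the reverse inclusion, since $\Soc A$ is bigraded it suffices to show $A_{(i,j)}\cap\Soc A=0$ whenever $(i,j)\neq (n_1-1,n_2-1)$ with $0\le i<n_1$ and $0\le j<n_2$. By the $(x,y)\leftrightarrow(z,w)$ symmetry we may assume $i<n_1-1$. The monomials in $\cB$ of bidegree $(i,j)$ form a $\kk$-basis of $A_{(i,j)}$. Multiplication by $x$ sends each such monomial $x^{u_1}y^{u_2}z^{u_3}w^{u_4}$ to $x^{u_1+1}y^{u_2}z^{u_3}w^{u_4}$; this preserves the constraint $u_2u_4=0$, and the bound $(u_1+1)+u_2=i+1\le n_1-1<n_1$ shows the image lies in $\cB$. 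Distinct basis elements of $A_{(i,j)}$ map to distinct basis elements of $A_{(i+1,j)}$, so $x\cdot$ is injective on $A_{(i,j)}$. Any $a\in A_{(i,j)}\cap\Soc A$ therefore satisfies $xa=0$, forcing $a=0$. The proof is quite direct and the only point requiring mild care is checking that multiplication by $x$ (or dually $z$) takes $\cB$-monomials to $\cB$-monomials, rather than forcing a reduction via $q$; this is automatic since the shifted exponents still satisfy $u_2u_4=0$.
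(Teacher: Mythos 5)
Your proof is correct and follows essentially the same strategy as the paper's: establish bigradedness of $\Soc A$, observe that the top bidegree is killed by all variables, and then show the reverse inclusion by proving multiplication by $x$ (resp.\ $z$) is injective in bidegrees $(i_1,i_2)$ with $i_1\le n_1-2$ (resp.\ $i_2\le n_2-2$). The only difference is that you spell out the injectivity by tracking the monomial basis $\cB$ explicitly --- in particular, checking that multiplication by $x$ takes $\cB$-monomials to $\cB$-monomials without triggering a reduction modulo $xz-yw$ --- whereas the paper asserts it. This is a useful detail to have verified, but it is the same argument.
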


%% Here, $A_{(n_1-1,n_2-1)}$ denotes the bidegree $(n_1-1,n_2-1)$
%% piece of $A$, whereas $A_{n_1+n_2-2}$ denotes the degree
%% $n_1+n_2-2$ piece (these coincide).
In other words, Lemma \ref{lem:socA} says that the socle of $A$ equals
the bidegree $(n_1-1,n_2-1)$ piece of $A$, which coincides with the
total degree $n_1+n_2-2$ piece of $A$.

\begin{proof}
  Since $\fmm_A$ is bigraded, $\Soc A$ is as well.  When $i_1 \le
  n_1-2$, multiplication-by-$x$ gives an injective map $A_{(i_1,i_2)}
  \to A_{(i_1+1,i_2)}$; when $i_2 \le n_2-2$, the
  multiplication-by-$z$ map $A_{(i_1,i_2)} \to A_{(i_1,i_2+1)}$ is
  injective.  It is clear that $x,y,z,w$ kill $A_{(n_1-1,n_2-1)}$,
  thus, we find that $\Soc A = A_{(n_1-1,n_2-1)} = A_{n_1+n_2-2}$.
\end{proof}

Let $J := I + \llrr{s}$, where $s \in S_{(n_1-1,n_2-1)} \smallsetminus
I$, and $B := S/J$, so there is a short exact sequence
\[
0 \longrightarrow J/I \longrightarrow A
\stackrel{\pi}{\longrightarrow} B \longrightarrow 0.
\]
By Proposition \ref{prop:tnt1}, we know that $I$ has trivial negative
tangents---we wish to show that $J$ has trivial negative tangents too.
We could proceed directly as in \S\ref{sec:tnt1}, performing
elementary computations; instead, we apply a standard long exact
sequence in tangent cohomology (see Remark \ref{rmk:graded} and
\cite[Theorem 3.5]{Hartshorne--2010}).  Namely, the pair of natural
ring maps $\kk \to A \to B$ leads to a long exact sequence containing
the following portion:
\begin{equation}
  \label{LES-T1}
  \dotsb \longrightarrow T^1(B/A,B) \longrightarrow T^1(B/\kk,B)
  \longrightarrow T^1(A/\kk,B) \longrightarrow \dotsb,
\end{equation}
which we use to show that $T^1(B/\kk,B)_{<0} = 0$.
%% Having already described a part of the truncated cotangent complex
%% of $\kk \to A$ in \S\ref{sec:obs1}, w

\begin{lemma}
  \label{lem:LES-T1}
  We have $T^1(B/A,B)_{<0} = 0$ and $T^1(A/\kk,B)_{<0} = 0$.
\end{lemma}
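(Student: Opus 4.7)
The plan is to establish each vanishing separately; by the long exact sequence \eqref{LES-T1}, the two together yield the goal $T^1(B/\kk,B)_{<0}=0$ mentioned just after the lemma.

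For $T^1(B/A,B)_{<0}=0$: by Lemma \ref{lem:socA}, $\bar s\in\Soc A$ has bidegree $(n_1-1,n_2-1)$, so $\bar s^2=0$ and $\Ann_A(\bar s)=\fmm_A$. The truncated cotangent complex of $A\to B$ thus collapses: taking $R_{B/A}=A$ and $F_{B/A}=A(-(n_1+n_2-2))$ (single generator, so $\Kos=0$), we get $L_0=0$, $L_1=B(-(n_1+n_2-2))$, and $L_2=\fmm_A(-(n_1+n_2-2))$. Computing cohomology gives
\[
T^1(B/A,B)_j \;\cong\; \Hom_B(J/I,B)_j \;\cong\; \Soc(B)_{j+n_1+n_2-2},
\]
reducing the claim to $\Soc(B)_i=0$ for $i<n_1+n_2-2$.

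To establish this, I would perform a bidegree analysis in the spirit of Lemma \ref{lem:socA}. A bihomogeneous lift $b\in A_{(i_1,i_2)}$ of a candidate $\bar b\in\Soc(B)$ satisfies $\fmm_A b\subseteq \kk\bar s$. Since $\bar s$ has bidegree $(n_1-1,n_2-1)$, for each of $xb,yb,zb,wb$ either the image lies in $\kk\bar s$ (only possible if the corresponding bidegree matches) or vanishes in $A$; comparing bidegrees forces $(i_1,i_2)\in\{(n_1-1,n_2-1),(n_1-2,n_2-1),(n_1-1,n_2-2)\}$. The first case gives $|b|=n_1+n_2-2$, contrary to assumption, so $|b|=n_1+n_2-3$. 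The main obstacle is then to show that any bihomogeneous $b\in A_{(n_1-2,n_2-1)}$ with $xb,yb\in\kk\bar s$ must vanish; the other bidegree follows by symmetry. Here $zb,wb$ have bidegree $(n_1-2,n_2)$, which vanishes automatically in $A$ as any such monomial lies in $\llrr{z,w}^{n_2}\subseteq I$. Writing $b$ and $\bar s$ in the basis $\cB$---using $yw=xz$ to reduce $y\cdot b$ back to $\cB$---and setting $xb=c_1\bar s,\ yb=c_2\bar s$ produces a coupled telescoping linear system in the coefficients. A case analysis on $(c_1,c_2)$ then finishes: if either $c_i$ vanishes, one uses the easily verified injectivity of multiplication by $x$, resp.~$y$, from $A_{(n_1-2,n_2-1)}$ to $A_{(n_1-1,n_2-1)}$; if both are nonzero, the telescope cascades through all coefficients of $\bar s$ and forces $\bar s=0$, contradicting the hypothesis $\bar s\neq 0$.

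For $T^1(A/\kk,B)_{<0}=0$: I would apply the long exact sequence in $T^\bullet(A/\kk,-)$ arising from the short exact sequence $0\to J/I\to A\to B\to 0$ of $A$-modules:
\[
\cdots \to T^1(A/\kk,A)\to T^1(A/\kk,B)\to T^2(A/\kk,J/I)\to\cdots.
\]
The term $T^1(A/\kk,A)_{<0}$ vanishes by Proposition \ref{prop:Negative-tangents-for-I}, so it remains to show $T^2(A/\kk,J/I)_{<0}=0$. Identifying $J/I\cong \kk(-(n_1+n_2-2))$ as a graded $A$-module, this reduces to $T^2(A/\kk,\kk)_i=0$ for $i<-(n_1+n_2-2)$. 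But $T^2(A/\kk,\kk)$ is a subquotient of $\Hom_A(L_2,\kk)=\Hom_\kk(L_2/\fmm_A L_2,\kk)$, and by Lemma \ref{lem:L2gens} the minimal generators of $L_2$ live in degrees $n_1+1$ and $n_2+1$, so $\Hom_A(L_2,\kk)$ is concentrated in degrees $-(n_1+1)$ and $-(n_2+1)$. Both of these are at least $-(n_1+n_2-2)$ since $n_1,n_2\ge 3$, giving the required vanishing.
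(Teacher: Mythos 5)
Your proof is correct, and for the first half it coincides with the paper's. For $T^1(B/A,B)_{<0}=0$, both arguments take $R_{B/A}=A$, $F_{B/A}=A(-n_1-n_2+2)$, note $Q_{B/A}=\Ann_A(\bar s)(-n_1-n_2+2)=\fmm_A(-n_1-n_2+2)$ and $\Kos_{B/A}=0$, conclude $T^1(B/A,B)\cong(\Soc B)(n_1+n_2-2)$, and reduce to $\Soc B=B_{n_1+n_2-2}$. The paper isolates this last reduction as Lemma~\ref{lem:socB}, proved immediately afterward; your sketched bidegree analysis (three possible bidegrees, injectivity of $[x]$ and $[y]$ on $A_{(n_1-2,n_2-1)}$, telescoping when both $c_i\neq 0$) is exactly the ``direct computation in the basis $\cB$'' the paper appeals to, and I verified it checks out.

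For $T^1(A/\kk,B)_{<0}=0$ you take a genuinely different route. The paper lifts a negative-degree cocycle $\psi\colon L_1\to B$ to $\widetilde\psi\colon L_1\to A$ using the graded isomorphisms $A_j\cong B_j$ for $j<n_1+n_2-2$, checks $\widetilde\psi\circ d_2^L=0$ by another degree comparison, and then invokes Proposition~\ref{prop:Negative-tangents-for-I}. You instead apply the coefficient long exact sequence in $T^\bullet(A/\kk,-)$ for $0\to J/I\to A\to B\to 0$ (\cite[Theorem~3.4]{Hartshorne--2010}, graded per Remark~\ref{rmk:graded}), which reduces the claim to $T^1(A/\kk,A)_{<0}=0$ together with $T^2\bigl(A/\kk,\kk(-n_1-n_2+2)\bigr)_{<0}=0$. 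The latter follows because $L_2$ is generated in degrees $n_1+1,n_2+1$ by Lemma~\ref{lem:L2gens}, so $\Hom_A(L_2,\kk)=\Hom_\kk(L_2\otimes_A\kk,\kk)$ and its quotient $T^2(A/\kk,\kk)$ are concentrated in degrees $-(n_1+1),-(n_2+1)$, both $\geq-(n_1+n_2-2)$ once $n_1,n_2\geq 3$. Both arguments use $n_1,n_2\geq 3$ in essentially the same spot (a degree inequality). Your LES-plus-generator-degree version is a bit more structural and avoids the explicit cocycle-lifting bookkeeping; the paper's lifting is more hands-on and adapts directly to the iterated setting of Corollary~\ref{cor:tnt2}, where extra generators $(s_i;)$ force a tailored bidegree case analysis that the LES shortcut would not by itself handle.
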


\begin{proof}
  We examine $T^1(A/\kk,B)$ first, using the notation of
  \S\ref{sec:obs1}.  The truncated cotangent complex of $\kk \to A$
  %% equals \matt{let's write this out in Section 4 as a displayed equation
  %%   and then we can reference it}
  %% \[
  %% L_{\bullet} \colon \Omega_{S/\kk} \otimes_S A \longleftarrow F/IF
  %% \longleftarrow Q/\Kos = \cF_{2}/\Kos',
  %% \]
  %% where $\cF_{\bullet}$ is a minimal free resolution of $A$ over $S$ and
  %% $Q/\Kos$ is described by Lemma \ref{lem:L2gens}.  Observe that
  %% $\cF_{\bullet}$ inherits the bigrading;
  %% %% of $A$;
  %% moreover, $L_{\bullet}$ is bigraded, as is seen from the generators
  %% and the definition of the differential.
  %% where $\cF_{\bullet}$ is a free resolution of $A$, and i
  is described by \eqref{cotcplxAk}.  As $L_0 \cong A(-1)^4$ in the
  standard grading, the $B$-dual is
  \[
  \Hom_A(L_{\bullet},B) \colon B(1)^4 \longrightarrow \Hom_{A}(F/IF,B)
  \longrightarrow \Hom_A(\cF_{2}/\Kos',B).
  \]

  Let $\psi \colon L_1 = F/IF \to B$ represent an element of
  $T^1(A/\kk,B)_i$ with $i < 0$; $\psi$ is determined by its values on
  the generators $(g;)$ of $L_1$ (listed just after
  \eqref{cotcplxAk}).  Because the degrees satisfy $|\psi(g;)| = |g|+i
  < n_1+n_2-2 = |s|$, the map $\pi$ identifies $A_{|\psi(g;)|}$ with
  $B_{|\psi(g;)|}$.  This allows us to define an $A$-linear map
  $\widetilde{\psi} \colon L_1 \to A$ satisfying $\psi = \pi \circ
  \widetilde{\psi}$ via $\widetilde{\psi}(g;) := \psi(g;)$, for each
  $g$. Checking degrees also shows that $\widetilde{\psi} \circ
  d_2^{L} = 0$:~for instance, $|\psi \circ d_2^L(x^{n_1-k}y^k;x)| =
  n_1+1+i < n_1+n_2-2$ implies $\widetilde{\psi} \circ
  d_2^{L}(x^{n_1-k}y^k;x) = \psi \circ d_2^{L}(x^{n_1-k}y^k;x) = 0$;
  other generators similarly vanish.  This means $\widetilde{\psi}$
  defines an element of $T^1(A/\kk,A)_i$.  We know $T^1(A/\kk,A)_{<0}
  = 0$, by Proposition \ref{prop:tnt1}, so
  %% either $\widetilde{\psi} = 0$ or
  $\widetilde{\psi}$ is a $\kk$-linear combination of the trivial
  tangents $\partial_x, \partial_y, \partial_z, \partial_w$.  The
  identification of $\widetilde{\psi}(g;)$ with $\psi(g;)$, for each
  $g$, then implies $\psi$ is a $\kk$-linear combination of the
  trivial tangents.  Thus, we have $T^1(A/\kk,B)_{<0} = 0$.

  %% an additional long exact sequence (namely \cite[Theorem
  %%   3.4]{Hartshorne--2010} applied to the projection $\pi$) with
  %% relevant portion:
  %% \[
  %% \dotsb \longrightarrow T^1(A/\kk,A) \longrightarrow T^1(A/\kk,B)
  %% \longrightarrow T^2(A/\kk,J/I) \longrightarrow \dotsb.
  %% \]
  %% We know that $T^1(A/\kk,A)_{<0} = 0$, by \S\ref{sec:tnt1}.  Observe
  %% that $J/I = \llrr{s+I} \cong \kk(-n_1-n_2+2)$ is a copy of $\kk$ with
  %% generator in degree $n_1+n_2-2$, as $s +I \in \Soc A$.  Thus, by
  %% definition of tangent cohomology, $T^2(A/\kk,J/I)$ is a quotient of
  %% $\Hom_{A}(Q/Kos, \kk(-n_1-n_2+2))$, where $Q/Kos = L_2$
  %% %%  for the map $\kk \to A$
  %% is as described in \S\ref{sec:obs1}.  Lemma \ref{lem:L2gens} shows
  %% that $Q/Kos$ is generated in degrees $n_1+1$ and $n_2+1$.  Assuming
  %% for the moment that $n_1 \le n_2$, this implies that $T^2(A/\kk,J/I)$
  %% can only be nontrivial in degrees at least $n_1+n_2-2 - n_2 - 1 = n_1
  %% - 3 \ge 0$.  This shows that $T^1(A/\kk,A)_{<0} = 0 =
  %% T^2(A/\kk,J/I)_{<0}$ and we conclude that $T^1(A/\kk,B)_{<0} = 0$.

  We examine $T^1(B/A,B)$ next, forming the truncated cotangent
  complex of
  %% the map $\pi \colon A \to B$
  $\pi$.  As $\pi$ is
  %% already a
  surjective, we set $R_{B/A} = A$; then we may choose $F_{B/A} =
  A(-n_1-n_2+2)$ as $J/I = \llrr{s +I}$ is principal. Next, $Q_{B/A} =
  \fmm_A(-n_1-n_2+2)$ holds, because $s +I \in \Soc A$;
  %%  belongs to the socle of $A$;
  this also guarantees that $\Kos_{B/A} = 0$.  Finally,
  $\Omega_{R_{B/A}/A} = \Omega_{A/A} = 0$, so we see that the
  truncated cotangent complex equals
  \[
  L_{B/A,\bullet} \colon 0 \longleftarrow B(-n_1-n_2+2) \longleftarrow
  \fmm_A(-n_1-n_2+2),
  \]
  where the differential
  %% $d_2^{B/A}(n_1+n_2-2) = \pi|_{\fmm_A} \colon \fmm_A \to B$ is
  %% induced by $Q^{B/A} \subseteq F^{B/A}$.
  $d_2^{B/A}$ is a twist of $\pi|_{\fmm_A} \colon \fmm_A \to B$.
  %% This implies that $T^1(B/A,B)_{<0}$ is a subspace of
  %% $\Hom_B(B(-n_1-n_2+2),B)_{<0} = B(n_1+n_2-2)_{<0} =
  %% B_{<n_1+n_2-2}$. On the other hand,
  This implies
  \begin{align*}
    T^1(B/A,B) = \ker d_{B/A}^1 &= \{ \varphi \colon B(-n_1-n_2+2) \to
    B \mid \varphi \circ d_2^{B/A} = 0 \} \\ &= \{ \varphi \colon
    B(-n_1-n_2+2) \to B \mid \varphi|_{\fmm_B} = 0 \} \cong (\Soc
    B)(n_1+n_2-2),
  \end{align*}
  so that $T^1(B/A,B)_{<0} = 0$ holds by the following lemma.
\end{proof}

\begin{lemma}
  \label{lem:socB}
  The socle $\Soc B$ is bigraded and equals $B_{(n_1-1,n_2-1)} =
  B_{n_1+n_2-2}$.
\end{lemma}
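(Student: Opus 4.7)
The first claim, that $\Soc B$ is bigraded, is immediate: since $I$ is bigraded and $s \in S_{(n_1-1,n_2-1)}$ is bihomogeneous, the ideal $J$ is bigraded, hence $B$ and $\fmm_B$ are bigraded, and so is $\Soc B = (0 :_B \fmm_B)$. The identity $B_{(n_1-1,n_2-1)} = B_{n_1+n_2-2}$ also follows at once, since the monomial basis of $B$ only supports bidegrees $(i_1,i_2)$ with $i_1 \le n_1-1$ and $i_2 \le n_2-1$, and $(n_1-1,n_2-1)$ is the unique such bidegree of total degree $n_1+n_2-2$. The inclusion $B_{(n_1-1,n_2-1)} \subseteq \Soc B$ is a direct consequence of Lemma \ref{lem:socA}: any element of $A_{(n_1-1,n_2-1)} = \Soc A$ is killed by $\fmm_A$, hence its image in $B$ is killed by $\fmm_B$.

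For the reverse inclusion, I would take a bihomogeneous $b \in \Soc B$ of bidegree $(i_1,i_2)$ and lift it to $a \in A_{(i_1,i_2)}$. Because $s+I \in \Soc A$, one checks $\fmm_A \cdot (s+I) = 0$, so $J/I = \kk \cdot (s+I)$ is a one-dimensional $\kk$-subspace of $A$ concentrated in bidegree $(n_1-1,n_2-1)$. The conditions $xb = yb = zb = wb = 0$ in $B$ translate to $xa, ya, za, wa \in \kk \cdot (s+I)$, so each of these four products either vanishes or has bidegree $(n_1-1,n_2-1)$. If all four vanish then $a \in \Soc A = A_{(n_1-1,n_2-1)}$, forcing $(i_1,i_2) = (n_1-1,n_2-1)$; otherwise bidegree-matching restricts $(i_1,i_2)$ to lie in $\{(n_1-2,n_2-1),\, (n_1-1,n_2-2)\}$.

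By symmetry between $(x,y)$ and $(z,w)$ it suffices to rule out $(i_1,i_2) = (n_1-2,n_2-1)$. In this bidegree $za, wa$ lie in $A_{(n_1-2,n_2)} = 0$ automatically (since $I \supseteq \llrr{z,w}^{n_2}$), so the constraints reduce to $xa = \alpha(s+I)$ and $ya = \beta(s+I)$ for some $\alpha, \beta \in \kk$, giving $(\beta x - \alpha y)a = 0$ in $A$. The key technical step is the following injectivity claim: for every $(\alpha,\beta) \in \kk^2 \setminus \{(0,0)\}$, multiplication by $\beta x - \alpha y$ defines an injective $\kk$-linear map $A_{(n_1-2,n_2-1)} \to A_{(n_1-1,n_2-1)}$. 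Granting this, $(\alpha,\beta) \neq (0,0)$ forces $a = 0$; and if $(\alpha,\beta) = (0,0)$ then $xa = ya = 0$, and the $(\alpha,\beta) = (0,1)$ case of the claim again gives $a = 0$. Hence $b = 0$.

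The claim itself is a direct computation. Using Lemma \ref{l:decomposition-of-polynomials}, a general $a \in A_{(n_1-2,n_2-1)}$ expands as
\[
a = \sum_{i=0}^{n_1-2} c_i x^{n_1-2-i} y^i z^{n_2-1} + \sum_{j=1}^{n_2-1} d_j x^{n_1-2} z^{n_2-1-j} w^j.
\]
Computing $xa$ and $ya$ in the monomial basis of $A_{(n_1-1,n_2-1)}$, using $yw + I = xz + I$ to rewrite the $yw^j$-terms, the vanishing of $(\beta x - \alpha y)a$ produces a triangular system in the coefficients $c_i$ and $d_j$, anchored at the two extremal monomials $y^{n_1-1} z^{n_2-1}$ and $x^{n_1-1} w^{n_2-1}$, which forces all coefficients to vanish. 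The main obstacle is organizing the bookkeeping cleanly around the shared basis element $x^{n_1-1} z^{n_2-1}$, which couples the two recurrences; however, the boundary equations from the extremal monomials supply enough additional constraints to force the trivial solution in every case $\alpha = 0$, $\beta = 0$, or $\alpha\beta \neq 0$.
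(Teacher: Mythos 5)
Your proof is correct and follows essentially the same route as the paper's: the crux in both is to lift $b \in \Soc B$ of bidegree $(n_1-2,n_2-1)$ to $a \in A$, observe that $xa$ and $ya$ are scalar multiples of $s+I$ and hence of each other, and then carry out a direct computation in the monomial basis $\cB$ to force $a=0$. Where the paper merely asserts "a direct computation in the basis $\cB$ then shows that $b=0$," you have usefully packaged that computation as an injectivity statement — multiplication by $\beta x - \alpha y$ is injective from $A_{(n_1-2,n_2-1)}$ to $A_{(n_1-1,n_2-1)}$ for $(\alpha,\beta)\neq(0,0)$ — and verified it by exhibiting the triangular system in the $c_i, d_j$ anchored at the extremal monomials $y^{n_1-1}z^{n_2-1}$ and $x^{n_1-1}w^{n_2-1}$. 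That is exactly the content of the paper's unstated computation, and the triangular structure does indeed force all coefficients to vanish in each of the three cases. Your handling of the remaining bidegrees via the fact that $J/I$ is concentrated in bidegree $(n_1-1,n_2-1)$ is a slightly cleaner organization than the paper's preliminary injectivity remark, but the substance is the same.
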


\begin{proof}
  %% Up to this point, we have ignored the natural bigrading on $S$
  %% defined on monomials by $\bideg(\bx^{\bu}) := (u_1+u_2, u_3+u_4) \in
  %% \NN^2$ and inherited by $A$ and $B$.  
  As $B$ and $\fmm_B$ are bigraded, so $\Soc B$ is bigraded.  When
  $i_1 \le n_1-2$ and $i_2 \le n_2-2$, the multiplication-by-$x$ and
  -$y$ maps $[x], [y] \colon B_{(i_1,i_2)} \to B_{(i_1+1,i_2)}$ and
  the multiplication maps $[z],[w] \colon B_{(i_1,i_2)} \to
  B_{(i_1,i_2+1)}$ are injective.  When $(i_1,i_2) = (n_1-2,n_2-1)$,
  the kernels of $[x]$ and $[y]$ have dimension at most $1$.
  %% ; a direct computation in the basis $\cB$ shows that if $0 \neq b
  %% \in B_{(n_1-2,n_2-1)}$ satisfies $xb = 0$, then $yb \neq 0$.
  Suppose that $b \in B_{(n_1-2,n_2-1)}$ satisfies $xb = yb = 0$;
  treating $b$ as an element of $A$, this means $xb$ and $yb$ are
  scalar multiples of $s$ and thus of each other; a direct computation
  in the basis $\cB$ then shows that $b=0$.  A similar occurrence
  holds for $[z]$ and $[w]$, when $(i_1,i_2) = (n_1-1,n_2-2)$.
  %% $\kk\llrr{x^{n_1-2}z^{n_2-1} + J}$; when $(i,j) = (n_1-1,n_2-2)$,
  %% multiplication-by-$z$ has kernel $\kk\llrr{x^{n_1-1}z^{n_2-2} +J}$.
  Thus, we find that $\Soc B = B_{(n_1-1,n_2-1)} = B_{n_1+n_2-2}$.
\end{proof}

This proves the following.

\begin{proposition}
  \label{prop:tnt2}
  The ideal $J$ has trivial negative tangents.
\end{proposition}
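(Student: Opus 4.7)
The plan is to read off the conclusion directly from the long exact sequence \eqref{LES-T1} in tangent cohomology combined with the vanishing statements already established in Lemma \ref{lem:LES-T1}. Specifically, the pair of ring homomorphisms $\kk \to A \to B$ gives rise to the graded long exact sequence containing
\[
T^1(B/A, B) \longrightarrow T^1(B/\kk, B) \longrightarrow T^1(A/\kk, B),
\]
and by Remark \ref{rmk:graded} this sequence is compatible with the $\ZZ$-grading on the tangent cohomology modules inherited from the standard grading on $S$. Restricting to strictly negative degrees yields the exact sequence
\[
T^1(B/A, B)_{<0} \longrightarrow T^1(B/\kk, B)_{<0} \longrightarrow T^1(A/\kk, B)_{<0},
\]
and Lemma \ref{lem:LES-T1} supplies the vanishing of both flanking terms. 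Exactness forces $T^1(B/\kk, B)_{<0} = 0$, which is by definition the statement that $J$ has trivial negative tangents.

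Because Lemma \ref{lem:LES-T1} has absorbed essentially all of the substantive content, no serious obstacle remains in the proof of the proposition itself. The only point worth verifying carefully is that the LES truly respects the grading of interest here, but this is precisely the content of Remark \ref{rmk:graded} together with the observation that all of $A$, $B$, $J/I$, and the chosen resolutions can be taken standard graded in a manner compatible with the maps between them. The conceptual gain over a direct computation (which would parallel but complicate the analysis of Section \ref{sec:tnt1}) is that the single extra socle generator $s$ enters only through the module $J/I \cong \kk \cdot (s+I)$, and its contribution is controlled by the twist of $\Soc B$ appearing in the identification used to bound $T^1(B/A,B)$ in Lemma \ref{lem:LES-T1}.
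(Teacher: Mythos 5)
Your proof is correct and coincides with the paper's: both invoke the long exact sequence \eqref{LES-T1} for $\kk \to A \to B$, apply Lemma \ref{lem:LES-T1} to kill the two flanking terms $T^1(B/A,B)_{<0}$ and $T^1(A/\kk,B)_{<0}$, and conclude $T^1(B/\kk,B)_{<0}=0$ by exactness. The remark about the grading of the long exact sequence is exactly the role Remark \ref{rmk:graded} plays in the paper, so nothing is missing.
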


\begin{proof}
  Lemmas \ref{lem:socA}, \ref{lem:LES-T1}, and \ref{lem:socB} show
  that $T^1(B/A,B)_{<0} = 0 = T^1(A/\kk,B)_{<0}$, proving that
  $T^1(B/\kk,B)_{<0} = 0$ via the long exact sequence \eqref{LES-T1}.
\end{proof}

%% \begin{corollary}
%%   \label{cor:tnt2}
%%   %% Let $I := \llrr{x,y}^{n_1} + \llrr{z,w}^{n_2} + \llrr{q}$, for
%%   %% $n_1, n_2 \ge 3$, and
%%   The ideal $J := I + \llrr{s}$, where $s \in \Soc A$,
%%   %% .  The ideal $J$
%%   has trivial negative tangents.
%% \end{corollary}

%% \begin{proof}
%%   %% The socle of a graded module is graded, so
%%   As in Lemma \ref{lem:socB}, $\Soc A$ is (bi)graded and equals
%%   $A_{(n_1-1,n_2-1)} = A_{n_1+n_2-2}$.  This ensures that
%%   $T^1(A/\kk,B)_{<0} = 0$ and that $B$ is bigraded.  To see that
%%   $T^1(B/A,B)_{<0} = 0$, observe that Lemma \ref{lem:socB} still
%%   holds: the intersection of the kernels of mulitplication-by-$x$ and
%%   multiplication-by-$z$ equals $B_{(n_1-1,n_2-1)} = B_{n_1+n_2-2}$.
%%   Hence, $T^1(B/\kk,B)_{<0} = 0$ holds by the first long exact
%%   sequence.
%%   %% The multiplication-by-$x$ map
%%   %% $A_i \to A_{i+1}$ is injective, for $0 \le i \le n_1-2$ and
%%   %% similarly for the multiplication-by-$z$ map when $0 \le i \le
%%   %% n_2-2$.  This implies that any $s \in \Soc A$ has bi-degree $(n_1-1,
%%   %% n_2-1)$, where $\bideg(\bx^{\bu}) := (u_1+u_2, u_3+u_4) \in \NN^2$.
%%   %% %% Assuming that $n_1 \le n_2$, 
%%   %% Thus, the proof of Proposition \ref{prop:tnt2} applies.
%%   %% %% , so long as $n_1 + n_2 - 2 - n_2 - 1 = n_1 - 3 \ge 0$ when
%%   %% %% showing $T^2(A/\kk, J/I)_{<0} = 0$.
%% \end{proof}

%% \begin{example}
%%   For the sake of concreteness, one such ideal is
%%   \[
%%   J = \llrr{x,y}^{n_1} + \llrr{z,w}^{n_2} + \llrr{xz-yw,
%%     x^{n_1-1}z^{n_2-1}},
%%   \]
%%   for $n_1, n_2 \ge 3$.
%% \end{example}

We show next that the above arguments
%leading to Proposition \ref{prop:tnt2}
can oftentimes be iterated.  This is done after a preliminary lemma.

\begin{lemma}
  \label{l:socle-r-condition->socle-i-condition}
  Let $s_1, s_2, \dotsc, s_r \in \Soc A$ and $A^{(i)} = A / \llrr{s_1,
    s_2, \dotsc, s_i}$. If the socle of $A^{(r)}$ satisfies $\Soc
  A^{(r)} = A^{(r)}_{(n_1-1, n_2-1)}$, then $\Soc A^{(i)} =
  A^{(i)}_{(n_1-1, n_2-1)}$ holds, for all $1 \leq i \leq r$.
\end{lemma}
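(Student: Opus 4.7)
The plan is to reduce to a bidegree-chasing argument via the quotient map $\pi_{i,r}\colon A^{(i)} \twoheadrightarrow A^{(r)}$. First I would observe that since each $s_j$ represents an element of $\Soc A$, Lemma \ref{lem:socA} forces its image in $A$ to be bigraded homogeneous of bidegree $(n_1-1,n_2-1)$. Consequently every $A^{(i)}$ inherits the bigrading from $A$, and since $\fmm$ is bigraded, $\Soc A^{(i)}$ is bigraded as well, so it suffices to test the equality $\Soc A^{(i)} = A^{(i)}_{(n_1-1,n_2-1)}$ on bihomogeneous elements.

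The containment $A^{(i)}_{(n_1-1,n_2-1)} \subseteq \Soc A^{(i)}$ is immediate: this piece already lies in $\Soc A$ by Lemma \ref{lem:socA}, since multiplying by any of $x,y,z,w$ pushes the bidegree out of the range $u_1<n_1$, $u_2<n_2$ and so yields $0$ already in $A$. For the reverse inclusion, I would take a bihomogeneous $b \in \Soc A^{(i)}$ of bidegree $(u_1,u_2) \ne (n_1-1,n_2-1)$ and aim to conclude $b=0$. Its image $\overline b \in A^{(r)}$ is killed by $\fmm$, hence lies in $\Soc A^{(r)} = A^{(r)}_{(n_1-1,n_2-1)}$ by hypothesis; but $\overline b$ is bihomogeneous of bidegree $(u_1,u_2)$, so $\overline b = 0$ and thus $b \in \ker \pi_{i,r} = \llrr{s_{i+1},\dots,s_r}\, A^{(i)}$.

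The final step is the key observation that each $s_j$ is annihilated by $\fmm$ already in $A$, hence in $A^{(i)}$. Therefore the ideal $\llrr{s_{i+1},\dots,s_r}\, A^{(i)}$ collapses to the $\kk$-span of the images of $s_{i+1},\dots,s_r$, and this span lies entirely in $A^{(i)}_{(n_1-1,n_2-1)}$. Since $b$ is bihomogeneous of a different bidegree, this forces $b=0$, completing the proof. There is no serious obstacle here: the argument is a clean bigrading chase, and the only point deserving care is the verification that $\ker \pi_{i,r}$ collapses to a $\kk$-span concentrated in the top bidegree, which follows precisely because socle elements absorb multiplication trivially.
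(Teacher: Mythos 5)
Your proof is correct, and it rests on the same key observation the paper uses: because $s_{i+1},\dots,s_r$ are socle elements of $A$ (hence still socle elements after passing to the quotient), the kernel of the quotient map is just a $\kk$-span concentrated in bidegree $(n_1-1,n_2-1)$. The one organizational difference is that you map directly from $A^{(i)}$ to $A^{(r)}$ and use the full kernel $\llrr{s_{i+1},\dots,s_r}A^{(i)}$, while the paper runs a backwards induction on $i$, going one step at a time so that the relevant kernel is the single one-dimensional span $\kk\cdot\overline{s_{i+1}}$. Both arguments are valid; yours dispenses with the induction at the small cost of having to note that the span of several socle elements is still concentrated in the top bidegree, which is immediate. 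The substance is identical.
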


\begin{proof}
  We have surjections
  \[
  A = A^{(0)}\xrightarrow{\pi_0} A^{(1)}\xrightarrow{\pi_1}\dotsb
  \xrightarrow{\pi_{r-1}} A^{(r)}.
  \]
  We already know $\Soc A = A_{(n_1-1, n_2-1)}$ by Lemma
  \ref{lem:socA}. In particular, for all $i$, the image of $s_i$ in
  $A^{(i-1)}$ is contained in $A^{(i-1)}_{(n_1-1, n_2-1)}$.

  We prove the lemma by backwards induction on $i$. We have the
  following diagram
  \[
  \xymatrix{ \Soc A^{(i)}\ar[r]^-{\pi_i} & \Soc
    A^{(i+1)}\\ A^{(i)}_{(n_1-1,n_2-1)}\ar[r]\ar@{^{(}->}[u] &
    A^{(i+1)}_{(n_1-1,n_2-1)}\ar@{^{(}->}[u]\text{.}}
  \]
  If $A^{(i)}_{(n_1-1,n_2-1)} \neq \Soc A^{(i)}$, then there exists $s
  \in \Soc A^{(i)}$ with bidegree $(k,\ell) \neq (n_1-1,n_2-1)$. But
  then $\pi_i(s)\in A^{(i+1)}_{(k,\ell)}\cap\Soc A^{(i+1)} =
  0$. Therefore, $s$ is a scalar multiple of the image of $s_{i+1}$,
  which we know is in $A^{(i)}_{(n_1-1, n_2-1)}$, giving a
  contradiction.
\end{proof}

\begin{corollary}
  \label{cor:tnt2}
  %% Let $s_1, s_2, \dotsc, s_{r} \in S_{(n_1-1,n_2-1)} \setminus I$
  %% reduce to distinct socle elements of $S/I$, where $I :=
  %% \llrr{x,y}^{n_1} + \llrr{z,w}^{n_2} + \llrr{q}$, for $n_1, n_2
  %% \ge 3$, and let $A^{(i)} := S/(I + \llrr{s_1, s_2, \dotsc,
  %% s_{i}})$, for $1 \le i \le r$.  If $\Soc A^{(i)} =
  %% A^{(i)}_{n_1+n_2-2}$ holds, for all $1 \le i \le r$, then
  %% $A^{(i)}$ has trivial negative tangents.
  %%   Let $s_1, s_2, \dotsc, s_{r} \in S$, $I^{(0)} := I$,
  %%   %% _{(n_1-1,n_2-1)} \setminus I$
  %%   and $I^{(i)} := I + \llrr{s_1, s_2, \dotsc, s_i}$, for $1 \le i \le
  %%   r$.  Suppose that, for all $0 \le i \le r-1$,
  %%   \begin{itemize}
  %%   \item the ideal $I^{(i)}$ has trivial negative tangents,
  %%   \item the element $s_{i+1} + I^{(i)} \in \Soc A^{(i)}$ is nonzero,
  %%     where $A^{(i)} := S/I^{(i)}$, and
  %%   \item the socle satisfies $\Soc A^{(i+1)} = A^{(i+1)}_{(n_1-1,
  %%     n_2-1)}$.
  %%   \end{itemize}
  %%   Then $I^{(r)}$ has trivial negative tangents.
  %%   %% If $S/(I + \llrr{s_1, s_2, \dotsc, s_{r-1}})$ has trivial
  %%   %% negative tangents and $\Soc B = B_{n_1+n_2-2}$ holds, then $B$
  %%   %% has trivial negative tangents.
  %% \end{corollary}
%  
  %% \begin{proof}
  %%  Proposition \ref{prop:tnt2} handles the case $r = 1$, so let $r > 1$.
  %%  Set $I' = I^{(r-1)}$, $J' = I^{(r)}$, $A' = S/I'$, $B' = S/J'$, and
  %%  $\pi' \colon A' \to B'$.
  Let $s_1, s_2, \dotsc, s_{r} \in S$,
  \[
  J' = I + \llrr{s_1, s_2, \dotsc, s_{r}}
  \]
  and $B' = S/J'$. Assume every $s_i+I\in \Soc A$ and $\Soc B' =
  B'_{(n_1-1,n_2-1)}$. Then $J'$ has trivial negative tangents.
 \end{corollary}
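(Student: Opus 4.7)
The plan is to adapt the strategy of Proposition \ref{prop:tnt2} from a single socle element to several, by comparing $A$ and $B'$ directly in one step via the long exact sequence in tangent cohomology (cf.~\cite[Theorem 3.5]{Hartshorne--2010} and Remark \ref{rmk:graded}) associated to the composite $\kk \to A \to B'$:
\begin{equation*}
\cdots \longrightarrow T^1(B'/A, B') \longrightarrow T^1(B'/\kk, B') \longrightarrow T^1(A/\kk, B') \longrightarrow \cdots .
\end{equation*}
It then suffices to show both outer terms vanish in strictly negative degrees.

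For $T^1(B'/A, B')_{<0} = 0$, I would build the truncated cotangent complex of the surjection $\pi \colon A \twoheadrightarrow B'$ directly, as in the proof of Lemma \ref{lem:LES-T1}. Writing $\bar{s}_j := s_j + I$, every generator of the kernel $K = \llrr{\bar{s}_1, \dots, \bar{s}_r}$ lies in $\Soc A$ by hypothesis, so taking $F_{B'/A} = \bigoplus_{j=1}^r A(-(n_1+n_2-2))$ gives $\fmm_A \cdot F_{B'/A} \subseteq Q_{B'/A}$. An element of $T^1(B'/A, B')$ is therefore represented by a tuple $(b_1, \dots, b_r)$ with each $b_j \in \Soc B'$ and respecting any $\kk$-linear relations in $\Soc A$ present among the $\bar{s}_j$. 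The hypothesis $\Soc B' = B'_{(n_1-1, n_2-1)}$ concentrates $\Soc B'$ in total degree $n_1+n_2-2$, so a cocycle of degree $d < 0$ has each $b_j \in (\Soc B')_{n_1+n_2-2+d} = 0$, yielding the desired vanishing.

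For $T^1(A/\kk, B')_{<0} = 0$, I would adapt the degree-lifting argument from Lemma \ref{lem:LES-T1}. Since $K \subseteq A_{n_1+n_2-2}$, the map $\pi \colon A_k \to B'_k$ is an isomorphism whenever $k \neq n_1+n_2-2$. Any cocycle $\psi \colon L_1^{A/\kk} \to B'$ of degree $d < 0$ is determined by its values on the minimal generators of $I$, whose total degrees lie in $\{2, n_1, n_2\}$; using the standing assumption $n_1, n_2 \geq 3$ we get $|\psi(g;)| = |g|+d \leq \max(n_1,n_2) - 1 < n_1+n_2-2$, so each $\psi(g;)$ lifts uniquely to an element $\widetilde{\psi}(g;) \in A$. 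The syzygy generators of $L_2^{A/\kk}$ supplied by Lemma \ref{lem:L2gens} have degrees bounded by $\max(n_1,n_2) + 1 \leq n_1+n_2-2$ (again using $n_1, n_2 \geq 3$), so combined with $\psi \circ d_2^L = 0$ and the isomorphism $A \cong B'$ in these degrees, this forces $\widetilde{\psi} \circ d_2^L = 0$ in $A$. Hence $\widetilde{\psi}$ represents a class in $T^1(A/\kk, A)_{<0}$, which vanishes by Proposition \ref{prop:Negative-tangents-for-I}; thus $\widetilde{\psi}$ is a coboundary, and composing with $\pi$ shows that so is $\psi$.

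The main point requiring care is handling possible $\kk$-linear relations among the $\bar{s}_j$ in $\Soc A$ when computing $T^1(B'/A, B')$, since these enrich $Q_{B'/A}/\Kos_{B'/A}$ compared to the $r=1$ case of Proposition \ref{prop:tnt2}. Fortunately, they are irrelevant for the desired vanishing, because the cocycle condition already forces each $b_j$ into $\Soc B' = B'_{n_1+n_2-2}$, which is concentrated in a single total degree. The degree-lifting step is then essentially the same as in Lemma \ref{lem:LES-T1}, using the sharper syzygy bound afforded by $n_1, n_2 \geq 3$.
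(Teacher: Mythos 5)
Your proof is correct, and it takes a genuinely different route from the paper's. The paper argues by induction on $r$, applying the long exact sequence to $\kk \to A^{(r-1)} \to B'$ at each step. This requires Lemma \ref{l:socle-r-condition->socle-i-condition} to propagate the socle hypothesis down to all intermediate quotients, and it creates a complication at $j = -1$: the minimal free resolution of $A^{(r-1)}$ acquires extra second-syzygy generators $(s_i;x),\dotsc,(s_i;w)$ of total degree $n_1+n_2-1$, so the degree of $\widetilde{\psi}'\circ d_2^{L'}$ on those generators lands exactly at $n_1+n_2-2$, where $\pi'$ is \emph{not} injective. The paper therefore has to handle $\bideg(\psi') \in \{(-1,0),(0,-1)\}$ by a separate direct computation with the syzygy relations, showing $\varphi'$ agrees with a derivative map generator by generator.

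Your approach sidesteps all of this by comparing $A$ and $B'$ in a single step. Since you use the resolution $\cF_\bullet$ of $A$ itself (not $A^{(r-1)}$), the second syzygy generators from Lemma \ref{lem:L2gens} all have degree at most $\max(n_1,n_2)+1$, which for $n_1,n_2\ge3$ is strictly less than $n_1+n_2-1$; consequently $|G|+j < n_1+n_2-2$ for \emph{every} syzygy generator $G$ and \emph{every} $j<0$, and the lift $\widetilde{\psi}\circ d_2^L$ is forced to vanish uniformly, with no special case at $j=-1$. The price you pay is that $T^1(B'/A,B')$ is built from the larger kernel $K = \llrr{\bar{s}_1,\dotsc,\bar{s}_r}$ rather than a single socle element, so $Q_{B'/A}/\Kos_{B'/A}$ is more complicated; but as you observe, the cocycle condition still forces each component $b_j$ into $\Soc B' = B'_{n_1+n_2-2}$ via $\fmm_A F_{B'/A}\subseteq Q_{B'/A}$, and since $\Soc B'$ is concentrated in that single degree the vanishing is immediate in degrees $<0$, irrespective of any extra $\kk$-linear relations among the $\bar{s}_j$. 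Net effect: you avoid the induction, avoid Lemma \ref{l:socle-r-condition->socle-i-condition} entirely, and avoid the paper's $j=-1$ derivative argument, at the modest cost of a slightly richer (but harmless) $T^1(B'/A,B')$ computation. This is a cleaner argument.
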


\begin{proof}
  We prove the result by induction on $r$. Proposition \ref{prop:tnt2}
  handles the case $r = 1$, so let $r > 1$. Let $I^{(0)} := I$,
  $I^{(i)} := I + \llrr{s_1, s_2, \dotsc, s_i}$, and
  $A^{(i)}=S/I^{(i)}$ for $1 \le i \le r$.  We may further suppose
  that $s_{i+1} + I^{(i)} \in \Soc A^{(i)}$ is nonzero. Note that by
  Lemma \ref{l:socle-r-condition->socle-i-condition}, $\Soc A^{(i)} =
  A^{(i)}_{(n_1-1, n_2-1)}$ for all $1 \leq i \leq r$.
 
  Set $I' = I^{(r-1)}$, $A' = S/I'$, and $\pi' \colon A' \to B'$.
  %% We apply the long exact sequence of the pair of ring maps $\kk \to
  %% A^{(r-1)} \to A^{(r)}$, studying the portion
  %% \[
  %% \dotsb \longrightarrow T^1(A^{(r)}/A^{(r-1)},A^{(r)}) \longrightarrow
  %% T^1(A^{(r)}/\kk,A^{(r)}) \longrightarrow T^1(A^{(r-1)}/\kk,A^{(r)})
  %% \longrightarrow \dotsb.
  %% \]
  We use the long exact sequence of the pair of ring maps $\kk \to A'
  \to B'$, studying the portion
  \[
  \dotsb \longrightarrow T^1(B'/A',B') \longrightarrow T^1(B'/\kk,B')
  \longrightarrow T^1(A'/\kk,B') \longrightarrow \dotsb.
  \]
  To understand $T^1(A'/\kk,B')$, we use the truncated cotangent
  complex $L'_{\bullet}$ of $\kk \to A'$.  Let $\cF'_{\bullet}$ be the
  %% minimal free resolution of $A'$; we have $F' := \cF'_{1} = \cF_1
  %% \oplus \bigoplus_{i = 1}^{r-1} S(s_i;)$, where $(s_i;) \mapsto s_i$
  %% and where $\cF_{\bullet}$ is the minimal free resolution of $A$.
  %% Like in the proof of Lemma \ref{lem:LES-T1}, we wish to lift a class
  %% $[\psi'] \in T^1(A'/\kk,B')_j$ to $[\widetilde{\psi'}] \in
  %% T^1(A'/\kk,A')_j$ when $j < 0$.  We do have $|\psi'(s_i;)| =
  minimal free resolution of $A'$; we have
  \[
  F' := \cF'_{1} = \cF_1\oplus \bigoplus_{i = 1}^{r-1} S(s_i;),
  \]
 where $(s_i;) \mapsto s_i$ and where $\cF_{\bullet}$ is the minimal
 free resolution of $A$. Let $[\psi'] \in T^1(A'/\kk,B')_j$. We
 further assume that $\psi'$ is bigraded.
 
 First assume that $j<-1$.  Like in the proof of Lemma
 \ref{lem:LES-T1}, we wish to lift a class $[\psi'] \in
 T^1(A'/\kk,B')_j$ to $[\widetilde{\psi'}] \in T^1(A'/\kk,A')_j$.  We
 have $|\psi'(s_i;)| = n_1+n_2-2 +j < n_1+n_2-2$, so we can define a
 map $\widetilde{\psi'} \colon F'/I'F' \to A'$ via
 $\widetilde{\psi'}(g;) := \psi'(g;)$, for our generators $g \in I'$.
 For this to define an element $[\widetilde{\psi'}]$ in cohomology, we
 need to show
 $\widetilde{\psi'} \circ d_2^{L'} = 0$.
 %% Once we know this, we can conclude that $[\widetilde{\psi'}] = 0$
 %% by induction, which means that $\widetilde{\psi'}$ is a trivial
 %% negative tangent, and hence so is $\psi'$.
 Observe that $\cF'_2$
 has the form
 \[
 \cF'_2 = \cF_2 \oplus \bigoplus_{i=1}^{r-1} S(s_i;x) \oplus S(s_i;y)
 \oplus S(s_i;z) \oplus S(s_i;w),
 \]
 where $(s_i;x)$ maps to a choice of minimal syzygy arising from the
 fact that $xs_i \in I^{(i-1)}$, and similarly for $(s_i;y)$,
 $(s_i;z)$, and $(s_i;w)$ (recall that $0 \neq s_i +I^{(i-1)} \in \Soc
 A^{(i-1)}$). All generators of the form $(s_i;x)$ or $(s_i;y)$ have
 bidegree $(n_1, n_2-1)$, while the generators of the form $(s_i;z)$
 or $(s_i;w)$ have bidegree $(n_1-1, n_2)$.  Since $j<-1$, we see
 $\bideg(\psi') \notin \{ (-1,0), (0,-1) \}$.  Then identifying
 (bi)graded pieces of $B$ and $A$---like in the proof of Lemma
 \ref{lem:LES-T1}---shows that $\widetilde{\psi'} \circ d_2^{L'} = 0$,
 and therefore that $[\psi']=0$.

 It remains to show that $[\psi']$ is trivial when $\bideg(\psi') \in
 \{(-1,0), (0,-1)\}$.  We show this directly rather than lifting to
 $\widetilde{\psi'}$.  Suppose $\bideg(\psi') = (-1,0)$.  Note that
 $\psi' \colon F'/I'F' \to B'$ is an $A'$-linear map satisfying $\psi'
 \circ d_2^{L'} = 0$ and so factors through $I'/I'^2$; for simplicity,
 we work with the corresponding $S$-linear map $\varphi' \colon I' \to
 B'$.  Considering $\bideg(\psi')$, the generators of $I \subset I'$
 have values
 \[
 \varphi'(x^{n_1-k}y^k) = \sum_{0\le i<n_1}
 y^ix^{n_1-1-i}a^{(k)}_{i,0}, \quad \varphi'(z^{n_2-\ell}w^{\ell}) =
 0, \quad \text{ and } \quad \varphi'(q) = az - a'w,
 \]
 where $a,a',a^{(k)}_{i,0} \in \kk$, for all $0 \le k \le n_1$.
 Observe that relation \eqref{syz:qxy} holds for $\varphi'$ and takes
 place in $B'_{(n_1-1,1)} = A'_{(n_1-1,1)} = A_{(n_1-1,1)}$.  This
 implies that the proof of Proposition \ref{prop:tnt1} applies
 verbatim to these values of $\varphi'$; in other words, $\varphi'$
 acts as a derivative map on the generators of $I$, and so
 $\varphi'|_{I} = \delta|_{I}$, where $\delta := \pi' \circ
 (a\partial_x + a'\partial_y)$ and $a\partial_x + a'\partial_y \colon
 I' \to A'$.
 %% , for some $a, a' \in \kk$.  acts on the generators $x^{n_1},
 %% x^{n_1-1}y, \dotsc, y^{n_1}$ and $q$ as the map $a\partial_x +
 %% a'\partial_y$, for some $a, a' \in \kk$.
 As $xs_1, ys_1 \in I$, we have $x\varphi'(s_1) = x\delta(s_1)$ and
 $y\varphi'(s_1) = y\delta(s_1)$, and we see that $\varphi'(s_1) -
 \delta(s_1) \in (\Soc B')_{(n_1-2, n_2-1)} = 0$, which means
 $\varphi'(s_1) = \delta(s_1)$.  Thus, $\varphi'|_{I^{(1)}} =
 \delta|_{I^{(1)}}$ holds.  Repeating this for $xs_2, ys_2 \in
 I^{(1)}$ now shows that $\varphi'|_{I^{(2)}} = \delta|_{I^{(2)}}$
 holds, and continuing we eventually obtain $\varphi' = \delta$.  In
 other words, $\psi'$ is a trivial negative tangent vector,
 %% This gives $[\psi'] = 0$, for any such $\psi'$,
 and a symmetric argument applies to the case $\bideg(\psi') =
 (0,-1)$.  Therefore, we have $T^1(A'/\kk,B')_{<0} = 0$.

 The argument to show $T^1(B'/A',B')_{<0} = 0$ mirrors the proof of
 Proposition \ref{prop:tnt2}, as $\pi' \colon A' \to B'$ is a
 surjection and we are assuming that $\Soc B' = B'_{(n_1-1, n_2-1)}$.
 Hence, the long exact sequence proves that $T^1(B'/\kk,B')_{<0} = 0$.
 %% However, $\cF'_2$ has generators of degree $n_1+n_2-1$---for
 %% each $i' \le i$, $xs_{i'} \in I^{(i'-1)}$
\end{proof}
% --------------------------------------------------------------------

% --------------------------------------------------------------------
% Section -- 
% --------------------------------------------------------------------
\section{Vanishing Nonnegative Obstruction Spaces, II}
\label{sec:obs2}

Continuing with the notation from \S\ref{sec:tnt2},
%% let $I := \llrr{x,y}^{n_1} + \llrr{z,w}^{n_2} + \llrr{q}$, where $q
%% := xz - yw$ and $n_1, n_2 \ge 3$, let $J := I + \llrr{s}$, where $s
%% \in S_{(n_1-1, n_2-1)} \setminus I$, and set $A := S/I$, $B :=
%% S/J$.  %% to get a short exact sequence \[ 0 \longrightarrow J/I %%
%% \longrightarrow A \stackrel{\pi}{\longrightarrow} B \longrightarrow
%% %% 0.  \] T
our goal here is to prove that $T^2(B/\kk,B)_{\ge 0} = 0$.  As before,
the pair of natural ring maps $\kk \to A \to B$ yields a long exact
sequence, which terminates as follows:
\begin{equation}
\label{eqn:T2les}
\dotsb \longrightarrow T^2(B/A,B) \longrightarrow T^2(B/\kk,B)
\longrightarrow T^2(A/\kk,B).
\end{equation}
Let us first examine $T^2(B/A,B)$.  The truncated cotangent complex of
$\pi \colon A \to B$ is described in the proof of Lemma
\ref{lem:LES-T1}; it is
\[
L_{B/A,\bullet} \colon 0 \longleftarrow B(-n_1-n_2+2) \longleftarrow
\fmm_A(-n_1-n_2+2),
\]
where the differential is a twist of $\fmm_A \subset A \to B$.  By
definition, this implies $T^2(B/A,B)$ is a quotient of
$\Hom_{B}(\fmm_A(-n_1-n_2+2), B)$, the latter being trivial in
nonnegative degrees---i.e.\ $\fmm_A(-n_1-n_2+2)$ is generated in
degree $n_1+n_2-1$ while $B_i = 0$, for all $ i > n_1+n_2-2$.  This
shows that
\begin{equation}\label{eqn:T2BoverAgeq0}
T^2(B/A,B)_{\ge 0} = 0.
\end{equation}

Recall that the truncated cotangent complex
%% $L_{\bullet} := L_{\bullet}^{A/\kk}$
of $\kk \to A$
%% is described in \S\ref{sec:obs1} and equals
is described in \eqref{cotcplxAk} and equals
\[
L_{\bullet} \colon A(-1)^4 \longleftarrow \cF_{1} \otimes_S A
\longleftarrow \cF_{2}/\Kos'.
\]
%% where $\cF_{\bullet}$ is a minimal free resolution of $A$ over $S$ and
%% $Q/Kos$ is described by Lemma \ref{lem:L2gens}.
%% Observe that $\cF_{\bullet}$ inherits the bigrading; %% of $A$;
%% moreover, $L_{\bullet}$ is bigraded, as is seen from the generators
%% and the definition of the differential.
We show the following.

\begin{proposition}
  \label{prop:obs2}
  Let $B := S/J$, where $J$ is as in \S\ref{sec:tnt2}.  We have
  $T^2(B/\kk,B)_{\ge 0} = 0$.
\end{proposition}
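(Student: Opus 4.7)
The plan is to combine two long exact sequences in tangent cohomology to transfer the vanishing $T^2(A/\kk, A)_{\ge 0} = 0$ from Proposition \ref{prop:obs1} to $T^2(B/\kk, B)_{\ge 0}$. The long exact sequence \eqref{eqn:T2les} from the pair $\kk \to A \to B$, combined with the already-established $T^2(B/A, B)_{\ge 0} = 0$, immediately reduces the proposition to proving
\[
T^2(A/\kk, B)_{\ge 0} = 0.
\]

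To do so, I would apply $T^\bullet(A/\kk, -)$ to the short exact sequence of $A$-modules
\[
0 \longrightarrow J/I \longrightarrow A \longrightarrow B \longrightarrow 0,
\]
where $J/I$ is a one-dimensional $\kk$-vector space concentrated in bidegree $(n_1-1, n_2-1)$, since $s + I \in \Soc A = A_{(n_1-1, n_2-1)}$ by Lemma \ref{lem:socA}. The Lichtenbaum--Schlessinger long exact sequence (\cite[Theorem 3.4]{Hartshorne--2010}, in the graded form of Remark \ref{rmk:graded}) then produces a surjection $T^2(A/\kk, A) \twoheadrightarrow T^2(A/\kk, B)$, since $T^i(A/\kk, -) = 0$ for $i > 2$ for the truncated cotangent complex. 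Restricting to the degree-$\ge 0$ part and invoking Proposition \ref{prop:obs1} yields the desired vanishing.

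The principal technical obstacle I anticipate is justifying surjectivity of the last map in this long exact sequence. The cotangent modules $L_0$ and $L_1$ are free over $A$, but $L_2 = Q/\Kos$ is not obviously projective, so dualizing the SES need not remain short exact at $L_2$. If this subtlety cannot be resolved purely formally within the LS framework, a robust fallback is to redo the explicit syzygy-chasing argument of \S\ref{sec:obs1} verbatim with the target module replaced by $B$: the monomial basis of $B$ differs from $\cB$ only by deletion of the single top-bidegree socle element $s + I$, and Lemma \ref{lem:T2helper} ensures that the values of a degree-$\ge 0$ map $\psi\colon L_2 \to B$ are forced into $\fmm_A \cdot A$, which is disjoint from the deleted direction. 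Thus the telescoping solutions of \eqref{ddagger} and \eqref{dagger} carry over unchanged, producing the required lift $\psi'\colon L_1 \to B$, and either approach then completes the proof.
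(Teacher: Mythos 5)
Your opening is aligned with the paper: the long exact sequence \eqref{eqn:T2les} together with \eqref{eqn:T2BoverAgeq0} reduces the problem to the connecting term $T^2(A/\kk,B)$, and you correctly observe that because $L_2=Q/\Kos$ is not projective, the nine-term sequence of \cite[Theorem 3.4]{Hartshorne--2010} applied to $0\to J/I\to A\to B\to 0$ does not by itself yield a surjection $T^2(A/\kk,A)\twoheadrightarrow T^2(A/\kk,B)$. This is the right worry, and it is not a formality: the paper never establishes $T^2(A/\kk,B)_{\ge 0}=0$. Instead it lifts a bihomogeneous $\psi\colon L_2\to B$ of nonnegative bidegree $(j_1,j_2)$ to $\widetilde\psi\colon\cF_2\to A$ and asks whether $\widetilde\psi(\Kos')=0$ (equivalently, whether $\psi$ lifts to a class in $T^2(A/\kk,A)_{\ge 0}=0$); a bidegree count over the Koszul generators shows this succeeds, so $[\psi]=0$, except possibly when $(j_1,j_2)\in\{(-n_1-1,n_2-1),\ (-2,n_2-2),\ (n_1-2,-2)\}$. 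For those exceptional bidegrees the paper abandons $T^2(A/\kk,B)$ altogether and invokes a \emph{second} long exact sequence, coming from the factorization $A\to\kk\to B$, whose flanking terms $T^1(\kk/A,B)_{(j_1,j_2)}$ and $T^2(B/A,B)_{(j_1,j_2)}$ both vanish. This second long exact sequence is the key idea absent from your proposal.

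Your fallback---re-running the syzygy chase of \S\ref{sec:obs1} with $B$-coefficients ``verbatim''---does not close the gap. The final step of the proof of Proposition \ref{prop:obs1} uses that multiplication-by-$x$ is injective on $A_{(i_1,i_2)}$ for $i_1<n_1-1$; the corresponding map on $B$ may have a kernel at bidegree $(n_1-2,n_2-1)$ since $s+I$ is killed in the quotient (cf.\ the proof of Lemma \ref{lem:socB}), and the same problem recurs under the $x,y\leftrightarrow z,w$ symmetry. Moreover, the whole apparatus of Lemma \ref{l:decomposition-of-polynomials} and Corollary \ref{cor:mindingPsAndQsInGeneral} is tailored to the monomial basis $\cB$ of $A$; in $B$ a general $s$ need not be a monomial, so the decomposition and support arguments do not transfer cleanly. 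These are precisely the reasons the paper cannot give a uniform proof that $T^2(A/\kk,B)_{(j_1,j_2)}=0$ in all nonnegative bidegrees and must handle the exceptional ones by the second long exact sequence.
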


\begin{proof} 
  %% To begin, observe that $F^{\bullet}$ inherits the bigrading of
  %% $A$; moreover, $L_{\bullet}^{A/\kk}$ is bigraded, as is seen from
  %% the generators and the definition of the differential.  Now, s
  We must examine $T^2(A/\kk,B)$.  Suppose we are given an $A$-linear
  homomorphism $\psi \colon \cF_{2}/\Kos' \to B$ of nonnegative
  degree; decomposing $\psi$, we may assume that $\psi$ has
  $\bideg(\psi) = (j_1,j_2) \in \ZZ^2$ such that $j_1+j_2 \ge 0$. We
  may think of $\psi$ as an $S$-linear map $\cF_{2} \to B$ vanishing
  on $\Kos'$.  As $\cF_{2}$ is free, there is a bigraded lifting
  $\widetilde{\psi} \colon \cF_{2} \to A$ such that $\pi \circ
  \widetilde\psi = \psi$.  If $\widetilde\psi|_{\Kos'} = 0$, then
  $\widetilde\psi$ defines an element of $T^2(A/\kk,A)_{\ge 0} = 0$,
  so $\widetilde\psi$ factors through $d_2^{L}$; then $\psi$ also
  factors through $d_2^L$, showing that $\psi$ is trivial in
  $T^2(A/\kk,B)$.
  %% To see that $\widetilde\psi|_{\Kos'} = 0$, l

  So it remains to show that $\widetilde\psi|_{\Kos'} = 0$. Let $G \in
  \Kos'$ be a minimal generator and set $(i_1,i_2) := \bideg(G)$ so
  that
  \[
  (i_1,i_2) \in \{ (2n_1,0), (0,2n_2), (n_1,n_2), (n_1+1,1), (1,n_2+1)
  \}.
  \]
  That is, the Koszul relation $x^{n_1-i}y^i (x^{n_1-i'}y^{i'};) -
  x^{n_1-i'}y^{i'} (x^{n_1-i}y^{i};)$ has bidegree $(2n_1,0)$; the
  Koszul relation $x^{n_1-i}y^i (q;) - q (x^{n_1-i}y^{i};)$ has
  bidegree $(n_1+1,1)$; etc.  Without loss of generality, assume $n_1
  \le n_2$. Since $\pi \circ \widetilde\psi = \psi$, we have
  \[
  \widetilde\psi(G) \in J/I \subset A_{(n_1-1,n_2-1)}.
  \]
  As $\widetilde\psi(G) \in A_{(i_1+j_1,i_2+j_2)}$ and $j_1+j_2 \ge
  0$, we immediately find that
  % $\widetilde\psi(G) \in A_{(i_1+j_1,i_2+j_2)} = 0$, if $(i_1,i_2)
  % \in \{ (0,2n_2), (n_1,n_2) \}$.
  \[
  (j_1,j_2) \notin \{ (-n_1-1,n_2-1), (-2,n_2-2),
  (n_1-2,-2)\}\quad\Longrightarrow\quad \widetilde\psi(G)=0.
  \]
  In particular, for such $(j_1,j_2)$, we have
  $T^2(A/\kk,B)_{(j_1,j_2)} = 0$; combining this with
  \eqref{eqn:T2les} and \eqref{eqn:T2BoverAgeq0}, we find
  $T^2(B/\kk,B)_{(j_1,j_2)} = 0$.

  To complete the proof, we must show that $T^2(B/\kk,B)_{(j_1,j_2)} =
  0$ for $(j_1,j_2)$ belonging to $\{ (-n_1-1,n_2-1), (-2,n_2-2),
  (n_1-2,-2)\}$. Consider the long exact sequence induced by the ring
  maps $A \to A/\fmm_A=\kk \to B$, which contains the following
  portion:
  \[
  \dotsb \longrightarrow T^1(\kk/A,B) \longrightarrow T^2(B/\kk,B)
  \longrightarrow T^2(B/A,B) \longrightarrow \dotsb.
  \]
  Because $A \to \kk$ is surjective,
  %% with kernel $\fmm_A$,
  we have $T^1(\kk/A, B)_{(j_1,j_2)} = \Hom_{\kk}(\fmm_A/\fmm_A^2,
  B)_{(j_1,j_2)}=0$ since $j_1$ or $j_2$ is at most $-2$;
  %%  and the bidegree $(j_1,j_2)$ part of the latter equals $0$
  cf.\ \cite[Proposition 3.8]{Hartshorne--2010}.  Since
  \[
  T^2(B/A,B)_{(j_1,j_2)} = 0
  \]
  holds by \eqref{eqn:T2BoverAgeq0}, restricting this long exact
  sequence to the bidegree $(j_1,j_2)$ part shows that
  $T^2(B/\kk,B)_{(j_1,j_2)} = 0$.
  Hence, we have $T^2(B/\kk,B)_{\ge 0} = 0$.
\end{proof}

Again, the argument iterates.

\begin{corollary}
  \label{cor:obs2}
  %%   Let $s_1, s_2, \dotsc, s_{r} \in S$, $I^{(0)} := I$, and $I^{(i)} :=
  %%   I + \llrr{s_1, s_2, \dotsc, s_i}$, for $1 \le i \le r$.  Suppose
  %%   that, for all $0 \le i \le r-1$,
  %%   \begin{itemize}
  %%   \item the ideal $I^{(i)}$ has vanishing nonnegative obstruction
  %%     space,
  %%   \item the element $s_{i+1} + I^{(i)} \in \Soc A^{(i)}$ is nonzero,
  %%     where $A^{(i)} := S/I^{(i)}$, and
  %%   \item the socle satisfies $\Soc A^{(i+1)} = A^{(i+1)}_{(n_1-1,
  %%     n_2-1)}$.
  %%   \end{itemize}
  %%   Then $I^{(r)}$ has vanishing nonnegative obstruction space.
  %% \end{corollary}

  %% \begin{proof}
  %%  Proposition \ref{prop:obs2} handles the case $r = 1$, so take $r >
  %%  1$.  Set $I' = I^{(r-1)}$, $J' = I^{(r)}$, $A' = S/I'$, $B' = S/J'$,
  %%  and $\pi' \colon A' \to B'$.
  Let $s_1, s_2, \dotsc, s_{r} \in S$,
  \[
  J' = I + \llrr{s_1, s_2, \dotsc, s_{r}}
  \]
  and $B' = S/J'$. Assume every $s_i+I\in \Soc A$ and $\Soc B' =
  B'_{(n_1-1,n_2-1)}$. Then $J'$ has vanishing nonnegative obstruction
  space.
\end{corollary}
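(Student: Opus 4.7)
The plan is to induct on $r$, with the base case $r=1$ being exactly Proposition \ref{prop:obs2}. For the inductive step, adopt the notation of Corollary \ref{cor:tnt2}: set $I^{(i)} = I + \llrr{s_1, \dotsc, s_i}$, $A^{(i)} = S/I^{(i)}$, $A' = A^{(r-1)}$, and $\pi' \colon A' \to B' = A^{(r)}$. By Lemma \ref{l:socle-r-condition->socle-i-condition}, $\Soc A^{(i)} = A^{(i)}_{(n_1-1,n_2-1)}$ for every $i \le r$, so the inductive hypothesis gives $T^2(A'/\kk, A')_{\ge 0} = 0$, and $B' = A'/\llrr{\bar s_r}$ where $\bar s_r \in \Soc A'$ has bidegree $(n_1-1, n_2-1)$. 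The goal is to upgrade this to $T^2(B'/\kk, B')_{\ge 0} = 0$ by repeating the two-step strategy of Proposition \ref{prop:obs2} with $A'$ in place of $A$.

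First, I would consider the long exact sequence for $\kk \to A' \to B'$, focusing on the portion $T^2(B'/A', B') \to T^2(B'/\kk, B') \to T^2(A'/\kk, B')$. The truncated cotangent complex of $\pi'$ takes the form $0 \leftarrow B'(-n_1-n_2+2) \leftarrow \fmm_{A'}(-n_1-n_2+2)$ exactly as in the proof of Lemma \ref{lem:LES-T1}, so the same degree argument leading to \eqref{eqn:T2BoverAgeq0} gives $T^2(B'/A', B')_{\ge 0} = 0$. The remaining task is to control $T^2(A'/\kk, B')$ in each nonnegative bidegree.

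For a bihomogeneous homomorphism $\psi \colon \cF'_2/\Kos' \to B'$ of bidegree $(j_1, j_2)$ with $j_1+j_2 \ge 0$, lift it to $\widetilde\psi \colon \cF'_2 \to A'$. If $\widetilde\psi|_{\Kos'} = 0$, then $\widetilde\psi$ represents a class in $T^2(A'/\kk, A')_{\ge 0} = 0$ by induction, so $\psi$ is trivial. For a minimal Koszul generator $G$, the image $\widetilde\psi(G)$ lies in $\ker \pi' = \kk \cdot \bar s_r \subset A'_{(n_1-1, n_2-1)}$, so the only way it can be nonzero is when $\bideg(G) + (j_1, j_2) = (n_1-1, n_2-1)$. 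The minimal Koszul generators coming from pairs in $I$ contribute the same five bidegrees $\{(2n_1,0), (0,2n_2), (n_1,n_2), (n_1+1,1), (1,n_2+1)\}$ as before, while the additional Koszul relations involving the $s_i$ (paired with generators of $I^{(i-1)}$) have bidegrees $(2n_1-1, n_2-1)$, $(n_1-1, 2n_2-1)$, $(n_1, n_2)$, and $(2n_1-2, 2n_2-2)$. For each of these four new bidegrees, the required shift $(j_1, j_2)$ forces $j_1+j_2 < 0$ (using $n_1, n_2 \ge 3$), so they contribute no new problematic bidegrees, and the only possibly nonzero bidegrees of $\widetilde\psi|_{\Kos'}$ remain $\{(-n_1-1, n_2-1), (-2, n_2-2), (n_1-2, -2)\}$.

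Finally, for these three exceptional bidegrees I would argue exactly as in the closing paragraph of Proposition \ref{prop:obs2}, via the long exact sequence associated to $A' \to \kk \to B'$: since in each case one of $j_1, j_2$ is at most $-2$, we have $T^1(\kk/A', B')_{(j_1,j_2)} = \Hom_\kk(\fmm_{A'}/\fmm_{A'}^2, B')_{(j_1,j_2)} = 0$, and combined with the already-established $T^2(B'/A', B')_{(j_1,j_2)} = 0$ this yields $T^2(B'/\kk, B')_{(j_1,j_2)} = 0$. The main obstacle is simply the bidegree bookkeeping for the enlarged $\cF'_2$; this is the one place where the extra socle-element generators $s_1, \dotsc, s_{r-1}$ could in principle interfere, and once one verifies they do not, the rest of the argument is a transcription of the $r=1$ case to $A'$.
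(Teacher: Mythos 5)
Your proof is correct and follows the paper's approach: the same induction on $r$, the same two long exact sequences for $\kk \to A' \to B'$ and $A' \to \kk \to B'$, the same lift $\widetilde\psi\colon\cF'_2 \to A'$ with $\widetilde\psi(G) \in \ker\pi'$ forcing bidegree $(n_1-1,n_2-1)$, and the same three exceptional bidegrees handled at the end. The one step you make explicit, and the paper leaves implicit, is the tally of the new Koszul bidegrees $(2n_1-1,n_2-1)$, $(n_1-1,2n_2-1)$, $(n_1,n_2)$, $(2n_1-2,2n_2-2)$ and the check that each would force $j_1+j_2<0$; the paper instead notes that the new generators of $\cF'_2$ all have degree $n_1+n_2-1$ (hence are killed by any $\psi'$ of nonnegative degree) and then asserts that the bidegree analysis from the $r=1$ case carries over.
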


\begin{proof}
  We prove the result by induction on $r$.  Proposition
  \ref{prop:obs2} handles the case $r = 1$, so we take $r > 1$. Let
  $I^{(0)} := I$, $I^{(i)} := I + \llrr{s_1, s_2, \dotsc, s_i}$, and
  $A^{(i)}=S/I^{(i)}$ for $1 \le i \le r$.  We may further suppose
  that $s_{i+1} + I^{(i)} \in \Soc A^{(i)}$ is nonzero. Note that by
  Lemma \ref{l:socle-r-condition->socle-i-condition}, $\Soc A^{(i)} =
  A^{(i)}_{(n_1-1, n_2-1)}$ for all $1 \leq i \leq r$.
 
  Set $I' = I^{(r-1)}$, $A' = S/I'$, and $\pi' \colon A' \to B'$. 
  %% We apply the long exact sequence of the pair of ring maps $\kk \to
  %% A^{(r-1)} \to A^{(r)}$, studying the portion
  %% \[
  %% \dotsb \longrightarrow T^1(A^{(r)}/A^{(r-1)},A^{(r)}) \longrightarrow
  %% T^1(A^{(r)}/\kk,A^{(r)}) \longrightarrow T^1(A^{(r-1)}/\kk,A^{(r)})
  %% \longrightarrow \dotsb.
  %% \]
  We use the long exact sequence of the pair of ring maps $\kk \to A'
  \to B'$, studying the portion
  \[
  \dotsb \longrightarrow T^2(B'/A',B') \longrightarrow T^2(B'/\kk,B')
  \longrightarrow T^2(A'/\kk,B').
  \]
  Our assumptions guarantee that the proof of the equality
  $T^2(B'/A',B')_{\ge 0} = 0$ follows exactly as
  %% for Proposition \ref{prop:obs2}
  in the case $r = 1$.

  As mentioned in the proof of Corollary \ref{cor:tnt2}, the minimal
  free resolution of $A'$ over $S$ has terms
  \[
  \cF'_{\bullet} \colon S \longleftarrow \cF_1 \oplus \bigoplus_{i =
    1}^{r-1} S(s_i;) \longleftarrow \cF_2 \oplus \bigoplus_{i=1}^{r-1}
  \bigl( S(s_i;x) \oplus S(s_i;y) \oplus S(s_i;z) \oplus S(s_i;w)
  \bigr) \longleftarrow \dotsb.
  \]
  We wish to apply the proof of Proposition \ref{prop:obs2} to an
  $A'$-linear map $\psi' \colon \cF'_2 / \Kos'' \to B'$.  Note that
  the generators of $\cF'_2$ not belonging to $\cF_2$ all have degree
  $n_1+n_2-1$.  This implies that any $\psi'$ of nonnegative degree
  must vanish on these generators and any syzygies involving them.
  The same analysis of bidegrees as in the proof of Proposition
  \ref{prop:obs2} then holds, showing that $T^2(A'/\kk,B')_{\ge 0} =
  0$.
  % \matt{the point is that $(s_i;v)$ where $v=x,y,z,w$ are all killed
  %   by $\psi'$ simply because they map to large degree. Similarly,
  %   our choice of lift $\widetilde\psi$ will kill $(s_i;v)$. Now
  %   $\Kos'$ lives in even higher degree so these are of course also
  %   killed by $\widetilde\psi$. Then for the rest of the generators,
  %   we're just dealing with the generators of $I$, i.e.~it's just
  %   what we did in the proof of the proposition.} \andrew{we lift
  %   like before to define $\widetilde{\psi'} \colon \cF_2' \to A'$,
  %   and want to show $\widetilde{\psi'}|_{\Kos''} = 0$, which tells
  %   us we've got an element of $T^2_{A,\ge 0} = 0$.  The
  %   ``original'' Koszul relations are still around, and now there
  %   are new ones involving the $(s_i;v)$'s; but the latter have high
  %   degree and so completely miss $A'$.}
  Hence, we have $T^2(B'/\kk,B')_{\ge 0} = 0$ by the long exact
  sequence.
\end{proof}
% --------------------------------------------------------------------

% --------------------------------------------------------------------
% Section -- 
% --------------------------------------------------------------------
\section{Dimensions of Components}
\label{sec:dimOfComps}

Let $I$ be as in Theorem \ref{thm:intro}.  Having now shown that $[I]$
is a smooth point of the Hilbert scheme and that the irreducible
component containing $[I]$ is elementary, we compute the dimension of
this component; see Corollary \ref{cor:verySmall}.  This is achieved
by explicitly computing the dimension of the tangent space
$\Hom_S(I,S/I)$.

Let $\varphi\in\Hom_S(I,S/I)$.  Our starting point is to re-examine
relation (\ref{syz:qxy}), namely
\[
x^{n_1-1-k}y^k \varphi(xz-yw) = z \varphi(x^{n_1-k}y^k) - w
\varphi(x^{n_1-1-k}y^{k+1}), 
\]
where $0 \le k \le n_1-1$.  

\begin{proposition}
  \label{prop:grid}
  %% Let $r := \varphi(q)$ and $p_k := \varphi(x^{n_1-k}y^k)$, for $0
  %% \le k < n_1$.  Relation (\ref{syz:qxy}) is equivalent to the
  %% equations
  Let $q:=xz-yw$, $r := \varphi(q)$, and $p_k :=
  \varphi(x^{n_1-k}y^k)$, for $0 \le k \le n_1$.  Let $p'_k \in
  \Ann_{x,y}$ be such that $p_k-p'_k$ is supported away from
  $\Ann_{x,y}$.  For any $f\in S/I$,
  %% let $f^z$ denote its $x^0z^{\geq0}$-part, and
  let $f_{i,j}$ be as in Lemma \ref{l:decomposition-of-polynomials}.
  Each $(p'_k)_{i,j}$ factors as $x^{n_1-1-i} (p'_k)_{i,j}^z$, where
  $(p'_k)_{i,j}^z$ is some polynomial in $z$.  Let $r_{0,j}^z$ denote
  the $x^0z^{\ge 0}$-part of $r_{0,j}$.
  
  Then relation (\ref{syz:qxy}) is equivalent to the equations
  \[ \tag{$\star^{z}$} \label{starz}
  \begin{cases}
    z(p_{k}')_{i,0}^z = (n_1-k)z^{k-i} r_{0,k-i}^z, &\text{ for } k
    \le i < n_1, \\
    z(p_{k+1}')_{i+1,0}^z = z^{k-i} \left( z (p_{0}')_{0,k-i}^z -
    (k+1)r_{0,k-i}^z \right), &\text{ for } 0 \le i < k, \\
    z^j(p_{k+1}')_{0,j-1}^z = z^{k+j} \left( z(p_0')_{0,k+j}^z -
    (k+1) r_{0,k+j}^z \right), &\text{ for } 0 < j < n_2.
  \end{cases}
  \]
  %  where
  %% $f_{i,j}^z$ denotes a polynomial in $z$.  Specifically,
  %  $r_{0,j}^z$ denotes the $x^0z^{\ge 0}$-part of $r_{0,j}$ and
  % $(p_0')_{0,j}$ factors as $(p_0')_{0,j} = x^{n_1-1}
  % (p_0')_{0,j}^z$.
\end{proposition}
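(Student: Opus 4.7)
The plan is to reduce relation (\ref{syz:qxy}) to an equation involving only the $\Ann_{x,y}$-parts $p_k'$, and then extract $(\star^z)$ by comparing coefficients in the basis $\cB$ and performing a telescoping. To carry out the reduction, I apply Corollary \ref{cor:mindingPsAndQsInGeneral} to the $p_k$'s---which satisfy $yp_k = xp_{k+1}$ by (\ref{syz:xy})---to express each as $p_k'$ plus a sum of terms built from polynomials $t_0, \dotsc, t_{n_1}$. A short direct computation using $yw = xz$ in $A$ shows that the $t_i$-contributions cancel in $zp_k - wp_{k+1}$, so (\ref{syz:qxy}) collapses to $x^{n_1-1-k}y^k r = zp_k' - wp_{k+1}'$ for every $k$.

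Next, I expand both sides in the basis $\cB$. On the left, the inclusion $\llrr{x,y}^{n_1}\subset I$ forces only the $x^0$-parts $r_{0,0}^z, r_{0,j}^z$ of $r$ to survive multiplication by $x^{n_1-1-k}y^k$, and the identity $y^k w^j = (xz)^{\min(k,j)} y^{\max(0,k-j)} w^{\max(0,j-k)}$ in $A$ rewrites each $y^kw^j r_{0,j}$ in canonical form. On the right, the factorization $(p_k')_{i,j} = x^{n_1-1-i}(p_k')_{i,j}^z$ together with the relation $y^i w = y^{i-1} xz$ (for $i > 0$) shifts the $y$-index in $wp_{k+1}'$. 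Equating coefficients of each basis monomial then produces four families of equations: (A) a diagonal family $z^{k-i} r_{0,k-i}^z = z[(p_k')_{i,0}^z - (p_{k+1}')_{i+1,0}^z]$ for $0 < i \le k$, (B) the $i=0$ analogue $z^k r_{0,k}^z = z[(p_k')_{0,0}^z - (p_{k+1}')_{1,0}^z]$, (C) a pure-shift family $z(p_k')_{i,0}^z = z(p_{k+1}')_{i+1,0}^z$ for $k < i < n_1$, and (D) a $w$-family $z^k r_{0,k+j}^z = z(p_k')_{0,j}^z - (p_{k+1}')_{0,j-1}^z$ for $0 < j < n_2$.

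Finally, $(\star^z)$ will follow by telescoping (A)--(D). Iterating (C) up to $i = n_1-1$, together with the boundary identity $z(p_{n_1-1}')_{n_1-1,0}^z = r_{0,0}^z$ (obtained from (A) at $k = i = n_1-1$ with $(p_{n_1}')_{n_1,0}^z := 0$), yields the first equation of $(\star^z)$ (with the convention that $r_{0,k-i}^z = 0$ when $k < i$). Iterating (D) with $k+j$ constant down to $k=0$ yields $(p_{k+1}')_{0,j-1}^z = z^{k+1}(p_0')_{0,k+j}^z - (k+1) z^k r_{0,k+j}^z$, which becomes the third equation of $(\star^z)$ after multiplying through by $z^j$. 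Telescoping (A) from $(k+1, i+1)$ diagonally down to $(k-i, 0)$ and substituting the just-derived expression for $(p_{k-i}')_{0,0}^z$ produces the second equation of $(\star^z)$. The converse implication, that each of (A)--(D) follows from $(\star^z)$, is routine by index-wise subtraction.

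The main obstacle is the bookkeeping in the expansion step: the relation $yw = xz$ creates cross-interactions between the $y$-filtered and $w$-filtered parts of the decompositions, and a number of boundary cases must be tracked individually (at $i \in \{0, k, n_1-1\}$, at $k \in \{0, n_1\}$, and in distinguishing $k \le j$ from $k > j$). Once (A)--(D) are isolated, the telescoping step is essentially mechanical.
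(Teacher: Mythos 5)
Your proposal follows essentially the same route as the paper's proof: first use Corollary~\ref{cor:mindingPsAndQsInGeneral} to cancel the non-$\Ann_{x,y}$ contributions and reduce relation~\eqref{syz:qxy} to $x^{n_1-1-k}y^k r = zp_k' - wp_{k+1}'$, then expand both sides via Lemma~\ref{l:decomposition-of-polynomials} using $yw=xz$, equate coefficients to obtain the pre-telescoped family (your (A)--(D), which is exactly the paper's two-condition system after splitting into index ranges), and finally telescope along constant $k-i$ and constant $k+j$ to arrive at $(\star^z)$. The only place where your description is slightly off is the first equation of $(\star^z)$ at $i=k$: it does not come from iterating (C) (which only applies for $k<i$) plus a single boundary identity, but from telescoping the diagonal relation $z(p_k')_{k,0}^z - z(p_{k+1}')_{k+1,0}^z = r_{0,0}^z$ all the way to $k'=n_1$, accumulating $r_{0,0}^z$ a total of $n_1-k$ times; this is a description issue rather than a gap, and the argument is otherwise correct.
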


Note that terms in (\ref{starz}) may vanish per Lemma
\ref{l:decomposition-of-polynomials}, e.g.~if $k<i$, then $r_{0,k-i}^z
= 0$.

\begin{proof}
  In this notation, (\ref{syz:qxy}) becomes
  \[
  x^{n_1-1-k}y^k r = z p_k - w p_{k+1},
  \]
  where $0\le k < n_1$.  Applying Lemma
  \ref{l:decomposition-of-polynomials} to the left-hand side, we have
  \begin{align*}
    x^{n_1-1-k}y^kr &= x^{n_1-1-k}y^k \left( \sum_{0<i<n_1} y^i
    r_{i,0} + \sum_{0<j<n_2} w^j r_{0,j} + r_{0,0}\right) \\ &=
    \sum_{0<j<n_2} x^{n_1-1-k}y^k w^j r_{0,j}^z +
    x^{n_1-1-k}y^kr_{0,0}^z,
  \end{align*}
  where $r_{0,j}^z$ denotes the $x^0z^{\ge 0}$-part of the polynomial
  $r_{0,j} = r_{0,j}(x,z)$, for $j \ge 0$.  This equals
  \begin{align*}
    &= \sum_{0<j\le k} y^{k-j} x^{n_1-1-k+j} z^j r_{0,j}^z +
    \sum_{k<j<n_2} w^{j-k} x^{n_1-1} z^k r_{0,j}^z + x^{n_1-1-k} y^k
    r_{0,0}^z \\ &= \sum_{0 \le i < k} y^i x^{n_1-1-i} z^{k-i}
    r_{0,k-i}^z + y^k x^{n_1-1-k} r_{0,0}^z + \sum_{0<j<n_2-k} w^j
    x^{n_1-1} z^k r_{0,k+j}^z \\ &= \sum_{0<i\le k} y^i x^{n_1-1-i}
    z^{k-i} r_{0,k-i}^z + \sum_{0<j<n_2-k} w^j x^{n_1-1} z^k
    r_{0,k+j}^z + x^{n_1-1} z^k r_{0,k}^z,
  \end{align*}
  which is the expression guaranteed by Lemma
  \ref{l:decomposition-of-polynomials} for the element $x^{n_1-1-k}y^k
  r \in A = S/I$.

  Now consider the right-hand side $zp_k - wp_{k+1}$.
  %% Let $p_0'' := p_0' + x^{n_1-1}ws_1$ and $p_k'' := p_k' +
  %% y^{k-1}x^{n_1-k}zs_1$, for $k > 0$, where all $p_k'$'s and $s_1$
  %% are as in Corollary \ref{cor:mindingPsAndQsInGeneral} .  So $p_k''$ is the part
  %% of $p_k$ annihilated by $x$ and $y$.
  A straightforward calculation with the expressions from
  Corollary \ref{cor:mindingPsAndQsInGeneral}  shows that $z p_k - w p_{k+1} = z
  p_k' - w p_{k+1}'$,
  %% = z p_k'' - w p_{k+1}''.
  where $p_k'$ is the part of $p_k$ annihilated by $x$ and $y$.  For
  any element $f = \sum_{i>0} y^i f_{i,0} + \sum_{j>0} w^j f_{0,j} +
  f_{0,0}$ expressed using Lemma \ref{l:decomposition-of-polynomials}
  and belonging to $\Ann_{x,y} = A_{(n_1-1,*)}$, we may assume
  $f_{i,j}$ has the form $x^{n_1-1-i} f_{i,j}^z$, where $f_{i,j}^z$ is
  a polynomial in $z$.  Thus, we have
  \begin{align*}
    zp_k' &= z\left( \sum_{0<i<n_1} y^i (p_k')_{i,0} + \sum_{0<j<n_2}
    w^j (p_k')_{0,j} + (p_k')_{0,0} \right) \\ &= z\left(
    \sum_{0<i<n_1} y^i x^{n_1-1-i} (p_k')_{i,0}^z + \sum_{0<j<n_2} w^j
    x^{n_1-1} (p_k')_{0,j}^z + x^{n_1-1} (p_k')_{0,0}^z \right) \\ &=
    \sum_{0<i<n_1} y^i x^{n_1-1-i} z (p_k')_{i,0}^z + \sum_{0<j<n_2-1}
    w^j x^{n_1-1} z (p_k')_{0,j}^z + x^{n_1-1} z (p_k')_{0,0}^z.
  \end{align*}
  We also have
  \begin{align*}
    wp_{k+1}' &= w\left( \sum_{0<i<n_1} y^i (p_{k+1}')_{i,0} +
    \sum_{0<j<n_2} w^j (p_{k+1}')_{0,j} + (p_{k+1}')_{0,0} \right)
    \\ &= w\left( \sum_{0<i<n_1} y^i x^{n_1-1-i} (p_{k+1}')_{i,0}^z +
    \sum_{0<j<n_2} w^j x^{n_1-1} (p_{k+1}')_{0,j}^z + x^{n_1-1}
    (p_{k+1}')_{0,0}^z \right) \\ &= \sum_{0<i<n_1} y^{i-1} x^{n_1-i}
    z (p_{k+1}')_{i,0}^z + \sum_{0<j<n_2-1} w^{j+1} x^{n_1-1}
    (p_{k+1}')_{0,j}^z + w x^{n_1-1} (p_{k+1}')_{0,0}^z \\ &=
    \sum_{0<i<n_1-1} y^{i} x^{n_1-1-i} z (p_{k+1}')_{i+1,0}^z +
    \sum_{0<j<n_2} w^{j} x^{n_1-1} (p_{k+1}')_{0,j-1}^z + x^{n_1-1} z
    (p_{k+1}')_{1,0}^z
    %% \\ &= \sum_{i>0} y^{i} x^{n_1-1-i} z (p_{k+1}')_{i+1,0}^z +
    %% \sum_{j>0} w^{j} x^{n_1-1} (p_{k+1}')_{0,j-1}^z + x^{n_1-1} z
    %% (p_{k+1}')_{1,0}^z
  \end{align*}
  and combining these gives
  \begin{align*}
    zp_k' - wp_{k+1}' &= \sum_{0<i<n_1} y^i x^{n_1-1-i} z \left(
    (p_k')_{i,0}^z - (p_{k+1}')_{i+1,0}^z \right) \\ &\qquad\qquad +
    \sum_{0<j<n_2} w^j x^{n_1-1} \left( z (p_k')_{0,j}^z -
    (p_{k+1}')_{0,j-1}^z \right) + x^{n_1-1} z \left( (p_k')_{0,0}^z -
    (p_{k+1}')_{1,0}^z \right) \!,
  \end{align*}
  where $i = n_1-1$ implies $(p_{k+1}')_{i+1,0}^z$ and $j = n_2-1$
  implies $w^j z = 0$.  Thus, (\ref{syz:qxy}) is equivalent to the
  conditions
  \[
  \begin{cases}
    z^{k-i} r_{0,k-i}^z = z (p_k')_{i,0}^z - z (p_{k+1}')_{i+1,0}^z,
    &\text{ for } 0 \le i < n_1 \\ w^j z^k r_{0,k+j}^z = w^j z
    (p_k')_{0,j}^z - w^j (p_{k+1}')_{0,j-1}^z, &\text{ for } 0 < j <
    n_2,
  \end{cases}
  \]
  where terms may vanish for certain values of their indices (as
  indicated in the preceding summation notation).  Moreover, these
  conditions are unchanged when $w$ is replaced by $z$.  When $i \ge
  k$, we rewrite the first condition as
  \begin{align*}
    z (p_{k}')_{i,0}^z &= z (p_{k+1}')_{i+1,0}^z + z^{k-i} r_{0,k-i}^z
    = z (p_{k+2}')_{i+2,0}^z + 2z^{k-i} r_{0,k-i}^z = \dotsb \\ &= z
    (p_{n_1}')_{i+n_1-k,0}^z + (n_1-k)z^{k-i} r_{0,k-i}^z =
    (n_1-k)z^{k-i} r_{0,k-i}^z,
  \end{align*}
  as $(p_{n_1}')_{n_1+i-k,0}^z = 0$ by definition.  When $0 \le i <
  k$, we rewrite the first condition differently as
  \begin{align*}
    z (p_{k+1}')_{i+1,0}^z &= z (p_{k}')_{i,0}^z - z^{k-i} r_{0,k-i}^z
    = z (p_{k-1}')_{i-1,0}^z - 2z^{k-i} r_{0,k-i}^z = \dotsb \\ &= z
    (p_{k-i}')_{0,0}^z - (i+1)z^{k-i} r_{0,k-i}^z \\ &= z^{k-i} \left(
    z (p_0')_{0,k-i}^z - (k-i)r_{0,k-i}^z \right) - (i+1)z^{k-i}
    r_{0,k-i}^z \\ &= z^{k-i} \left( z(p_0')_{0,k-i}^z -
    (k+1)r_{0,k-i}^z \right),
  \end{align*}
  assuming the third condition in the statement of the proposition
  holds.  To obtain the latter, we rewrite the second condition above
  as
  \begin{align*}
    z^j (p_{k+1}')_{0,j-1}^z &= z^{j+1} (p_k')_{0,j}^z - z^{k+j}
    r_{0,k+j}^z = z^{j+2} (p_{k-1}')_{0,j+1}^z - 2z^{k+j} r_{0,k+j}^z
    = \dotsb \\ &= z^{j+k+1} (p_{0}')_{0,j+k}^z - (k+1)z^{k+j}
    r_{0,k+j}^z.
  \end{align*}
  Hence, we obtain the desired conditions.
\end{proof}

\begin{corollary}
  The unique irreducible component of $\hilb^{d}(\bbA^4)$ containing
  $[I]$ has dimension
  \[
  D = D(n_1,n_2) := F(n_1,n_2) + F(n_2,n_1) +
  d(n_1,n_2) -1,
  \]
  where
  \[
  F(a,b) := \sum_{i=2}^{a-1} (i-1)\binom{b-i}{1} +
  (a-1)\binom{b-a+1}{2} + (a+1)(a+b-1) + \binom{b-1}{2},
  \]
  $d = d(n_1,n_2) = \frac{n_1n_2}{2}(n_1+n_2)$, and $\binom{j}{k}$
  denotes $\frac{j!}{k!(j-k)!}$ if $j \ge k \ge 0$ and is $0$
  otherwise.
\end{corollary}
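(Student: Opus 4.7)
By Proposition~\ref{prop:IisSmooth}, the point $[I]$ is smooth on $\hilb^d(\bbA^4)$, so the dimension of the unique irreducible component through $[I]$ coincides with $\dim_\kk \Hom_S(I, S/I)$. The plan is therefore to enumerate a basis of independent parameters for the tangent space using the structural results already established.

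First I would recall that a tangent vector $\varphi$ is determined by the values $p_k := \varphi(x^{n_1-k}y^k)$ for $0 \le k \le n_1$, $\tilde p_\ell := \varphi(z^{n_2-\ell}w^\ell)$ for $0 \le \ell \le n_2$, and $r := \varphi(q)$, subject to the syzygies \eqref{syz:xy}--\eqref{syz:qzw}. Applying Corollary~\ref{cor:mindingPsAndQsInGeneral} with $n = n_1$ parametrizes every solution of \eqref{syz:xy} by a choice of $p_k' \in \Ann_{x,y}$ together with polynomials $t_0, t_1, \ldots, t_{n_1}$ satisfying explicit bidegree bounds; by bidegree inspection, $t_0$ and $t_1$ are forced to vanish, while for $2 \le i \le n_1-1$ the polynomial $t_i$ is bivariate in $x,z$ with $(i-1)(n_2-i)$ monomial parameters (vanishing once $i \ge n_2$), and $t_{n_1}$, trivariate in $x,z,w$, contributes $(n_1-1)\binom{n_2-n_1+1}{2}$ further parameters when $n_2 > n_1$. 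This accounts for the first two summands of $F(n_1,n_2)$, and the symmetric analysis on the $(z,w)$-side (solving \eqref{syz:zw}) accounts for the first two summands of $F(n_2,n_1)$.

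Next I would invoke Proposition~\ref{prop:grid} (and the symmetric statement coming from \eqref{syz:qzw}) to eliminate the $p_k'$ and $\tilde p_\ell'$ in favour of $r$, $p_0'$ and $\tilde p_0'$. Specifically, the three families of equations $(\star^z)$ express every $(p_k')_{i,0}^z$ and every $(p_k')_{0,j-1}^z$ for $k \ge 1$ in terms of $r_{0,j}^z$ and $(p_0')_{0,j}^z$, so the remaining freedom in the $p_k'$'s comes only from $p_0'$ (and the bivariate $(p_k')_{i,j}$ entries with both $i,j>0$, which are unconstrained by \eqref{syz:qxy}). Once these substitutions are made the residual parameters are precisely $p_0'$, the bivariate pieces of each $p_k'$, $r$, and their analogs on the $(z,w)$-side, linked to one another solely through $r$.

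Finally I would tabulate these counts bidegree by bidegree. The bivariate-piece and $p_0'$ contributions on the $(x,y)$-side combine with the count of the corresponding $(p_k')_{i,j}$ entries to give the remaining terms $(n_1+1)(n_1+n_2-1) + \binom{n_2-1}{2}$ of $F(n_1,n_2)$, and symmetrically on the other side; meanwhile $r$ ranges freely over $S/I$, contributing $d(n_1,n_2)$ parameters, and the two sides share exactly one linear compatibility on $r$ (arising from the fact that a single value of $r_{0,0}$ appears in the $(\star^z)$ systems of both sides), subtracting $1$. Summing yields $F(n_1,n_2) + F(n_2,n_1) + d(n_1,n_2) - 1 = D(n_1,n_2)$, as claimed.

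The main obstacle is the bookkeeping in the last step: the parameter counts split into many sub-ranges that behave differently depending on whether indices exceed $n_2-1$ or $n_1-1$, and several of the bidegree bounds in Corollary~\ref{cor:mindingPsAndQsInGeneral} and in Proposition~\ref{prop:grid} become vacuous in boundary cases (e.g.\ $n_1 = n_2$, $n_1 = 2$, or bidegrees near the socle $(n_1-1,n_2-1)$), which must be handled by hand to confirm that the closed-form $F(a,b)$ correctly interpolates between them. The $-1$ correction reflects the unique linear relation that couples the two symmetric parametrizations through the common value $r = \varphi(q)$.
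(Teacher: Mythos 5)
Your outline follows the same broad strategy as the paper: since $[I]$ is smooth (Proposition~\ref{prop:IisSmooth}), compute $\dim_\kk\Hom_S(I,S/I)$ by parametrizing the $p_k=\varphi(x^{n_1-k}y^k)$ via Corollary~\ref{cor:mindingPsAndQsInGeneral}, constraining the $\Ann_{x,y}$-parts via Proposition~\ref{prop:grid}, doing the same for the $q_\ell$'s, and counting $r$ separately. Your first paragraph (the $t_i$-count) matches the paper, modulo one small inaccuracy: $t_0$ and $t_1$ are not ``forced to vanish'' --- rather, the $i=0$ term is identically zero in $A$ and the $i=1$ term lies in $\Ann_{x,y}$ and is thus absorbed into $p_k'$; the effective count is nonetheless correct.

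There is, however, a genuine gap in your second and third paragraphs. You claim the equations $(\star^z)$ ``express every $(p_k')_{i,0}^z$ and every $(p_k')_{0,j-1}^z$ for $k\geq1$ in terms of $r_{0,j}^z$ and $(p_0')_{0,j}^z$,'' so that the remaining freedom ``comes only from $p_0'$ (and the bivariate $(p_k')_{i,j}$ entries with both $i,j>0$).'' Both claims fail. First, Lemma~\ref{l:decomposition-of-polynomials} gives a basis decomposition with pieces $p_{i,0}$ and $p_{0,j}$ only --- there are no $(p_k')_{i,j}$ pieces with both $i,j>0$, so this alleged source of parameters does not exist. Second, the relations $(\star^z)$ are of the form $z(p_k')_{i,0}^z = \cdots$ and $z^j(p_{k+1}')_{0,j-1}^z = \cdots$: multiplication by $z$ on the left-hand side annihilates the top $z$-coefficient, so each such polynomial is determined only up to its single leading coefficient, which remains free. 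Carrying out your plan as stated would therefore undercount the tangent space; you would lose the $n_1(n_1+n_2-1)$ parameters contributed by the leading coefficients of $p_1',\dots,p_{n_1}'$, which are exactly what (together with the $n_1+\binom{n_2}{2}$ parameters of $p_0'$) produce the remaining summands $n_1(n_1+n_2)+\binom{n_2}{2}$ of $F(n_1,n_2)$ --- an expression you would also need to verify equals the displayed $(n_1+1)(n_1+n_2-1)+\binom{n_2-1}{2}$. Finally, the $-1$ is not a ``shared linear compatibility between the two symmetric parametrizations,'' but the single constraint, imposed by $(\star^z)$ with $k=i$, that $r$ have trivial constant term (which is forced since $I$ has trivial negative tangents and $\deg q=2$); the symmetric system merely reproduces this same constraint. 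In short: the approach is the right one, but the crucial observation that each $(\star^z)$-relation leaves the leading coefficient free is missing, and without it the count cannot be derived as you describe.
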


\begin{proof}
  We shall compute the dimension of the tangent space $\Hom_S(I, A)$
  at the smooth point $[I]$ rather directly.  A homomorphism $\varphi
  \colon I \to A$ is determined by its values
  \[
  r = \varphi(xz-yw), p_k = \varphi(x^{n_1-k}y^k), q_{\ell} =
  \varphi(z^{n_2-\ell}w^{\ell}), \text{ for } 0 \le k \le n_1 \text{
    and } 0 \le \ell \le n_2.
  \]
  There are four kinds of relations that put restrictions on
  coefficients, namely:
  \begin{enumerate}
  \item relations among $p_0, p_1, \dotsc, p_{n_1}$ described in
    Corollary \ref{cor:mindingPsAndQsInGeneral};
  \item relations among $q_0, q_1, \dotsc, q_{n_2}$ described in
    Corollary \ref{cor:mindingPsAndQsInGeneral}  with $n_1$ and $n_2$ swapped;
  \item relations among $p_0, p_1, \dotsc, p_{n_1}$ and $r$ described
    in Proposition \ref{prop:grid};
  \item relations among $q_0, q_1, \dotsc, q_{n_2}$ and $r$ described
    in Proposition \ref{prop:grid} with $n_1$ and $n_2$ swapped.
  \end{enumerate}
  Moreover, these conditions are independent, in the sense that (i)
  only restricts the coefficients
  %of the $t_i$'s
  of the $p_k-p'_k$, whereas (iii) only restricts the coefficients of
  the $p_k'$'s and uses the coefficients of $r$ as parameters
  (ensuring (iii) and (iv) are independent).
  
  Starting with (i), we apply Corollary
  \ref{cor:mindingPsAndQsInGeneral} to the sequence $p_0, p_1, \dotsc,
  p_{n_1}$ to show
  \begin{align*}
    p_k &= p_k' + \sum_{i=0}^{k} x^{n_1-k}y^{k-i}z^it_i +
    \sum_{i=k+1}^{n_1} x^{n_1-i}z^kw^{i-k}t_i \\ &= p_k' +
    \sum_{i=2}^{k} x^{n_1-k}y^{k-i}z^it_i + \sum_{i=k+1}^{n_1}
    x^{n_1-i}z^kw^{i-k}t_i,
  \end{align*}
  where $p_k'$ is the part of $p_k$ annihilated by $x$ (equivalently
  $y$); the second equality follows as $i=0$ implies $x^{n_1-k}y^{k-0}
  = 0$ and $i=1$ gives either the term $x^{n_1-1}z^0w^{1-0}t_1 \in
  \Ann(x)$ for $k=0$ or the term $x^{n_1-k}y^{k-1}z^1t_1 \in \Ann(x)$
  for $k > 0$.  Observe that the term containing $t_i$ also has a
  monomial of bidegree $(n_1-i,i)$.  Corollary
  \ref{cor:mindingPsAndQsInGeneral} states that $t_i$ is a polynomial
  in $x,z$ for $2 \le i < n_1$, so the $x$-degree satisfies
  $\deg_{x}(t_i) < i-1$ (a term of $x$-degree $i-1$ would produce an
  element of $\Ann(x)$) and the $z$-degree satisfies $\deg_z(t_i) <
  n_2-i$.  These imply that $t_i$ has $(i-1)(n_2-i)$ free
  coefficients, if $i < n_2$, and $0$ free coefficients, if $i \ge
  n_2$; we denote this number by $(i-1)\binom{n_2-i}{1}$.  Similarly,
  $t_{n_1}$ is a polynomial in $x,z,w$ and $\deg_x(t_{n_1}) < n_1-1$
  and $\deg_{z,w}(t_{n_1}) < n_2-n_1$.  This gives $(n_1-1)
  \binom{2+n_2-n_1-1}{2} = (n_1-1) \binom{n_2-n_1+1}{2}$ free
  coefficients.  Thus, a formula for the contribution of $t_2, \dotsc,
  t_{n_1}$ to the dimension is
  \[  \tag{$t_i$-count} \label{ticount}
  \sum_{i=2}^{n_1-1} (i-1)\binom{n_2-i}{1} +
  (n_1-1)\binom{n_2-n_1+1}{2}.
  \]
  The analogous count using (ii) is obtained by swapping $n_1$ and
  $n_2$.

  For (iii), we use Proposition \ref{prop:grid} to find free
  parameters in each $p_k'$.  The first condition of (\ref{starz})
  says that the coefficients $f_0, f_1, \dotsc, f_{n_2-2}$ of $f_0 +
  f_1z + \dotsb + f_{n_2-1}z^{n_2-1} := (p_k')_{i,0}^z$ are determined
  by those of $r_{0,k-i}^z$; only the coefficient $f_{n_2-1}$ is free,
  so $(p_k')_{i,0}^z$ contributes exactly one degree of freedom.
  (When $k < i$, we have $r_{0,k-i}^z := 0$.  When $k = i$, we get
  $z(p_k')_{k,0}^z = (n_1-k)r_{0,0}^z$, further implying $r_{0,0}^z$
  (and in turn $r$) has trivial constant term---this is expected as
  $I$ has trivial negative tangents and $\deg q = 2$.)  The second
  condition of (\ref{starz}) says that the coefficients $f_0, f_1,
  \dotsc, f_{n_2-2}$ of $f_0 + f_1z + \dotsb + f_{n_2-1}z^{n_2-1} :=
  (p_{k+1}')_{i+1,0}^z$ are determined by those of $(p_0')_{0,k-i}^z$
  and $r_{0,k-i}^z$; the coefficient $f_{n_2-1}$ is free.  The third
  condition of (\ref{starz}) says that the coefficients $f_0, f_1,
  \dotsc, f_{n_2-j-1}$ of $f_0 + f_1z + \dotsb + f_{n_2-j}z^{n_2-j} :=
  (p_{k+1}')_{0,j-1}^z$ are determined by those of $(p_0')_{0,k+j}^z$
  and $r_{0,k+j}^z$; the coefficient $f_{n_2-j}$ is free.  In other
  words, letting $0 < \kappa \le n_1$, every term in the following
  expansion contributes a single degree of freedom:
  \begin{align*}
    p_{\kappa}' &= \sum_{0<\iota<n_1}
    y^{\iota}x^{n_1-1-\iota}(p_{\kappa}')_{\iota,0}^z +
    x^{n_1-1}(p_{\kappa}')_{0,0}^z + \sum_{0<\eta<n_2}
    w^{\eta}x^{n_1-1}(p_{\kappa}')_{0,\eta}^z, \\ &=
    \sum_{0<\iota<\kappa}
    y^{\iota}x^{n_1-1-\iota}(p_{\kappa}')_{\iota,0}^z +
    \sum_{\kappa\le \iota<n_1}
    y^{\iota}x^{n_1-1-\iota}(p_{\kappa}')_{\iota,0}^z
    \\ &\qquad\qquad\qquad\qquad + x^{n_1-1}(p_{\kappa}')_{0,0}^z +
    \sum_{0<\eta<n_2} w^{\eta}x^{n_1-1}(p_{\kappa}')_{0,\eta}^z, \\ &=
    \sum_{0\le i<k} y^{i+1}x^{n_1-2-i}(p_{k+1}')_{i+1,0}^z +
    \sum_{\kappa\le \iota<n_1}
    y^{\iota}x^{n_1-1-\iota}(p_{\kappa}')_{\iota,0}^z
    \\ &\qquad\qquad\qquad\qquad + x^{n_1-1}(p_{k+1}')_{0,0}^z +
    \sum_{1<j<n_2+1} w^{j-1}x^{n_1-1}(p_{k+1}')_{0,j-1}^z,
  \end{align*}
  where $k = \kappa-1$, $i = \iota-1$, and $j = \eta +1$; the only
  term here not covered by (\ref{starz}) is $(p_{k+1}')_{0,n_2-1}^z$,
  which is therefore a free constant.  As each term contributes one
  degree of freedom, the contribution by $p_1', p_2', \dotsc,
  p_{n_1}'$ equals $n_1(n_1+n_2-1)$.  Now considering
  \begin{align*}
    p_{0}' &= \sum_{0<i<n_1} y^{i}x^{n_1-1-i}(p_{0}')_{i,0}^z +
    x^{n_1-1}(p_{0}')_{0,0}^z + \sum_{0<j<n_2}
    w^{j}x^{n_1-1}(p_{0}')_{0,j}^z,
  \end{align*}
  we find that each $(p_{0}')_{i,0}^z$ contributes one degree of
  freedom, for $i \ge 0$, giving $n_1$.  Each $(p_{0}')_{0,j}^z$ is
  free and contributes $n_2-j$ degrees of freedom, for $0 < j < n_2$,
  giving $\sum_{j=1}^{n_2-1} n_2-j = \binom{n_2}{2}$.  Thus, a formula
  for the contribution to the dimension by $p_0', p_1', \dotsc,
  p_{n_1}'$ is
  \[ \tag{$p_k'$-count} \label{pk'count}
  %% n_1(n_1 + n_2-1) + (n_1 + n_2 -1 + 1 + 2 + \dotsb + n_2-2) =
  %% (n_1+1)(n_1+n_2-1) + \binom{n_2-1}{2}.
  n_1(n_1 + n_2) + \binom{n_2}{2}.
  \]
  The analogous count for (iv) is obtained by swapping $n_1$ and
  $n_2$, and neither (iii) nor (iv) restricts the coefficients of $r$.

  The total contribution by $p_0, p_1, \dotsc, p_{n_1}$ to the
  dimension is therefore the sum of (\ref{ticount}) and
  (\ref{pk'count}), which equals
  \[
  F(n_1,n_2) := \sum_{i=2}^{n_1-1} (i-1)\binom{n_2-i}{1} +
  (n_1-1)\binom{n_2-n_1+1}{2} + n_1(n_1+n_2) + \binom{n_2}{2}.
  \]
  Symmetrically, the contribution by $q_0, q_1, \dotsc, q_{n_2}$ is
  $F(n_2,n_1)$.  To finish, we need the contribution by $r$.  The only
  condition on $r$, imposed by (\ref{starz}) when $k=i$, is of having
  a trivial constant term.  Thus, $r$ contributes $\dim_\kk(S/I)-1$
  dimensions.
  
  Hence, combining with the dimension formula in Lemma
  \ref{l:decomposition-of-polynomials}, we see
  \[
  D = \dim_{\kk} \Hom_S(I,A) = F(n_1,n_2) + F(n_2,n_1) +
  \frac{n_1n_2}{2}(n_1+n_2) -1,
  \]
  as desired.
\end{proof}

\begin{corollary}
  \label{cor:verySmall}
  The dimension $D$ simplifies to
  \[
  D = \frac{1}{3}m^3 + mM^2 + m^2 + 2mM + M^2 -
  \frac{1}{3}m -1,
  \]
  where $m := \min\{n_1,n_2\}$ and $M := \max\{n_1,n_2\}$.  In
  particular, the dimension $D$ of the irreducible component of
  $\hilb^{d}(\bbA^4)$ containing the point $[I]$ satisfies $D<4d$, and
  moreover if $(m,M) \notin \{(2,2), (2,3), (2,4)\}$, then $D<3(d-1)$.
\end{corollary}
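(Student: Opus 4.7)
The plan is to carry out the proof in three stages, all by direct algebraic manipulation. First, I would simplify $D = F(n_1,n_2) + F(n_2,n_1) + d - 1$ to the claimed closed form. By the convention that $\binom{j}{k} = 0$ when $j < k$, the sum $\sum_{i=2}^{a-1}(i-1)\binom{b-i}{1}$ truncates at $i = \min(a-1,b-1)$, and the binomial $\binom{b-a+1}{2}$ vanishes whenever $b \le a$. Writing $m = n_1$ and $M = n_2$ after assuming $n_1 \le n_2$ (the other case being symmetric), the term $F(M,m)$ reduces to $\binom{m}{3} + M(m+M) + \binom{m}{2}$, while $F(m,M)$ retains the off-diagonal term $(m-1)\binom{M-m+1}{2}$. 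The two remaining sums collapse via $\sum_{j=1}^{r} j = \binom{r+1}{2}$ and $\sum_{j=1}^{r} j^2 = \tfrac{r(r+1)(2r+1)}{6}$ into polynomials in $m,M$, and adding $d - 1 = \tfrac{mM(m+M)}{2} - 1$ and collecting terms yields the claimed expression.

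Second, for the inequality $D < 4d$ on all $m,M \ge 2$, I would rewrite
\[
4d - D = (m-1)M^2 + 2m(m-1)M + 1 - m^2 - \tfrac{1}{3}m(m-1)(m+1)
\]
as a quadratic in $M$ with positive leading coefficient (since $m \ge 2$) whose derivative $2(m-1)(M+m)$ is strictly positive on $[m,\infty)$. It therefore suffices to check positivity at $M = m$, where the expression reduces to $\tfrac{m(m-1)(8m-1)}{3} - m^2 + 1$. This evaluates to $7$ at $m = 2$ and grows cubically in $m$, so it is strictly positive for every $m \ge 2$, giving $D < 4d$ throughout.

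Third, the inequality $D < 3(d-1)$ follows from the same quadratic analysis applied to
\[
3(d-1) - D = \tfrac{m-2}{2}M^2 + \tfrac{m(3m-4)}{2}M - \tfrac{m^3}{3} - m^2 + \tfrac{m}{3} - 2.
\]
For $m \ge 3$, the leading coefficient is strictly positive, the derivative in $M$ is positive on $[m,\infty)$, and the value at $M = m$ equals $\tfrac{5m^3 - 12m^2 + m - 6}{3}$, which is $8$ at $m = 3$ and increasing in $m$, so the inequality holds on the whole region. For $m = 2$ the quadratic degenerates to the linear function $2M - 8$, which is negative at $M \in \{2,3\}$, zero at $M = 4$, and strictly positive for $M \ge 5$, matching exactly the three excluded cases in the statement. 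The main obstacle is keeping the binomial bookkeeping straight in the first stage, where one must carefully handle the asymmetry between the two halves of the sum and the vanishing binomials for out-of-range arguments; once $D$ is in polynomial form, the two inequalities are entirely mechanical.
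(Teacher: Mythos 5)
Your proposal is correct and follows essentially the same route as the paper: both simplify $D = F(n_1,n_2)+F(n_2,n_1)+d-1$ (with the sums collapsing to $\binom{a-1}{2}(b-\tfrac{2}{3}a)$ and $\binom{a}{3}$ and the cross term $(b-1)\binom{a-b+1}{2}$ vanishing when $a\le b$) and then treat the two inequalities as quadratics in $M$. The only differences are in execution: you argue positivity via monotonicity plus the boundary value at $M=m$ rather than the paper's explicit root/discriminant computation, and you establish $D<4d$ unconditionally by the same method, whereas the paper deduces it from $D<3(d-1)$ for most $(m,M)$ and checks the three exceptional pairs by hand — both of which are minor streamlinings of the same argument.
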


\begin{proof}
  Let $a=n_1, b=n_2$ and assume $a \le b$ without loss of generality.
  We first simplify the summations in $F(a,b)$ and $F(b,a)$ coming
  from (\ref{ticount}).  For $F(a,b)$ we have
  \begin{align*}
  \sum_{i=2}^{a-1} (i-1)\binom{b-i}{1} &= \sum_{i=2}^{a-1} (i-1)(b-i)
  = \sum_{i=1}^{a-2} i(b-1-i) = (b-1)\sum_{i=1}^{a-2} i -
  \sum_{i=1}^{a-2} i^2 \\ &= (b-1)\binom{a-1}{2} -
  \binom{a-1}{2}\frac{2(a-2)+1}{3} =
  %% \binom{a-1}{2} \left( b-1 - \frac{2}{3}a +1 \right) \\ &=
  \binom{a-1}{2} \left( b - \frac{2}{3}a \right)
  \end{align*}
  so that
  \[
  F(a,b) = \binom{a-1}{2} \left( b - \frac{2}{3}a \right) +
  (a-1)\binom{b-a+1}{2} + a(a+b) + \binom{b}{2}.
  \]
  For $F(b,a)$ we have
  \begin{align*}
    \sum_{i=2}^{b-1} (i-1)\binom{a-i}{1} &= \sum_{i=2}^{a-1}
    (i-1)(a-i) = \sum_{i=1}^{a-2} i(a-1-i) = (a-1)\sum_{i=1}^{a-2} i -
    \sum_{i=1}^{a-2} i^2 \\ &= (a-1)\binom{a-1}{2} -
    \binom{a-1}{2}\frac{2a-3}{3} = \binom{a-1}{2} \frac{1}{3}a
  \end{align*}
  so that
  \[
  F(b,a) = \binom{a-1}{2} \frac{1}{3}a + 0 + b(a+b) + \binom{a}{2},
  \text{ as } (b-1)\binom{a-b+1}{2} = 0.
  \]
  %%  to get (\ref{ticount}).
  Therefore, we find that
  \begin{align*}
    D &= F(a,b) + F(b,a) + d(a,b) -1 \\ &= \binom{a-1}{2} \left( b -
    \frac{1}{3}a \right) + (a-1)\binom{b-a+1}{2} + (a+b)^2 +
    \binom{a}{2} + \binom{b}{2} + \frac{ab(a+b)}{2} -1,
  \end{align*}
  which gives the desired expression $\frac{1}{3}a^3 + ab^2 + a^2 +
  2ab + b^2 - \frac{1}{3}a -1$ when expanded.

  Now we examine when $3(d-1) > D$, or rather
  \begin{align*}
    3(d-1) - D &= 3\frac{ab(a+b)}{2} -3 - \left( \frac{1}{3}a^3 +
    ab^2 + a^2 + 2ab + b^2 - \frac{1}{3}a -1 \right) \\
    &= -\frac{1}{3}a^3 +\frac{3}{2}a^2b + \frac{1}{2}ab^2 -a^2 -2ab
    -b^2 +\frac{1}{3}a -2 \\
    &= \left(\frac{1}{2}a-1\right)b^2 + \left(\frac{3}{2}a^2
    -2a\right)b -\frac{1}{3}a^3 -a^2 +\frac{1}{3}a -2 >0.
  \end{align*}
  If $a=2$, then this reduces to $2b - 8 > 0$, so the inequality is
  satisfied if and only if $b > 4$.  Now let $a > 2$.  Then $3(d-1)-D$
  is quadratic in $b$ with roots
  \[
  r_{\pm}(a) = \frac{2a-\frac{3}{2}a^2 \pm \sqrt{\left(\frac{3}{2}a^2
      -2a\right)^2 - 4\left(\frac{1}{2}a-1\right)\left(-\frac{1}{3}a^3
      -a^2 +\frac{1}{3}a -2 \right)}}{a-2}
  \]
  and discriminant simplifying to $\frac{35}{12}a^4 -\frac{16}{3}a^3
  -\frac{2}{3}a^2 +\frac{16}{3}a -8$; the discriminant is positive for
  $a>2$.  If both roots satisfy $r_{\pm}(a) < a$, then $a \le b$
  implies $3(d-1) - D > 0$, as the $b^2$-term in the expression for
  $3(d-1) - D$ has a positive coefficient.  We check
  \begin{align*}
    a > r_{\pm}(a) &\Longleftrightarrow a^2-2a > 2a-\frac{3}{2}a^2 \pm
    \sqrt{\frac{35}{12}a^4 -\frac{16}{3}a^3 -\frac{2}{3}a^2
      +\frac{16}{3}a -8} \\
    &\Longleftrightarrow \left(\frac{5}{2}a^2 -4a\right)^2 >
    \frac{35}{12}a^4 -\frac{16}{3}a^3 -\frac{2}{3}a^2 +\frac{16}{3}a
    -8 \\
    &\Longleftrightarrow \frac{10}{3}a^4 -\frac{44}{3}a^3
    +\frac{50}{3}a^2 -\frac{16}{3}a +8 > 0.
  \end{align*}
  The latter factors as $\frac{2}{3}(a-2)(5a^3 -12a^2 +a -6)$ and is
  positive for $a\ge 3$.  Hence, the point $[I] \in \hilb^d(\bbA^4)$
  lies on a component of dimension $D < 3(d-1)$.

  When $(a,b) \in \{(2,2), (2,3), (2,4)\}$, we immediately find
  $D(a,b) < 4d(a,b)$.  Hence, here the point $[I] \in \hilb^d(\bbA^4)$
  lies on a component of dimension $D < 4d$.
\end{proof}

\begin{remark}
  %% A straightforward calculation shows that when $n_1 = n_2 = n$, the
  %% dimension of the component of the Hilbert scheme containing $[I]$
  %% equals $\frac{4}{3}n^3 + 4n^2 - \frac{1}{3}n -1$.
  When $n_1 = n_2 = n$, we obtain $D = \frac{4}{3}n^3 + 4n^2 -
  \frac{1}{3}n -1$ and $4d = 4n^3$.
\end{remark}

\section{Compendium of Elementary Components}

%% The results of \S\S\ref{sec:tnt1}--\ref{sec:obs2} allow us to
%% demonstrate the existence of infinite families of small elementary
%% components.
Collecting the results from \S\S\ref{sec:tnt1}--\ref{sec:dimOfComps},
we prove the theorems from the introduction.

%% \begin{theorem}
%%   The point $[I] \in \hilb^{pts}(\bbA^d)$ corresponding to the ideal
%%   \[
%%   I := \llrr{x,y}^{n_1} + \llrr{z,w}^{n_2} + \llrr{xz-yw}, \text{ for
%%   } n_1, n_2 \ge 2,
%%   \]
%%   in $S := \kk[x,y,z,w]$ is nonsingular and belongs to an elementary
%%   component of the Hilbert scheme.
%% \end{theorem}

\begin{proof}[Proof of Theorems \ref{thm:intro}, and \ref{thm:intro2}]
Let $I$ be as in Theorem
  \ref{thm:intro}. Proposition \ref{prop:Negative-tangents-for-I}
  shows that every irreducible component containing $[I]$ is
  elementary. Proposition \ref{prop:IisSmooth} then shows that $[I]$
  is a smooth point, so it must lie on a unique irreducible
  component. The formula for $d=\dim_\kk S/I$ is given in Lemma
  \ref{l:decomposition-of-polynomials}. Lastly, the formula for the
  dimension $D$ of the irreducible component containing $[I]$ is given
  in Corollary \ref{cor:verySmall}, where it is also shown that $D<4d$
  and if $(m,M)\notin\{(2,2),(2,3),(2,4)\}$, then $D<3(d-1)$. This
  proves Theorem \ref{thm:intro}.

  Finally, let $J$ be as in Theorem \ref{thm:intro2}. Corollary
  \ref{cor:tnt2} shows that $J$ has trivial negative tangents, so
  every component containing it is elementary by \cite[Theorem
    1.2]{Jelisiejew--2019}. Corollary \ref{cor:obs2} then proves that
  $[J]$ is a smooth point, so it must lie on a unique irreducible
  component.  Lastly, Lemma \ref{lem:socB} shows that the condition on
  the socle is automatic when $r=1$.  This proves Theorem
  \ref{thm:intro2}.
\end{proof}

\begin{proof}[Proof of Theorem \ref{thm:main-result}]
  Let $I$, $d$, and $D$ be as in Theorem \ref{thm:intro}. By Theorem \ref{thm:Jel}, there is an open subset $U\subset X:=\hilb^+_{d}(\bbA^4) \times \bbA^4$ such that $\theta|_U\colon U\to \hilb^d(\bbA^4)$ is an open immersion with $\theta|_U\bigl(([I],0)\bigr)=[I]$. Consider the cartesian diagram
  \[
  \xymatrix{
  U\ar[r] & X\ar[r]^-{\theta} & \hilb^d(\bbA^4)\\
  V\ar[r]\ar@{^{(}->}[u] & Z\ar[r]\ar@{^{(}->}[u] & \mathcal{Z}\ar@{^{(}->}[u]
  }
  \]
  where $\mathcal{Z}$ is the locus of points $[J]$ supported at the origin. By Theorem \ref{thm:intro}, we know $[I]$ is a smooth point contained on a unique elementary component, so shrinking $U$ if necessary, we may assume $U$ is smooth, irreducible, and of dimension $D$.
  
  We show that every irreducible component of $V$ containing $([I],0)$ has dimension at most $D-4$. For the purposes of computing dimensions, it suffices to replace $V$ by its reduction $V_{\textrm{red}}$. Since $V\subset Z$ is open, $V_{\textrm{red}}=V\times_Z Z_{\textrm{red}}$ and so we may replace $Z$ by any closed subscheme of $X$ whose reduction agrees with $Z_{\textrm{red}}$. By definition of $\theta$, the $\kk$-points of $Z$ are precisely those of $X_0:=\hilb^+_{d}(\bbA^4) \times \{0\}$. Thus, $X_0$ and $Z$ have the same reduction. We have therefore reduced to showing that $\dim(U\cap X_0)\leq \dim(U)-4$.
  
  Since $U\subset X$ is open and $X=\hilb^+_{d}(\bbA^4)\times\bbA^4 \to \bbA^4$ is flat, we have a flat map $f\colon U\to\bbA^4$. Then $U\cap X_0$ is the fibre of $f$ over $0$. As $U$ and $\bbA^4$ are smooth irreducible $\kk$-schemes, we see $\dim(U\cap X_0)= \dim(U)-4=D-4$.
  
  Hence, this shows that every irreducible component of $\mathcal{Z}$ containing $[I]$ has dimension at most $D-4$. Theorem \ref{thm:intro} tells us $D<3(d-1)$ for $(m,M)\notin\{(2,2),(2,3),(2,4)\}$ and one verifies $D-4 < 3(d-1)$ when $(m,M) \in \{(2,3),(2,4)\}$.
\end{proof}

%Let us demonstrate the existence of infinite families of small
%elementary components.
We end the paper by discussing some examples and variations of
Theorems \ref{thm:intro} and \ref{thm:intro2}.  For a survey of the
known elementary components prior to our work, see \cite[Remark
  6.10]{Jelisiejew--2019} and the section on smoothability in
\cite[Appendix B]{Abdallah--Emsalem--Iarrobino--2021}.

\begin{example}
  \label{eg:1}
  Plugging in some values of $(n_1,n_2)$ to define $I$, we obtain the
  following:
  \begin{itemize}
  \item for $(n_1,n_2) = (2,2)$, the point $[I] \in \hilb^8(\bbA^4)$
    lies on the $25$-dimensional component first discovered by
    Iarrobino--Emsalem \cite[\S 2.2]{Iarrobino--Emsalem--1978};
  \item for $(n_1,n_2) = (2,3)$, the point $[I] \in
    \hilb^{15}(\bbA^4)$ is smooth on a $44$-dimensional component;
  \item for $(n_1,n_2) = (2,4)$, the point $[I] \in \hilb^{24}(\bbA^4)$ is
    smooth on a $69$-dimensional component;
  \item for $(n_1,n_2) = (2,5)$, the point $[I] \in
    \hilb^{35}(\bbA^4)$ is smooth on a $100$-dimensional component;
  \item for $(n_1,n_2) = (3,3)$, the point $[I] \in \hilb^{27}(\bbA^4)$ is
    smooth on a $70$-dimensional component;
  \item for $(n_1,n_2) = (3,4)$, the point $[I] \in
    \hilb^{42}(\bbA^4)$ is smooth on a $104$-dimensional component.
  \end{itemize}
  The scheme $\hilb^{35}(\bbA^4)$ is already known to have a
  $124$-dimensional elementary component, denoted $\mathcal{Z}(3)$ in
  \cite{Jelisiejew--2019}.
  %% , containing the smooth point defined by the ideal $Z =
  %% \llrr{x,y}^3 + \llrr{z,w}^3 + \llrr{x^2z^2 + xyzw + y^2w^2}$.
  Thus, this Hilbert scheme has at least two elementary components.
\end{example}

\begin{example}
  \label{eg:2}
  %% In favourable situations, the circumstances leading to Propositions
  %% \ref{prop:tnt2} and \ref{prop:obs2} iterate.
  Consider the following example where Theorem \ref{thm:intro2} holds.
  Define $I$ using $(n_1, n_2) = (3,3)$ and set
  %% $I_0 := I$, $I_1 := I + \llrr{x^2z^2}$, $I_2 := I_1 +
  %% \llrr{x^2w^2}$, and $I_3 := I_2 + \llrr{y^2z^2}$.
  $s_1 := x^2z^2$, $s_2 := x^2w^2$, and $s_3 := y^2z^2$.  For $0\leq
  i\leq 3$, let $I^{(i)} := I + \llrr{s_1, \dotsc, s_i}$ and
  $A^{(i)}=S/I^{(i)}$.  One can verify that
  \begin{itemize}
    \item the point $[I^{(0)}] \in \hilb^{27}(\bbA^4)$ is smooth on a
      $70$-dimensional component (as above);
    \item the point $[I^{(1)}] \in \hilb^{26}(\bbA^4)$ is smooth on a
      $77$-dimensional component;
    \item the point $[I^{(2)}] \in \hilb^{25}(\bbA^4)$ is smooth on an
      $82$-dimensional component;
    \item the point $[I^{(3)}] \in \hilb^{24}(\bbA^4)$ is smooth on an
      $85$-dimensional component.
  \end{itemize}
  That is, each $[I^{(i)}] \in \hilb^{27-i}(\bbA^4)$ is a smooth point
  on an elementary component of dimension less than that of the main
  component, namely $4(27-i)$.  Furthermore, we see that
  $\hilb^{24}(\bbA^4)$ has at least two elementary components, by
  comparing with Example \ref{eg:1}.
  %% An example where the third condition of Theorem \ref{thm:intro2}
  %% is lacking and the theorem fails is given by $(n_1,n_2) = (3,3)$,
  %% $s_1 = x^2z^2$, and $s_2 = x^2zw$.  In this case, $\Soc A^{(2)}$
  %% contains $x^2z +I^{(1)}$ and $xzw +I^{(1)}$, so $\Soc A^{(2)}
  %% \neq A^{(2)}_{(2,2)}$.
\end{example}

\begin{example}
  \label{eg:2'}
  Further examples similar to Example \ref{eg:2} can be found.  For
  instance, the ideals
  \begin{align*}
    I &= \llrr{x,y}^4 + \llrr{z,w}^4 + \llrr{xz-yw, xy^2w^3, x^3w^3,
      y^3zw^2, y^3z^3} \quad\text{and} \\
    I &= \llrr{x,y}^3 + \llrr{z,w}^5 + \llrr{xz-yw}
  \end{align*}
  lie on distinct elementary components of $\hilb^{60}(\bbA^4)$ of
  respective dimensions $179$ and $146$.
\end{example}

\begin{example}
  The point $[I] \in \hilb^{1000}(\bbA^4)$ defined by
  \[
  I = \llrr{x,y}^{10} + \llrr{z,w}^{10} + \llrr{xz-yw}
  \]
  lies on an elementary component of dimension $1729$.  The
  dimension $1729$ is the second \emph{taxicab number}, i.e.\ the
  minimal positive integer expressible as a sum of two distinct
  cubes in two different ways: $1729 = 9^3+10^3 = 1^3+12^3$.
\end{example}

\begin{example}
  \label{eg:3}
  As mentioned in the introduction (see Question \ref{q:cactus}),
  producing a local, zero-dimensional Gorenstein quotient of $S$ with
  trivial negative tangents gives a way to distinguish cactus and
  secant varieties, see \cite[Proposition
    7.4]{Buczynski--Jelisiejew--2017}. Our techniques allow us to produce 
    an example with socle-dimension $2$ (as opposed to
  socle-dimension $1$).
     
  Let us return to the setting of Example \ref{eg:2}.  Letting $(n_1,
  n_2) = (3,3)$, we find that
  \[
  \Soc A =
  \llrr{x^2z^2 +I, x^2zw +I, x^2w^2 +I, xyz^2 +I, y^2z^2 +I}
  \]
  is $5$-dimensional.
  %% xyz^2 +I^{(2)}}$$ is $2$-dimensional, i.e.~among non-Gorenstein
  %% local artinian quotients of $S$, $A^{(3)}$ has minimal possible
  %% socle-dimension, along with trivial negative tangents and
  %% vanishing nonnegative obstruction space.  The question of the
  %% existence of \emph{Gorenstein} local artinian quotients of $S$
  %% with trivial negative tangents, or the minimal socle-dimension of
  %% a non-Gorenstein example, has been raised by Jelisiejew in
  %% relation with the study of secant and cactus varieties. \matt{did
  %% Joachim ask what the socle dimension of a non-Gorenstein example
  %% is? If so, we should put this in the intro and say we answered
  %% the question.}  \andrew{I just forwarded you the emails; I don't
  %% know exactly what he said in the talks, but he did ask me in the
  %% email how far we could go in terms of lowest socle dimension.}
  Then, $$\Soc A^{(3)} = \llrr{x^2zw +I^{(2)}, xyz^2 +I^{(2)}}$$ is
  $2$-dimensional. By Theorem \ref{thm:intro2} and Remark
  \ref{rmk:more-details-intro2}, $A^{(3)}$ has trivial negative
  tangents and vanishing nonnegative obstruction space.
\end{example}

\begin{remark}
  \label{rmk:variants}
  Natural variants of the ideals in Theorem \ref{thm:intro} also
  produce trivial negative tangents.  For instance, Table \ref{table}
  displays some triples $(n_1,n_2,n_3)$ that determine ideals
  \[
  J := \llrr{x,y}^{n_1} + \llrr{z,w}^{n_2} + \llrr{xz-yw, (xz)^{n_3}}
  \]
  with trivial negative tangents, verified by direct computations in
  \emph{Macaulay2} \cite{Grayson--Stillman}.  (For any ideal, it
  suffices to check that $T^1_{A,i} = 0$ for finitely many $i<0$;
  e.g.\ if $I$ is homogeneous, then $\Hom(I,S/I)_{<-N} = 0$, where $N$
  is the highest degree of a generator of $I$.)  Letting $B := S/J$,
  one can verify that the quotient $B/\llrr{s_1,s_2,s_3,s_4}$ has
  socle-dimension $2$, for $(n_1,n_2,n_3) = (4,4,2)$ and sufficiently
  general $s_1,s_2,s_3,s_4 \in \Soc B$.
  
  Furthermore, setting $d_B := \dim_{\kk} B$, the examples in Table
  \ref{table} satisfy the inequality $\dim_{\kk} \Hom_B(J,B) < 4
  d_B$. For $(n_1,n_2,n_3) \neq (4,4,2)$, we have the stronger
  inequality
  \[
  \dim_{\kk} \Hom_B(J,B) < 3 (d_B-1).
  \]
  These examples may define singular points $[J]$, however, since the
  obstruction spaces $T^2(B/\kk,B)_{\ge 0}$ are nontrivial. Thus, the
  examples from Table \ref{table} define (possibly singular) points
  which lie exclusively on elementary components of
  $\hilb^{d_B}(\bbA^4)$ with dimensions less than that of the main
  component; moreover, with the exception of $(4,4,2)$, Table
  \ref{table} provides additional examples which answer Question
  \ref{q:very-small-dim}.
\end{remark}

\begin{table}[ht] 
  \caption{Triples defining further ideals with trivial negative
    tangents} \centering
  \begin{tabular}{|c c c c c c c|}
    \hline
    \multicolumn{7}{|c|}{$(n_1,n_2,n_3)$} \\
    \hline
    (4,4,2) & (4,5,3) &&&&& \\
    (5,5,3) & (5,6,3--4) & (5,7,4) &&&& \\
    (6,6,3--4) & (6,7,4--5) & (6,8,4--5) & (6,9,5) &&& \\
    (7,7,4--5) & (7,8,4--6) & (7,9,5--6) & (7,10,5--6) & (7,11,6) && \\
    (8,8,4--6) & (8,9,5--7) & (8,10,5--7) & (8,11,6--7) & (8,12,6--7) & (8,13,7) & \\
    (9,9,5--7) & (9,10,5--8) & (9,11,6--8) & (9,12,6--8) & (9,13,7--8) & (9,14,7--8) & (9,15,8) \\
    %% (10,10,5--8) & (10,11,6--9) & (10,12,6--9) & (10,13,7--9) & (10,14,7--9) & (10,15,8--9) & (10,16,8--9) & (10,17,9) \\
    \hline
  \end{tabular}
  \label{table}
\end{table}

\begin{remark}
  \label{rmk:nestedHil}
  Theorem \ref{thm:intro2} also allows one to produce natural points
  on the \emph{nested} Hilbert scheme, which parametrizes flags of
  ideals. Specifically, letting $I^{(i)}=I+\llrr{s_1,\dots,s_i}$, we
  see that
  \[
  [I^{(r)}\supset\dotsb\supset I^{(1)}\supset I] \in
  \hilb^{(d-r,\dots,d-1,d)}(\bbA^4).
  \]
  Preliminary investigations suggest the points $[J\supset I] \in
  \hilb^{(d-1,d)}(\bbA^4)$ are smooth.  This raises the question:
  %% \begin{question}
  %%   \label{q:nestedHilb}
  does $[I^{(r)}\supset\dotsb\supset I^{(1)}\supset I]$ always define
  a smooth point
  %% lying on a unique irreducible component
  of $\hilb^{(d-r,\dots,d-1,d)}(\bbA^4)$?
  %% \end{question}
\end{remark}

%\andrew{does Theorem \ref{thm:intro2} determine analogues of
%elementary components for flag Hilbert schemes of points?  it does
%find small components!}
%--------------------------------------------------------------------

% --------------------------------------------------------------------
% Bibliography
% --------------------------------------------------------------------
 \bibliography{elementary}{} \bibliographystyle{amsalpha}
% --------------------------------------------------------------------

\end{document}